\documentclass[twoside,11pt]{article}

\usepackage{graphicx}
\usepackage{caption}
\captionsetup{size=small,
    skip=5pt, position = bottom}
    
\usepackage{amsmath,amssymb,enumerate}
\usepackage{booktabs}
\usepackage[usenames,dvipsnames,svgnames,table]{xcolor}
\usepackage[caption=false,font=footnotesize]{subfig}
\usepackage{arydshln}
\usepackage{tabu}

\newtheorem{problem}{Problem}
\newtheorem{assumption}{Assumption}

\DeclareMathOperator{\diff}{Diff}
\DeclareMathOperator{\tr}{Tr}
\DeclareMathOperator{\Lie}{Lie}
\newcommand{\se}{\mathfrak{se}}
\newcommand{\so}{\mathfrak{so}}

\newcommand{\sumxz}{\sum_{\substack{x_i\in X\\ z_j\in Z}}}

\makeatletter
\renewcommand*\env@matrix[1][c]{\hskip -\arraycolsep
  \let\@ifnextchar\new@ifnextchar
  \array{*\c@MaxMatrixCols #1}}
\makeatother

\newcommand{\diag}{\mathop{\mathrm{diag}}}

\newcommand{\Hz}{\mathop{\mathrm{Hz}}}

\newcommand{\Fcal}{\mathcal{F}}
\newcommand{\Gcal}{\mathcal{G}}
\newcommand{\Hcal}{\mathcal{H}}
\newcommand{\Ical}{\mathcal{I}}
\newcommand{\Xcal}{\mathcal{X}}

\newcommand{\transpose}{\mathsf{T}}
\newcommand{\SO}{\mathrm{SO}}
\newcommand{\SE}{\mathrm{SE}}
\newcommand{\GL}{\mathrm{GL}}

\newcommand{\squeezeup}{\vspace{-2mm}}

\usepackage{lastpage}

\usepackage{jmlr2e}

\firstpageno{1}

\begin{document}

\jmlrheading{22}{2021}{1-\pageref{LastPage}}{12/20; Revised
8/21}{10/21}{20-1468}{William Clark, Maani Ghaffari, and Anthony Bloch}
\ShortHeadings{Nonparametric Continuous Sensor Registration}{Clark,  Ghaffari, and Bloch}

\title{Nonparametric Continuous Sensor Registration}

\author{\name William Clark \email wac76@cornell.edu  \\
       \addr Department of Mathematics\\
       Cornell University\\
       Ithaca, NY 14850, USA
       \AND
       \name Maani Ghaffari \email maanigj@umich.edu \\
       \addr Department of Naval Architecture and Marine Engineering\\
       University of Michigan\\
       Ann Arbor, MI 48109 USA
       \AND
       \name Anthony Bloch \email abloch@umich.edu \\
       \addr Department of Mathematics\\
       University of Michigan\\
       Ann Arbor, MI 48109 USA}

\editor{Sayan Mukherjee}

\maketitle

\begin{abstract}%
This paper develops a new mathematical framework that enables nonparametric joint semantic and geometric representation of continuous functions using data. The joint embedding is modeled by representing the processes in a reproducing kernel Hilbert space. The functions can be defined on arbitrary smooth manifolds where the action of a Lie group aligns them. The continuous functions allow the registration to be independent of a specific signal resolution. The framework is fully analytical with a closed-form derivation of the Riemannian gradient and Hessian. We study a more specialized but widely used case where the Lie group acts on functions isometrically. We solve the problem by maximizing the inner product between two functions defined over data, while the continuous action of the rigid body motion Lie group is captured through the integration of the flow in the corresponding Lie algebra. Low-dimensional cases are derived with numerical examples to show the generality of the proposed framework. The high-dimensional derivation for the special Euclidean group acting on the Euclidean space showcases the point cloud registration and bird's-eye view map registration abilities. An implementation of this framework for RGB-D cameras outperforms the state-of-the-art robust visual odometry and performs well in texture and structure-scarce environments.
\end{abstract}%

\begin{keywords}
Kernel methods, nonparametric representation, sensor registration, Lie~groups, Riemannian geometry, equivariant models
\end{keywords}


\section{Introduction}
We consider the problem of sensor registration, which is defined as finding the transformation between two sensors with overlapping measurements or tracking a moving sensor using its sequential measurements. This problem frequently arises in engineering domains such as point cloud registration~\citep{chen1991plane,censi2008icl,Segal-RSS-09,stoyanov2012fast,servos2017multi,sparkison-2018a,Zaganidis2018,parkison2019boosting}, visual odometry and SLAM~\citep{nister2006visual,scaramuzza2011visual,fraundorfer2012visual,strasdat2012local,kerl2013dense,engel2015lsdslam,MGhaffari-RSS-19,rosinol2019kimera}, satellite image registration~\citep{kim2003automatic,bentoutou2005automatic}, and photogrammetry~\citep{mikhail2001introduction}. In terms of applications, the above mentioned problems are fundamental to many computer vision~\citep{hartley2003multiple,szeliski2010computer} and robotics~\citep{thrun2005probabilistic,barfoot2017state} algorithms.

In this paper, we formulate a fundamentally novel nonparametric framework for sensor registration that constructs continuous functions from raw measurements; hence our approach can be considered as a direct method. The functions can be thought of as curves, surfaces, or hyper-surfaces that live in a Reproducing Kernel Hilbert Space (RKHS). For example, given an RGB-D image, a function can be constructed that maps a three-dimensional (3D) point to the RGB color space. Therefore, such functions inherently represent a joint model for semantic/appearance and geometry. Furthermore, the domain of the functions can be an arbitrary smooth manifold where the action of a Lie group is used to align them. This concept is illustrated in Figure~\ref{fig:ncsr}. The continuous functions allow the registration to be independent of a specific signal resolution and the framework is fully analytical with a closed-form derivation of the Riemannian gradient and Hessian.

\begin{figure}[t]
    \centering
    \includegraphics[width=0.6\columnwidth]{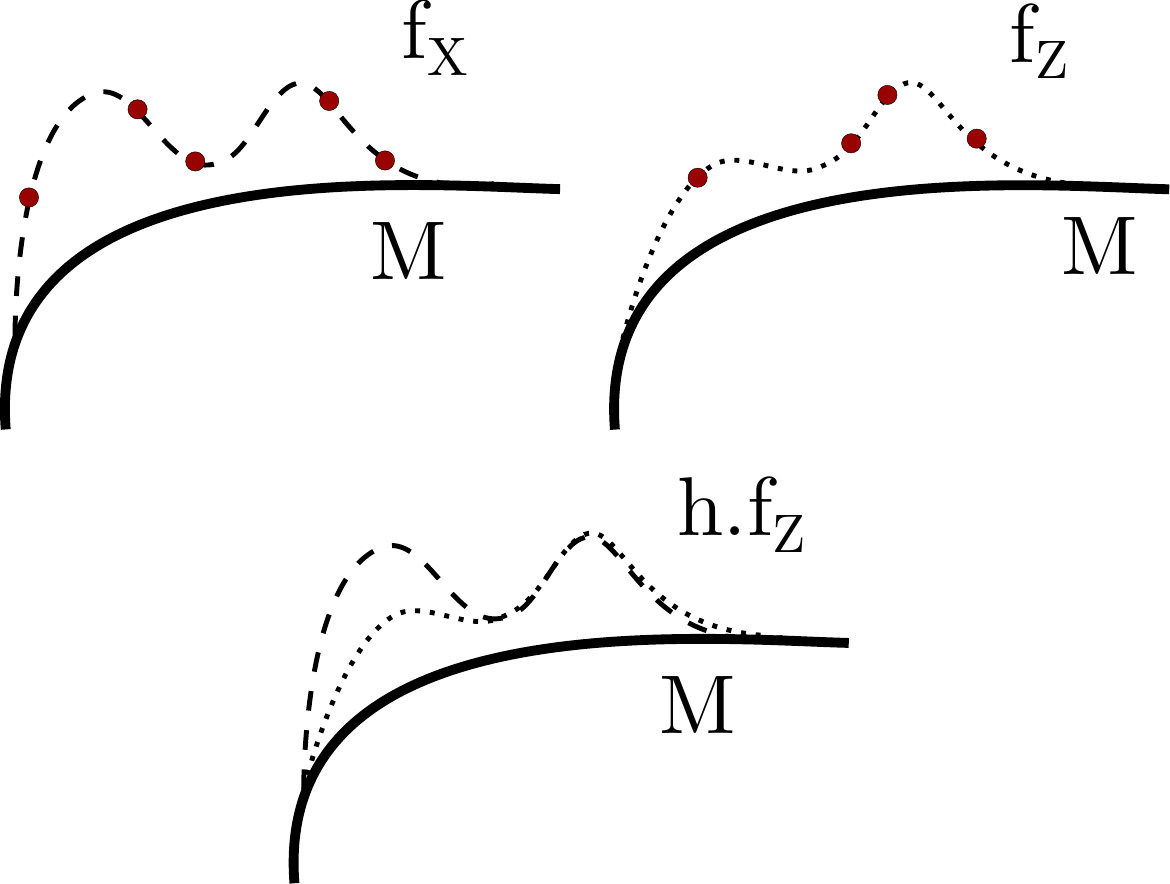}
    \caption{Conceptual illustration of nonparametric continuous sensor registration. Continuous functions are constructed from measurements shown in red circles. The domain of the functions can be an arbitrary smooth manifold, $M$, where the action of a Lie group, $\mathcal{G}$, is used to align them via $h.f_Z$ and $h\in\mathcal{G}$.}
    \label{fig:ncsr}
\end{figure}

In the present work, we study a restriction of the framework where the Lie group acts on functions isometrically. This restriction has a variety of applications and its high-dimensional derivation for the special Euclidean group acting on the Euclidean space showcases the point cloud registration and bird’s-eye view map registration abilities. In particular, this work has the following contributions:
\begin{enumerate}
    \item a new mathematical framework for sensor registration that enables nonparametric joint semantic/appearance and geometric representation of continuous functions using data;
    \item low-dimensional cases are derived with numerical examples to show the generality of the proposed framework;
    \item a high-dimensional derivation for the special Euclidean group acting on the Euclidean space showcases the point cloud registration and bird's-eye view map registration abilities. A specific derivation and implementation of this framework for RGB-D cameras performs well in texture and structure-scarce environments.
    \item the open-source implementation of the derived cases in this work is available for download at\\ \href{https://github.com/MaaniGhaffari/c-sensor-registration}{\url{https://github.com/MaaniGhaffari/c-sensor-registration}}\\
    \href{https://github.com/MaaniGhaffari/cvo-rgbd}{\url{https://github.com/MaaniGhaffari/cvo-rgbd}}
\end{enumerate}

An early idea of this work in the context of RGB-D visual odometry was presented at the 2019 Robotics: Science and Systems conference in Freiburg, Germany~\citep{MGhaffari-RSS-19}. This work generalizes the previous framework and places it into a broader context. In particular, an abstract sensor registration problem is defined in Section~\ref{sec:problem}, where low-dimensional and high-dimensional examples together with their second-order geometry on flat or curved spaces are studied. The generalization to curved spaces also connects this work to an ongoing research direction regarding positive-definite kernel design and hyperparameter learning on curved spaces~\citep{jayasumana2015kernel,feragen2015geodesic,borovitskiy2020mat}. Furthermore, the derivation of the Riemannian Hessian has important implications in quantifying the pose estimation covariance~\citep{censi2007accurate,bonnabel2016covariance,landry2019cello,brossard2020new} and symmetry of the problem by studying vector fields generated by the eigenvalues of Hessian~\citep{anonymous2021correspondencefree}, and therefore is included for completeness. However, its applications in those research areas and second-order solvers are left for future studies. Finally, the new experiments on RGB-D visual odometry in Section~\ref{sec:results} are done via an open-source and efficient C++ implementation and compared against DVO~\citep{kerl2013dense,kerl2013robust}.

The remainder of this paper is organized as follows. A review of the required mathematical machinery and notation is given in Appendix~\ref{sec:prelim}. Section~\ref{sec:problem} presents the novel continuous sensor registration framework. Section~\ref{sec:lowdim} provides low-dimensional examples of the framework including point cloud registration on circles and torus. We discuss some theoretical limitations of the kernel design on compact Riemannian manifolds in Section~\ref{sec:compact}. The high-dimensional example for the special Euclidean group is given in Section~\ref{sec:euclidean}. A brief theoretical analysis for the verification of the idea is provided in Section~\ref{sec:verification}. The integration of the flow for the special Euclidean group to obtain the solution is explained in Section~\ref{sec:senintegration}. Section~\ref{sec:RgeomSEn} deals with the Riemannian geometry of the special Euclidean groups. Experimental evaluations of the proposed method for registration and tracking using RGB-D images are presented in Section~\ref{sec:results}. Section~\ref{sec:discussion} provides discussions regarding several important topics related to the proposed approach.
Finally, Section~\ref{sec:conclusion} concludes the paper and provides future work ideas.

\section{Problem Setup}
\label{sec:problem}

Let $M$ be a smooth manifold and consider two (finite) collections of points, $X=\{x_i\}$, $Z=\{z_j\}\subset M$. Also, suppose we have a (Lie) group, $\Gcal$, acting on $M$. We want to determine which element $h\in \Gcal$ aligns ``best'' the two point clouds $X$ and $hZ = \{hz_j\}$. To assist with this, we will assume that each point contains information described by a point in an inner product space, $(\mathcal{I},\langle\cdot,\cdot\rangle_{\mathcal{I}})$. To this end, we will introduce two labeling functions, $\ell_X:X\to\Ical$ and $\ell_Z:Z\to\Ical$.

In order to measure their alignment, we will be turning the clouds, $X$ and $Z$, into functions $f_X,f_Z:M\to\Ical$ that live in some reproducing kernel Hilbert space, $(\Hcal,\langle\cdot,\cdot\rangle_{\mathcal{H}})$.
\begin{remark}
\label{rmk:hinv}
	The action, $\Gcal\curvearrowright M$ induces an action $\Gcal\curvearrowright C^\infty(M)$ by
	$$h.f(x) := f(h^{-1}x).$$
	Inspired by this observation, we will set $h.f_Z := f_{h^{-1}Z}$.
\end{remark}

In the following, we give a summary of the recipe for cloud alignment as well as formal sensor registration problems. 
\begin{definition}
    A sensor registration problem is given by a 5-tuple: $(\Gcal,M,\varphi,\langle\cdot,\cdot\rangle_{\mathfrak{g}},k)$ where
    \begin{enumerate}
        \item[(G1)] $\Gcal$ is a Lie group,
        \item[(G2)] $M$ is a smooth manifold,
        \item[(G3)] $\varphi:\Gcal\to\mathrm{Diff}(M)$ is a smooth group action,
        \item[(G4)] $\langle\cdot,\cdot\rangle_{\mathfrak{g}}$ is an inner product on $\mathfrak{g}=\mathrm{Lie}(\Gcal)$, and
        \item[(G5)] $k:M\times M\to\mathbb{R}$ is a symmetric positive definite function, called the kernel,
    \end{enumerate}
    while the information is given by a 3-tuple:
    $(\mathcal{I},(X,\ell_X),(Z,\ell_Z))$ where
    \begin{enumerate}
        \item[(I1)] $\mathcal{I}$ is an inner product space, called the information space,
        \item[(I2)] $X\subset M$ is a finite collection of points called the target and $\ell_X:X\to\mathcal{I}$ is its label, and
        \item[(I3)] $Z\subset M$ is a finite collection of points called the source and $\ell_Z:Z\to\mathcal{I}$ is its label.
    \end{enumerate}
\end{definition}
The information (G1)-(G5) is required to build the general form of the gradient while the information (I1)-(I3) encodes the actual clouds which is subsequently plugged into the gradient. The remainder of this work consists of understanding cases with varying (G1)-(G5) information. We first define the general form of the problem as follows.

\begin{problem}\label{prob:problem1}
    Suppose $f_X$ and $f_Z$ are the constructed functions over point clouds $X$ and $Z$, respectively. The problem of aligning two point clouds can be formulated as minimizing the distance between $f_X$ and $h.f_Z$ in the sense of the RKHS norm. That is, we want to solve
	\begin{equation}\label{eq:max1}
		\arg\min_{h\in \Gcal} \, J(h),\quad J(h):= \lVert f_X - h.f_Z \rVert^2_{\Hcal},
	\end{equation}
	where $\mathcal{H}$ is the RKHS induced by the kernel $k$ from (G5).
\end{problem}

In this work, we restrict the problem to a special case. If $\Gcal\curvearrowright C^\infty(M)$ is an isometry, we have $\lVert f_X - h.f_Z \rVert^2_{\Hcal} = \lVert f_X \rVert^2_{\Hcal} + \lVert f_Z \rVert^2_{\Hcal} - 2 \langle f_X, h.f_Z\rangle_{\mathcal{H}}$. Then we can reformulate Problem~\ref{prob:problem1} into a reduced form as follows.
\begin{problem}\label{prob:problem2}
	The problem of aligning the point clouds can now be rephrased as maximizing the scalar products of $f_X$ and $h.f_Z$, i.e., we want to solve
	\begin{equation}\label{eq:max2}
		\arg\max_{h\in \Gcal} \, F(h),\quad F(h):= \langle f_X, h.f_Z\rangle_{\mathcal{H}}.
	\end{equation}
\end{problem}
\subsection{Constructing the Functions}
We first choose a symmetric function $k:M\times M\to \mathbb{R}$ to be the kernel of our RKHS, $\Hcal$. This allows us to turn the point clouds to functions via
\begin{equation}
\begin{split}
	f_X(\cdot) &:= \sum_{x_i\in X} \, \ell_X(x_i) k(\cdot,x_i), \\
	f_Z(\cdot) &:= \sum_{z_j\in Z} \, \ell_Z(z_j) k(\cdot,z_j).
\end{split}
\end{equation}
We can now define the inner product of $f_X$ and $f_Z$ by
\begin{equation}\label{eq:scalar}
\langle f_X,f_Z\rangle_{\Hcal} := \sum_{\substack{x_i\in X, z_j\in Z}} \, \langle \ell_X(x_i),\ell_Z(z_j)\rangle_{\mathcal{I}} \cdot k(x_i,z_j).
\end{equation}

We note that it is possible to use the well-known kernel trick in machine learning~\citep{bishop2006pattern,rasmussen2006gaussian,murphy2012machine} to substitute the inner products in~\eqref{eq:scalar} with the appearance/semantic kernel, i.e., $k_c:\mathcal{I} \times \mathcal{I} \to \mathbb{R}$. The kernel trick can be applied to carry out computations implicitly in the high dimensional space, which leads to computational savings when the dimensionality of the feature space is large compared to the number of data points~\citep{rasmussen2006gaussian}. After applying the kernel trick to~\eqref{eq:scalar}, we get
\begin{align}
\label{eq:newscalar}
    \langle f_X,f_Z\rangle_{\Hcal} =& \sum_{\substack{x_i\in X, z_j\in Z}} \,  k_c(\ell_X(x_i),\ell_Z(z_j)) \cdot k(x_i,z_j)
    := \sum_{\substack{x_i\in X, z_j\in Z}} \,  c_{ij} \cdot k(x_i,z_j).
\end{align}
\begin{remark}
	We note two advantages of measuring the alignment of $X$ and $Z$ by \eqref{eq:scalar}. The first is that we do not need identification of which point of $X$ should be paired with what point of $Z$. The second is that the number of points in $X$ does not even need to be equal to the number of points in~$Z$!
\end{remark}
\begin{remark}
    Although the double sum in~\eqref{eq:scalar} seems to make the problem computationally intractable when dealing with a large number of data points, the inner product in~\eqref{eq:scalar} is sparse. The sparse structure is due to the kernelized formulation and the fact that the kernel bandwidth is spatially limited. As such, most kernel terms are zero. See Fig. 1 in~\citet{MGhaffari-RSS-19}. This notion of sparsity extends to labels, i.e., semantic information, once we kernelize the information inner product as done in~\eqref{eq:newscalar}. Furthermore, to exploit this sparse structure is the motivation for the development of a custom solver. However, the specific solver used in this work is not tied to the proposed framework.
\end{remark}
\subsection{Building the Gradient Flow}
In order to (at least locally) solve~\eqref{eq:max2}, we will construct a gradient flow: $\dot{h}=\nabla F(h)$ (this is similar to the treatment in \cite{bloch1992} where a gradient flow is used to maximize the inner product between elements of a Lie algebra corresponding to a compact Lie group). Before we proceed, we will first determine the differential, $dF$. In order to do this, we will need the notion of an infinitesimal generator for a group action (see chapter 4 of \citealt{berndt2001introduction}). 
\begin{definition}\label{def:infgen}
	Suppose that a Lie Group $\Gcal$ acts diffeomorphically on a smooth manifold $M$ via $\varphi$; that is
	\begin{equation*}
	\begin{split}
		\varphi:\Gcal&\to \diff(M) \\
		g &\mapsto \varphi_g.
	\end{split}
	\end{equation*}
	For a given $\xi\in\mathfrak{g}=\Lie(\Gcal)$, we denote the vector field $\xi_M$ (called the infinitesimal generator) on $M$ given by the rule:
	\begin{equation}
		df_x(\xi_M) := \left.\frac{d}{dt}\right|_{t=0} \, f(\varphi_{\exp(t\xi)}(x)),\quad f\in C^\infty(M).
	\end{equation}
\end{definition}
This lets us compute the differential, $dF$.

\begin{theorem}
	Suppose that $F(h) = \langle f_X,h.f_Z\rangle_{\mathcal{H}}$ as described above. Then
	\begin{equation}\label{eq:differential}
	dF_e(\xi) = \sum_{\substack{x_i\in X\\ z_j\in Z}} \, c_{ij}\cdot d\left(\tilde{k}_{x_i}\right)_{z_j}\left(-\xi_M(z_j)\right),
	\end{equation}
	where $\tilde{k}_{x_i} = k(x_i,\cdot)$.
\end{theorem}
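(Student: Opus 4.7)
The plan is to unwind $F(h)$ into an explicit finite sum depending on $h$ through the second slot of the kernel, then commute the $t$-derivative with the sum and recognize each summand as an instance of Definition~\ref{def:infgen}. Concretely, I would first invoke Remark~\ref{rmk:hinv} to rewrite $h.f_Z = f_{h^{-1}Z}$ and substitute into the RKHS inner product formula~\eqref{eq:newscalar}, obtaining
\begin{equation*}
F(h) \;=\; \langle f_X, h.f_Z\rangle_{\mathcal{H}} \;=\; \sum_{\substack{x_i\in X\\ z_j\in Z}} c_{ij}\, k\bigl(x_i,\,\varphi_{h^{-1}}(z_j)\bigr) \;=\; \sum_{\substack{x_i\in X\\ z_j\in Z}} c_{ij}\, \tilde{k}_{x_i}\!\bigl(\varphi_{h^{-1}}(z_j)\bigr),
\end{equation*}
so that $F$ is manifestly smooth on $\Gcal$ and the only dependence on $h$ sits inside $\tilde{k}_{x_i}\circ \varphi_{h^{-1}}$ evaluated at $z_j$.

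Next I would compute $dF_e(\xi)$ by its definition as $\frac{d}{dt}\big|_{t=0} F(\exp(t\xi))$. Since $F$ is a finite sum and each term is smooth in $t$, the derivative commutes with $\sum_{i,j}$, producing
\begin{equation*}
dF_e(\xi) \;=\; \sum_{\substack{x_i\in X\\ z_j\in Z}} c_{ij}\, \left.\frac{d}{dt}\right|_{t=0} \tilde{k}_{x_i}\!\bigl(\varphi_{\exp(-t\xi)}(z_j)\bigr).
\end{equation*}
Writing $\exp(-t\xi) = \exp(t\cdot(-\xi))$ and applying Definition~\ref{def:infgen} to the smooth function $\tilde{k}_{x_i}\in C^\infty(M)$ at the point $z_j$ with Lie algebra element $-\xi$, each summand becomes $d(\tilde{k}_{x_i})_{z_j}\bigl((-\xi)_M(z_j)\bigr)$. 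A small lemma I would invoke (or verify in one line from the definition) is $\mathbb{R}$-linearity of $\xi\mapsto \xi_M$, so that $(-\xi)_M = -\xi_M$, which yields exactly the stated formula~\eqref{eq:differential}.

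The main conceptual point, and the only place care is needed, is the sign arising from Remark~\ref{rmk:hinv}: the action on functions uses $h^{-1}$, so differentiating $h=\exp(t\xi)$ inside that inverse produces the minus sign on $\xi_M$. Everything else is bookkeeping: pulling the derivative into the finite sum (justified because $c_{ij}$ are constants independent of $h$), and matching the resulting one-parameter family $t\mapsto \varphi_{\exp(t(-\xi))}(z_j)$ against the template in Definition~\ref{def:infgen}. No analytic subtleties such as interchange of limits, differentiation under integrals, or convergence issues arise, since we never leave the regime of finite sums of smooth functions on a Lie group.
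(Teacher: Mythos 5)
Your proposal is correct and follows essentially the same route as the paper's proof: expand $F$ as the finite double sum, differentiate term by term via the chain rule, and identify the velocity of $t\mapsto \exp(-t\xi)z_j$ at $t=0$ as $-\xi_M(z_j)$. Your phrasing of that last step through $(-\xi)_M = -\xi_M$ and Definition~\ref{def:infgen} is just a slightly more explicit bookkeeping of what the paper does directly.
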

\begin{remark}
	The notation for the differential of a function used throughout this paper is $df_x(v)$, where $x\in M$ and $v\in T_xM$:
	\begin{equation*}
	df_x(v) = \left.\frac{d}{dt}\right|_{t=0} \, f(c(t)),\quad c(0)=x,\quad c'(0)=v.
	\end{equation*}
\end{remark}
\begin{proof}
	This follows from a straightforward application of the chain rule. 
	\begin{equation*}
	\begin{split}
	dF_e(\xi) &= \left.\frac{d}{dt}\right|_{t=0} \, \langle f_X,\exp(t\xi).f_Z\rangle_{\Hcal} \\
	&= \sum \, c_{ij}\cdot \left.\frac{d}{dt}\right|_{t=0}\, k\left(x_i,\exp(-t\xi)z_j\right) \\
	&= \sum \, c_{ij}\cdot d\left(\tilde{k}_{x_i}\right)_{z_j} \cdot \left.\frac{d}{dt}\right|_{t=0} \, \exp(-t\xi)z_j \\
	&= \sum_{\substack{x_i\in X\\ z_j\in Z}} \, c_{ij} \cdot d\left(\tilde{k}_{x_i}\right)_{z_j}\left(-\xi_M(z_j)\right).
	\end{split}
	\end{equation*}
	This matches equation \eqref{eq:differential}.
\end{proof}
Of course, to construct the gradient flow we are interested in computing $dF_h$ instead of just $dF_e$. We will accomplish this via left-translation. Left-translation is given by the smooth map $\ell_h:\Gcal\to \Gcal$ where $x\mapsto hx$. Its differential gives rise to an isomorphism of tangent spaces, $(\ell_h)_*:\mathfrak{g}\xrightarrow{\sim} T_h\Gcal$.
\begin{corollary}
	Under the identification $T_h\Gcal \cong (\ell_h)_*\mathfrak{g}$, we have
	\begin{equation}\label{eq:diff_calc}
	dF_h\left( (\ell_h)_*\xi\right) = 
	\sum_{\substack{x_i\in X\\ z_j\in Z}} \, c_{ij} \cdot
	d\left(\tilde{k}_{x_i}\right)_{h^{-1}z_j}\left(-\xi_M(h^{-1}z_j)\right),
	\end{equation}
\end{corollary}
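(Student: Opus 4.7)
The plan is to reduce the corollary to a single application of the theorem via left-translation, so that almost nothing beyond the chain rule is needed. First I would recall that left-translation $\ell_h:\Gcal\to\Gcal$, $g\mapsto hg$, satisfies $(d\ell_h)_e = (\ell_h)_*$, and that $\ell_h(e)=h$, so by the chain rule
\begin{equation*}
dF_h\!\left((\ell_h)_*\xi\right) = d(F\circ \ell_h)_e(\xi) = \left.\frac{d}{dt}\right|_{t=0} F(h\exp(t\xi)).
\end{equation*}
This rewrites everything as a derivative at the identity, which is the regime where we have already established a formula.

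Next I would expand the right-hand side using the inner product formula. By Remark~\ref{rmk:hinv}, $(h\exp(t\xi)).f_Z = f_{\exp(-t\xi)h^{-1}Z}$, so the definition of the RKHS inner product in \eqref{eq:newscalar} gives
\begin{equation*}
F(h\exp(t\xi)) = \langle f_X,(h\exp(t\xi)).f_Z\rangle_{\Hcal} = \sum_{\substack{x_i\in X\\ z_j\in Z}} c_{ij}\, k\!\left(x_i,\exp(-t\xi)h^{-1}z_j\right).
\end{equation*}
Here $h$ is fixed and only the argument of the kernel's second slot depends on $t$, so the structure is exactly the one that appeared in the proof of the theorem, only with the base point $z_j$ replaced by $h^{-1}z_j$.

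Finally I would differentiate at $t=0$ term by term. Since $\tilde{k}_{x_i}=k(x_i,\cdot)$, the chain rule yields
\begin{equation*}
\left.\frac{d}{dt}\right|_{t=0} k\!\left(x_i,\exp(-t\xi)h^{-1}z_j\right) = d(\tilde{k}_{x_i})_{h^{-1}z_j}\!\left(\left.\frac{d}{dt}\right|_{t=0}\exp(-t\xi)\cdot h^{-1}z_j\right),
\end{equation*}
and by Definition~\ref{def:infgen} the inner time-derivative is $-\xi_M(h^{-1}z_j)$. Summing over $i,j$ produces exactly \eqref{eq:diff_calc}.

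There is no real obstacle here; the argument is essentially bookkeeping, and the only subtle point to get right is the sign and the base point. The minus sign comes from the $h^{-1}$ in $h.f(x)=f(h^{-1}x)$ (hence the $\exp(-t\xi)$), while the base point shifts from $z_j$ to $h^{-1}z_j$ because $h$ has already been absorbed into the argument of $f_Z$. Once those are tracked correctly, the corollary follows immediately from the theorem applied with the cloud $Z$ replaced by $h^{-1}Z$ (with inherited labels), which is in fact an alternative and slightly slicker route to the same identity.
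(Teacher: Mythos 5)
Your proof is correct, and it follows the same route the paper intends (the paper states the corollary without a written proof, but its preceding discussion explicitly sets up exactly this left-translation argument): pull $dF_h$ back to the identity via $\ell_h$, note that $(h\exp(t\xi)).f_Z = f_{\exp(-t\xi)h^{-1}Z}$, and rerun the theorem's chain-rule computation with base points $h^{-1}z_j$. Your handling of the sign from $h.f(x)=f(h^{-1}x)$ and of the shifted base point is exactly the bookkeeping required, so nothing is missing.
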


In order to turn the co-vector $dF_h\in T_h^*\Gcal$ into a vector $\nabla F_h\in T_h\Gcal$, we will use a left-invariant metric. This can be accomplished by defining an inner-product, $\langle\cdot,\cdot\rangle_\mathfrak{g}$ on $\mathfrak{g}$ and lifting to a (Riemannian) metric on $\Gcal$ via left-translation, i.e.
$$\langle (\ell_h)_*\eta,(\ell_h)_*\xi\rangle_{T_h\Gcal} := \langle \eta,\xi\rangle_{\mathfrak{g}}.$$
This allows us to define the gradient of $F$ as
\begin{equation}
\langle \nabla F_h,(\ell_h)_*\xi\rangle_{T_h\Gcal} = dF_h\left((\ell_h)_*\xi\right).
\end{equation}
This allows for a way to obtain a (local) solution to \eqref{eq:max2} by following
\begin{equation}\label{eq:gradient_flow}
\dot{h} = \nabla F(h).
\end{equation}
To demonstrate the generality of this registration algorithm, the cases in Table \ref{tab:MG} will be examined in the remainder of this paper.
\begin{table}[h]
    \centering
    \begin{tabular}{r|cccc}
         $M$ & $S^1$ & $T^2$ & $S^2$  & $\mathbb{R}^n$\\ \hline  
         $\mathcal{G}$& $S^1$ & $T^2$ & $\mathrm{SO}(3)$ & $\SE(n)$ \\
    \bottomrule
    \end{tabular}
    \caption{A list of the manifolds / Lie groups to be studied in this paper. Particular interest will be placed on the $(\mathbb{R}^2,\SE(2))$ and $(\mathbb{R}^3,\SE(3))$ examples.}
	\label{tab:MG}
\end{table}

\section{Low-dimensional Examples}
\label{sec:lowdim}
While the primary focus of this paper is to align RGB-D images in $\mathbb{R}^3$, we take the opportunity to show how the above algorithm works on other spaces: specifically the spaces $S^1$, $T^2$, and $S^2$: the circle, the 2-torus, and the sphere.
\subsection{The Circle}\label{sec:circle_example}
To demonstrate our algorithm, we will first work out the case where the manifold containing the clouds is the circle, $M=S^1$. $S^1$ acts naturally on itself via left-translations so we shall take $G=S^1$ as well and $\varphi$ to be left-translation, i.e.
\begin{equation*}
    \varphi(\alpha)(\theta) = \alpha+\theta \mod 2\pi.
\end{equation*}
The Lie algebra for the circle is the real line and we will therefore take the inner product to be scalar multiplication,
\begin{equation*}
    \langle a,b\rangle_{\mathbb{R}} = ab.
\end{equation*}
The last piece of geometry we need is the kernel (the information space and the clouds will be determined later). To build $k$, we will use the Euclidean distance of points on the circle: let $x,y\in [0,2\pi)\sim S^1$.
\begin{equation}
\begin{split}
    d_2(x,y)^2 &:= \left( \cos x-\cos y\right)^2 + \left(\sin x-\sin y\right)^2 \\
    &= 2\left( 1 - \cos(x-y)\right).
\end{split}
\end{equation}
We will use the Gaussian kernel based on this distance function:
\begin{equation}
    k(x,y) = \sigma^2\exp\left(\frac{-d_2(x,y)^2}{2\ell^2}\right),
\end{equation}
where $\ell>0$ is a parameter called the length-scale.

If we again call $c_{ij} = \langle \ell_X(x_i),\ell_Z(z_j)\rangle_\mathcal{I}$, then using \eqref{eq:diff_calc}, the differential is (for $s\in\mathbb{R}$)
\begin{equation*}
    dF_h((\ell_h)_*s) = \frac{1}{\ell^2} \sum_{\substack{x_i\in X\\ z_j\in Z}}
    c_{ij}k(x_i,z_j-h)\sin(z_j-h-x_i)\cdot s.
\end{equation*}
Therefore, the gradient is
\begin{equation}
    \nabla F(h) = \frac{1}{\ell^2}
    \sum_{\substack{x_i\in X, z_j\in Z}}
    c_{ij}k(x_i,z_j-h)\sin(z_j-h-x_i).
\end{equation}
Finally, the Hessian can be found by simply differentiating the gradient as is
\begin{equation}
    H_{S^1} = \frac{1}{\ell^2} \, \sum_{x_i\in X, z_j\in Z} \, c_{ij}k(x_i,z_j-h) [\sin^2\left(z_j - h - x_i \right) -\cos\left(z_j - h - x_i \right) ].
\end{equation}

\begin{example}
We perform the registration problem on the circle. $X=\texttt{linspace}(0,\pi/2,10)$ and $Z=\texttt{linspace}(\pi/2,\pi,12)$. Figure~\ref{fig:clouds_circle_registration} shows two point clouds before and after registration. The labels , $\ell_X$ and $\ell_Z$, for all points are set to $1$.
\begin{figure}%
    \centering
    \subfloat
    {\includegraphics[width=0.4\columnwidth]{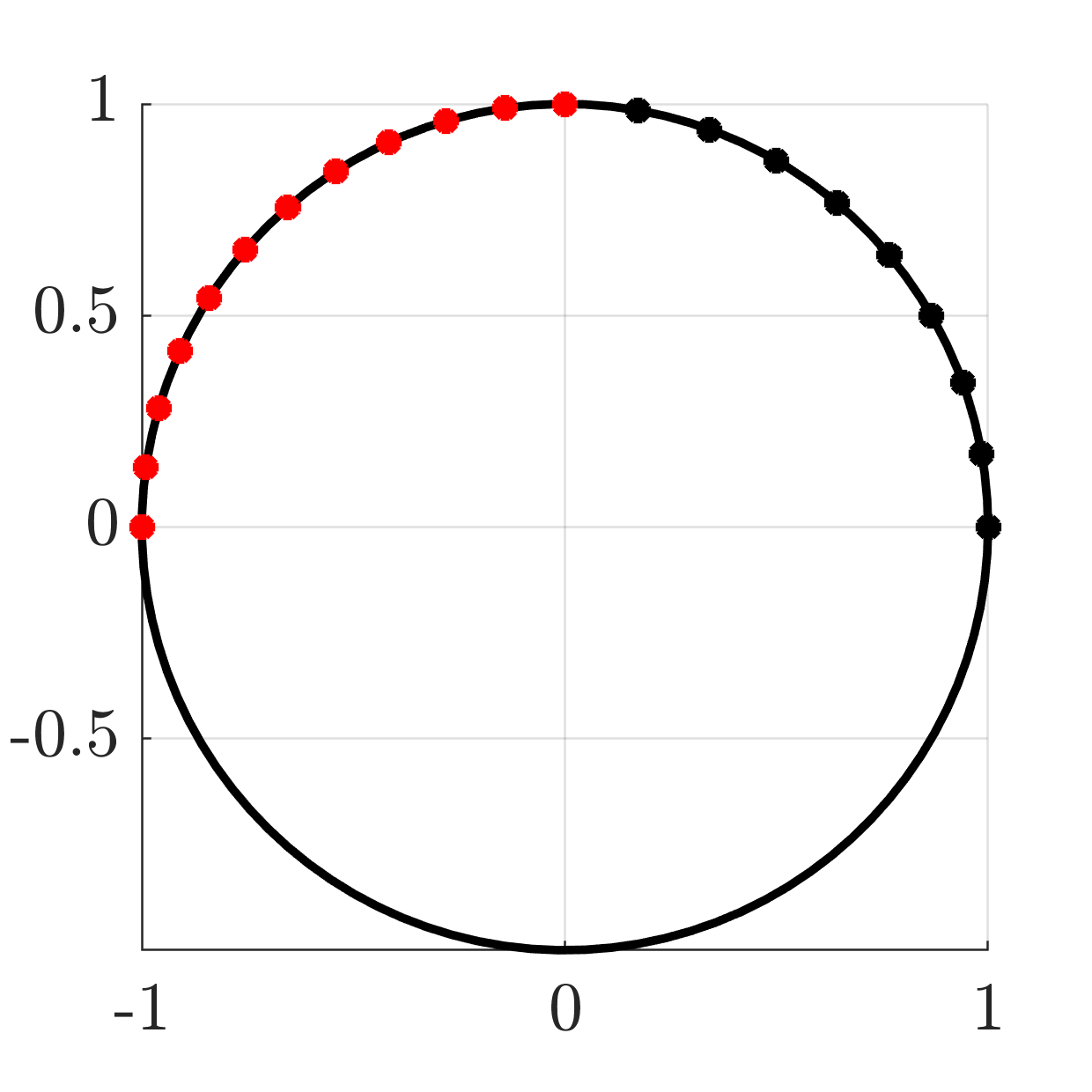}}
    \subfloat
    {\includegraphics[width=0.4\columnwidth]{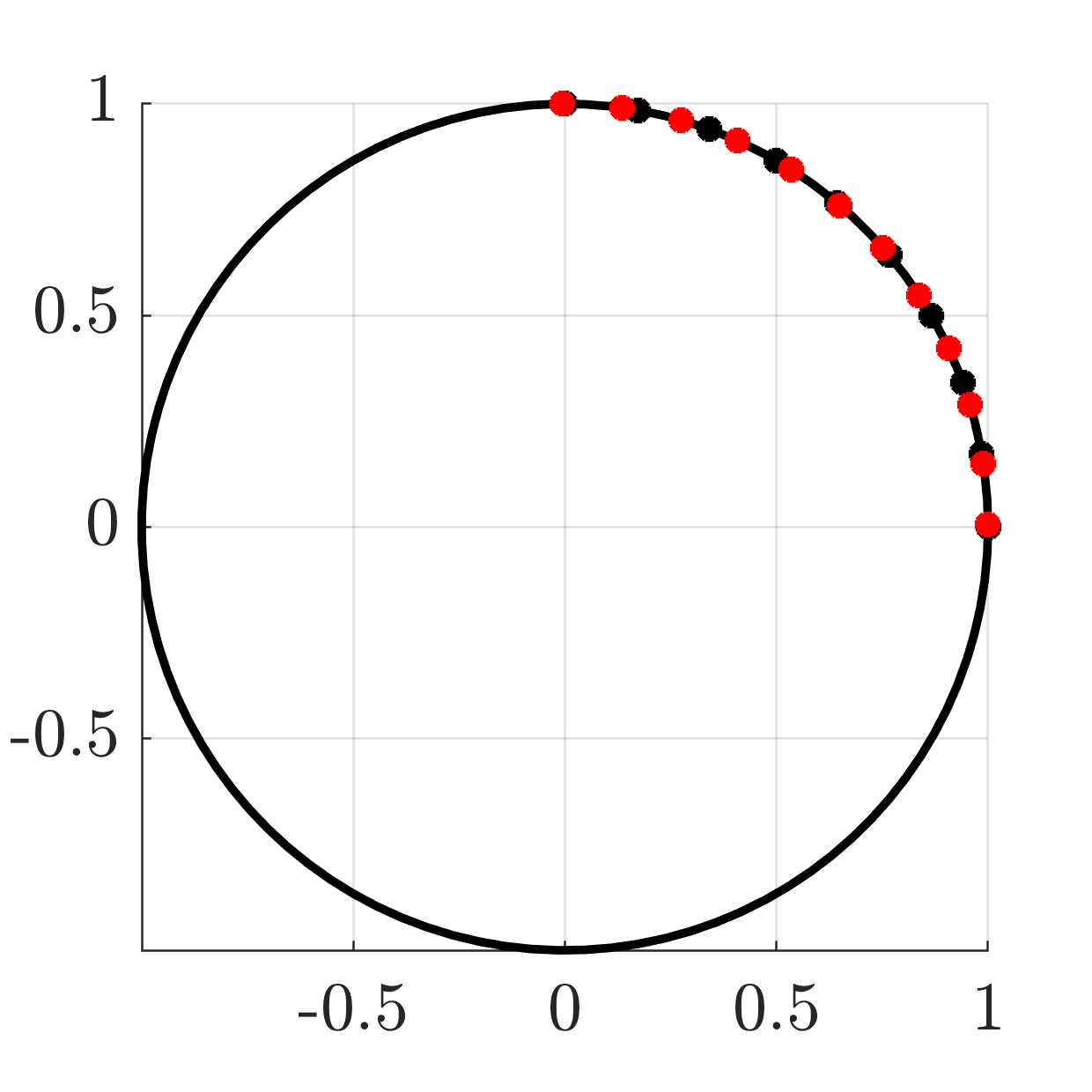}}
    \caption{Left: The clouds $X$ and $A\cdot Z$. Right: The clouds $X$ and $T\cdot A\cdot Z$.}
    \label{fig:clouds_circle_registration}
\end{figure}
\end{example}
\subsection{The Torus}\label{sec:torus}
The torus is the product of two circles: $T^2 = S^1\times S^1$. Therefore, both the gradient and Hessian will be two copies of the corresponding object from the circle case. 

Similarly to the circle case, the group will act via left-translations and we will take $(x^1,x^2)\in[0,2\pi)\times[0,2\pi)\sim T^2$ for coordinates. The kernel will be
\begin{equation}
\begin{split}
    k(x,y) &= \sigma^2\exp\left( \frac{-d_{T^2}(x,y)^2}{2\ell^2} \right), \\
    d_{T^2}(x,y)^2 &= d_2(x^1,y^1)^2 + d_2(x^2,y^2)^2.
    \end{split}
\end{equation}
The gradient is then
\begin{equation}
    \nabla F(h) = \frac{1}{\ell^2} \, \sumxz \, c_{ij} k(x_i,z_j-h) \begin{bmatrix}
    \sin\left(z_j^1 - h^1 - x_i^1\right) \\
    \sin\left(z_j^2 - h^2 - x_i^2\right)
    \end{bmatrix}.
\end{equation}
The Hessian is straight-forward to calculate due to the fact that $T^2$ is abelian. The Hessian is
\begin{equation}
    H_{T^2} = \frac{1}{\ell^2} \, \sumxz \, c_{ij} k(x_i,z_j-h) \cdot h_{ij},
\end{equation}
where
\begin{equation*}
\begin{split}
    h_{ij} = \diag \bigg( &\sin^2\left( z_j^1-h^1-x_i^1\right) - \cos\left(z_j^1 - h^1 - x_i^1\right), \\
    & \sin^2\left( z_j^2-h^2-x_i^2\right) - \cos\left(z_j^2 - h^2 - x_i^2\right) \bigg)
\end{split}
\end{equation*}
This procedure naturally generalizes to $T^n$.

\begin{example}
We perform the registration problem on the torus with a 5 pointed star as shown in Figure~\ref{fig:clouds_torus_registration}. $X$ contains 50 points while $Z$ contains 75. The displacement is $[1,1]$ and the transformation found was $[1.0077,0.9962]$ which is an error of $[-0.0077,0.0038]$ which has norm of 0.0085.
\begin{figure}%
    \centering
    \subfloat
    {\includegraphics[width=0.45\columnwidth]{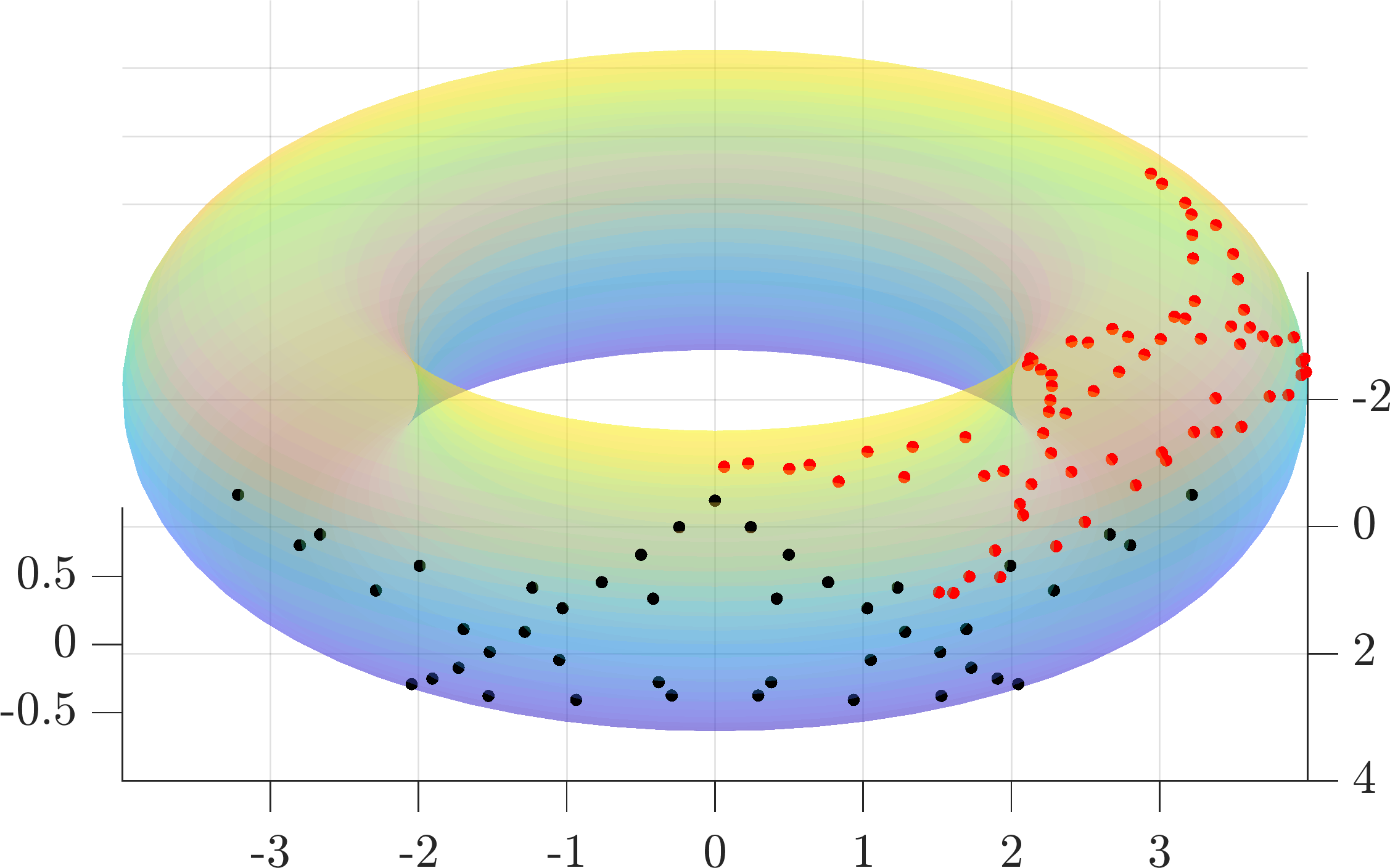}}
    \subfloat
    {\includegraphics[width=0.45\columnwidth]{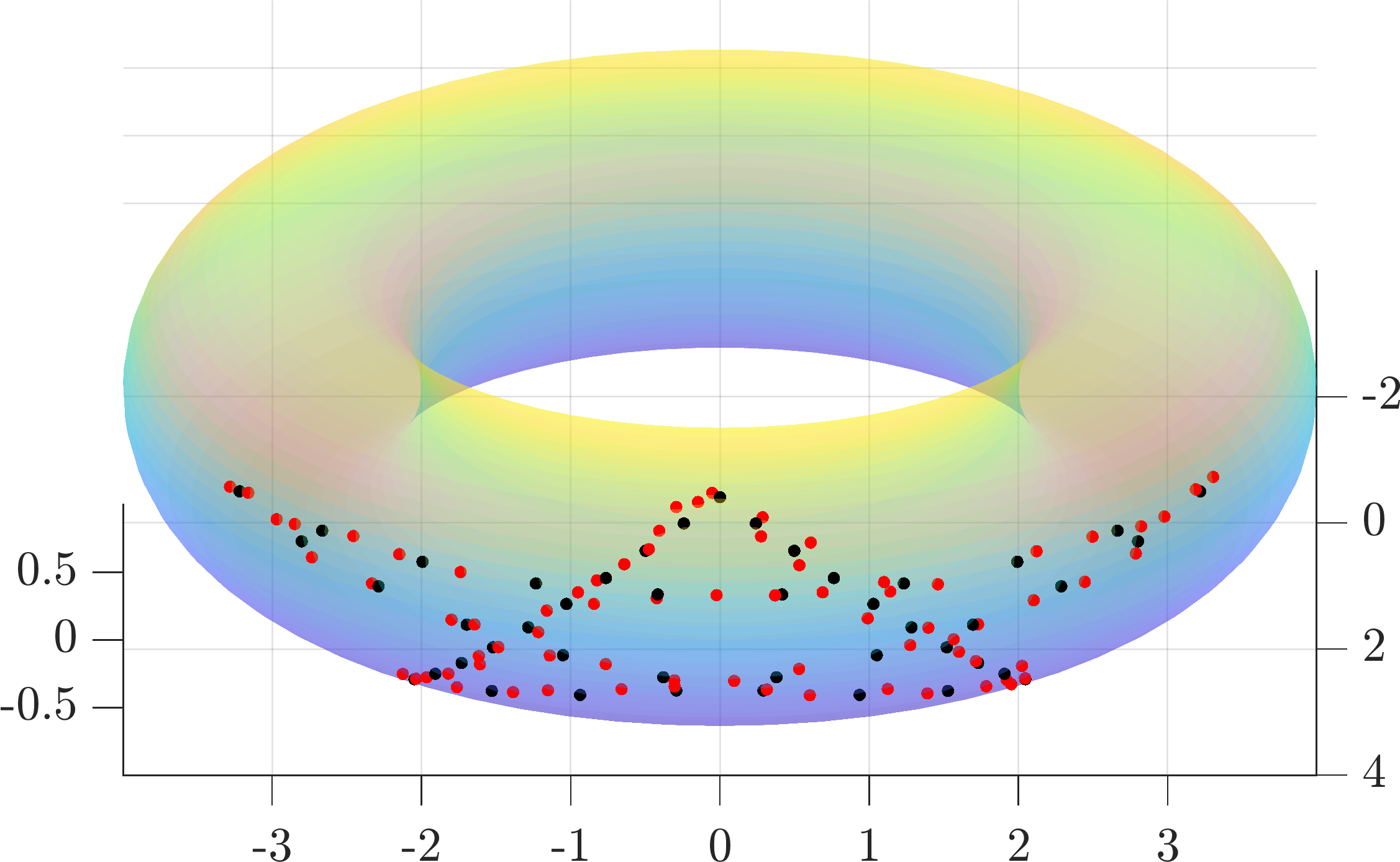}}
    \caption{Top: The clouds $X$ and $A\cdot Z$. Bottom: The clouds $X$ and $T\cdot A\cdot Z$.}
    \label{fig:clouds_torus_registration}
\end{figure}
\end{example}

\subsection{The Sphere}
The other initial example we wish to develop is the case when $M=S^2$, the sphere. The Lie group we choose to act will be $\mathrm{SO}_3$, the special orthogonal group. This will act via usual matrix multiplication: let $x\in S^2\subset \mathbb{R}^3$, then
\begin{equation}
    \varphi(g)(x) = gx.
\end{equation}
In terms of the point cloud data, assume that we have clouds with points in $\mathbb{R}^3$. These points will be projected to $S^2$ (via normalizing) which is compatible with the group action described above. Given two points $x,y\in\mathbb{R}^3$, their arc-length distance, once projected to $S^2$, is
\begin{equation}
    d_{S^2}(x,y) = \arccos\left(\frac{\langle x,y\rangle_3}{\lVert x\rVert\lVert y\rVert}\right).
\end{equation}
With this distance (which is different from the distance used in the circle example), we define the kernel to be
\begin{equation}
k(x,y) = \exp\left(\frac{-d_{S^2}(x,y)^2}{2\ell^2}\right).
\end{equation}
In order to determine the gradient, we still need a metric for $\mathfrak{so}_3$ for which we will use the (negative of the) Killing form. This gives a gradient of $\nabla F(R) = R\hat{\omega}$ where $R\in\mathrm{SO}_3$, $\hat{\cdot}:\mathbb{R}^3\to \mathrm{skew}_3$ (three-by-three skew-symmetric matrices), and
\begin{equation}\label{eq:sphere_grad}
    \omega = \frac{1}{\ell^2} \sum_{\substack{x_i\in X\\ z_j\in Z}} c_{ij}
    \frac{k(x_i,R^{-1}z_j)d_{S^2}(x_i,R^{-1}z_j)}
    {\sqrt{\lVert x_i\rVert^2\lVert z_j\rVert^2 - \langle x_i,R^{-1}z_j\rangle^2}}
    \left(x_i\times R^{-1}z_j\right).
\end{equation}

\subsubsection{The Hessian}
Unlike the circle and torus examples, calculating the Hessian for the sphere is substantially more involved. This follows from the fact that $\SO(3)$ is no longer abelian. In order to determine the Hessian, we need to know the connection on $\SO(3)$ induced by the Killing form on $\mathfrak{so}(3)$. Let us choose the standard basis $\{e_x,e_y,e_z\}\in\mathfrak{so}(3)$ where
\begin{equation*}
    e_x = \begin{bmatrix}[r]
    0 & 0 & 0 \\
    0 & 0 & -1 \\
    0 & 1 & 0
    \end{bmatrix},~ e_y = \begin{bmatrix}[r]
    0 & 0 & 1 \\
    0 & 0 & 0 \\
    -1 & 0 & 0
    \end{bmatrix},~ e_z = \begin{bmatrix}[r]
    0 & -1 & 0 \\
    1 & 0 & 0 \\
    0 & 0 & 0
    \end{bmatrix}.
\end{equation*}
Additionally, let $\{E_x,E_y,E_z\}$ be their corresponding left-invariant vector fields. The connection, calculated from \eqref{eq:LC_connection}, is given by
\begin{equation*}
\begin{split}
    \nabla_{E_x}E_y = \frac{1}{2} E_z, & \quad \nabla_{E_x}E_z = -\frac{1}{2}E_y, \\
    \nabla_{E_y}E_z = \frac{1}{2} E_x, & \quad \nabla_{E_y}E_x = -\frac{1}{2}E_z, \\
    \nabla_{E_z}E_x = \frac{1}{2} E_y, & \quad \nabla_{E_z}E_y = -\frac{1}{2}E_x.
\end{split}
\end{equation*}
Writing \eqref{eq:sphere_grad} as $\hat{\omega} = \omega_xe_x + \omega_ye_y+\omega_ze_z$, the Hessian is given by
\begin{equation*}
    H(e_\alpha,e_\beta) = \sum_{\gamma\in\{x,y,z\}} \, \langle \left(\mathcal{L}_{E_\alpha}\omega_\gamma\right)E_\gamma 
    + \omega_\gamma \nabla_{E_\alpha}E_\gamma,e_\beta\rangle 
    = \mathcal{L}_{E_\alpha}\omega_\beta + \frac{1}{2}\varepsilon_{\alpha\gamma\beta}\omega_\gamma,
\end{equation*}
where $\varepsilon_{\alpha\gamma\beta} = \langle [e_\alpha,e_\gamma],e_\beta\rangle$ are the structure constants. Combining, the Hessian is
\begin{align}\label{eq:Hsphere}
    -H_{\SO(3)} 
    = \begin{bmatrix}
    \mathcal{L}_{E_x}\omega_x & \mathcal{L}_{E_x}\omega_y -\frac{1}{2}\omega_z & \mathcal{L}_{E_x}\omega_z + \frac{1}{2}\omega_y \\
    \mathcal{L}_{E_y}\omega_x + \frac{1}{2}\omega_z & \mathcal{L}_{E_y}\omega_y & \mathcal{L}_{E_y}\omega_z - \frac{1}{2}\omega_x \\
    \mathcal{L}_{E_z}\omega_x - \frac{1}{2}\omega_y & \mathcal{L}_{E_z}\omega_y + \frac{1}{2}\omega_x & \mathcal{L}_{E_z}\omega_z
    \end{bmatrix},
\end{align}
which, although it is tedious, can be computed exactly.
\begin{remark}
    The reason that \eqref{eq:Hsphere} returns the \textit{negative} of the Hessian is because we are actually differentiating the inverse of the group element.
\end{remark}

\begin{table}[b!]
\begin{center}
\footnotesize
\begin{tabular}{l r}
\toprule
    Parameters & Value \\
        \midrule
        transformation convergence threshold $\epsilon$ & $1\mathrm{e}{-4}$ \\
        gradient norm convergence threshold $\epsilon$ & $5\mathrm{e}{-4}$ \\
        kernel characteristic length-scale $\ell$ & $0.25$ \\
        kernel characteristic length-scale $\ell$ (iteration $> 3$)  & $0.15$ \\
        kernel characteristic length-scale $\ell$ (iteration $> 10$)  & $0.10$ \\
        kernel characteristic length-scale $\ell$ (iteration $> 20$)  & $0.05$ \\
        kernel signal variance $\sigma$ & $0.1$  \\
        step length & $0.1$ \\
        color space inner product scale & $1\mathrm{e}{-5}$ \\
        kernel sparsification threshold & $1\mathrm{e}{-3}$ \\
\bottomrule
\end{tabular}
\caption{ Parameters used for evaluation of the globe registration. The kernel characteristic length-scale is chosen to be adaptive as the algorithm converges; intuitively, we prefer a large neighborhood of correlation for each point, but as the algorithm reaches the convergence reducing the local correlation neighborhood allows for faster convergence and better refinement.}
\label{tab:parameters_so3}
\end{center}
\end{table}

\begin{figure}[t]
    \centering
    \subfloat{\includegraphics[width=0.3\columnwidth]{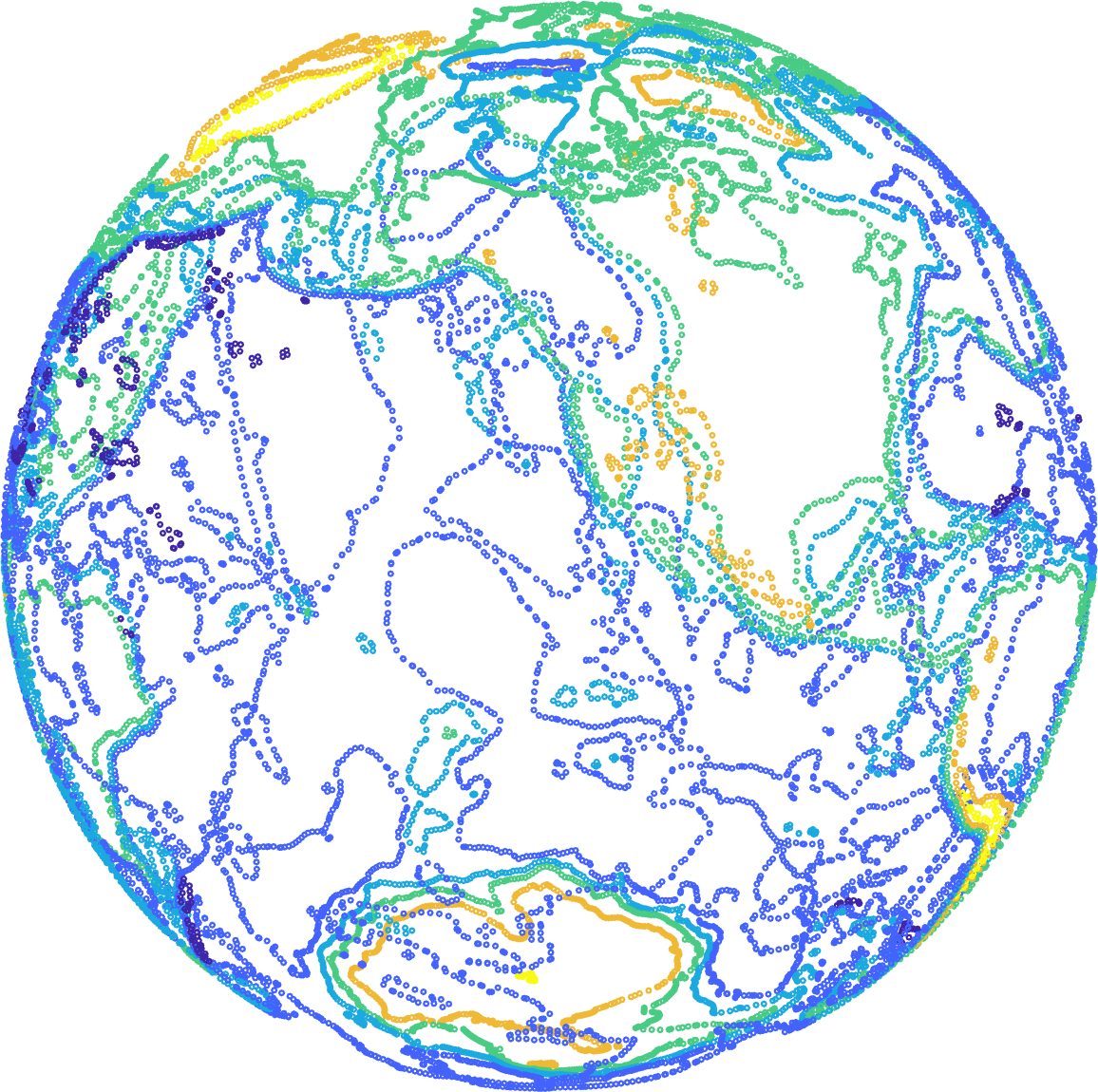}}\hfill
    \subfloat{\includegraphics[width=0.3\columnwidth]{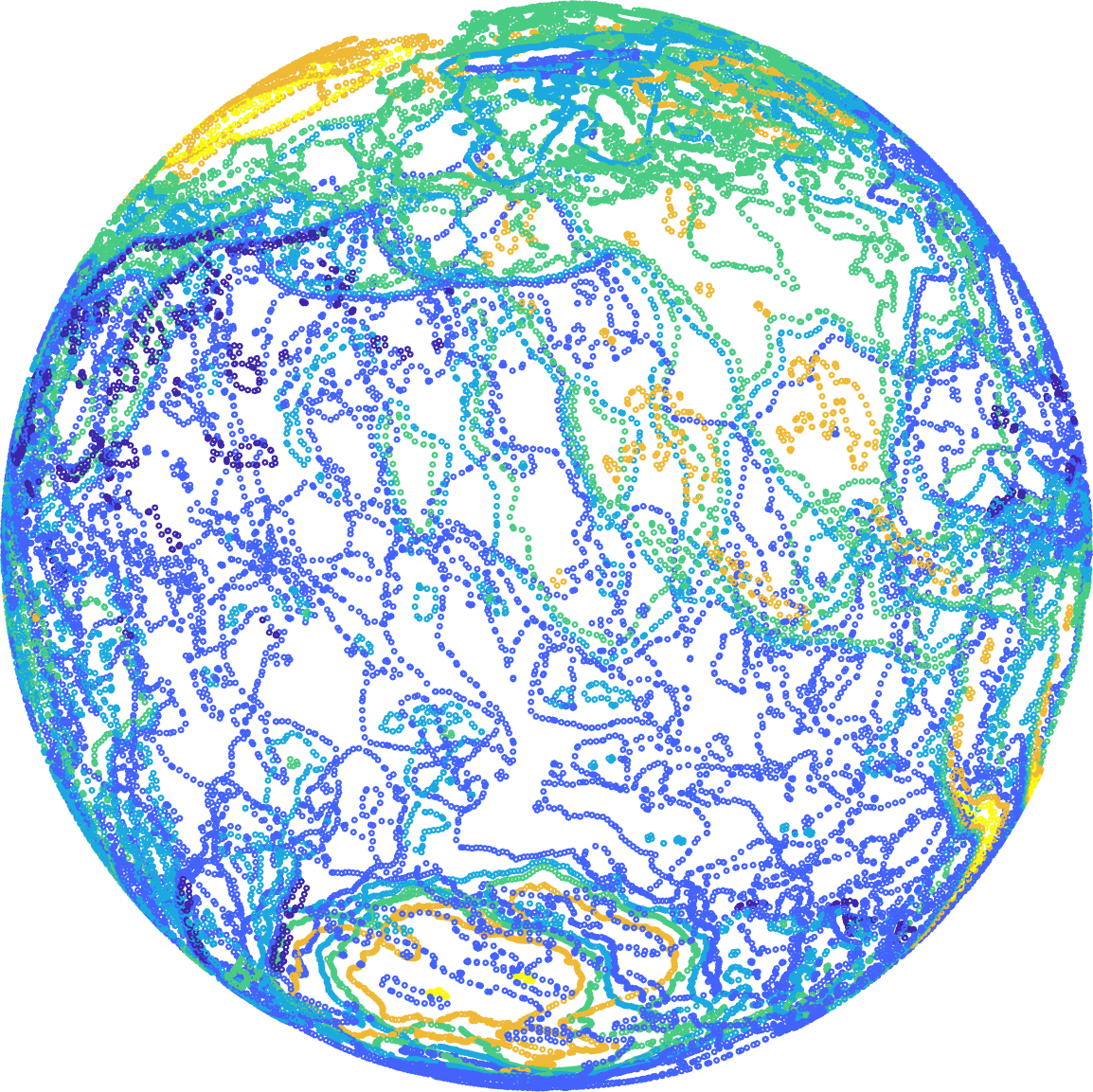}}\hfill
    \subfloat{\includegraphics[width=0.3\columnwidth]{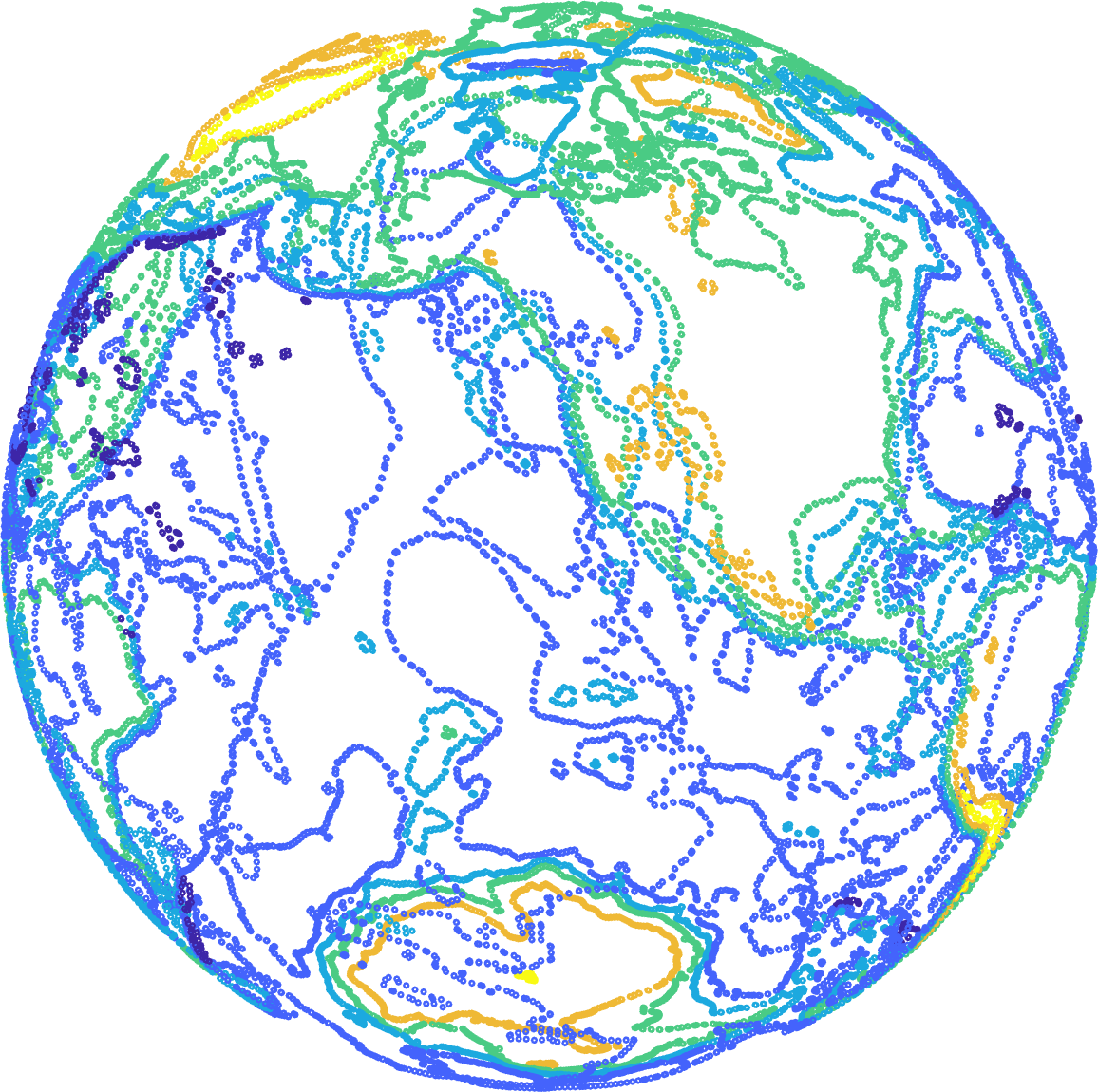}}
    \caption{Left (Reference): The image of the contour map of the Earth. Center (Perturbed): The plot of $X$ along with $A\cdot Z$. Right (Aligned): The plot of $X$ along with $T\cdot A\cdot Z$.}
    \label{fig:globe_registration}
\end{figure}

\begin{example}
To illustrate registration in this case, we will align two pictures of the globe. This will be done with a topographic map of the Earth using Matlab's \texttt{topo}. Both $X$ and $Z$ are initialized to be identical, however $Z$ is perturbed by a transformation $A\in \SO(3)$. The goal is to recover $A^{-1}$ to realign $Z$ with $X$. The algorithm reports $T\in\SO(3)$ which should be the inverse of $A$. The values of $A$, $T$, $T\cdot A$, and $\lVert T\cdot A\rVert_F$ are shown in \eqref{eq:results_so3_registration}.
\begin{equation}\label{eq:results_so3_registration}
    \begin{split}
        A &= \begin{bmatrix}[r]
        0.9386 &  -0.3170  &  0.1361 \\
        0.3230  &  0.9461  & -0.0244 \\
        -0.1210  &  0.0668  &  0.9904
        \end{bmatrix}, \quad 
        T = \begin{bmatrix}[r]
        0.9391  &  0.3217  & -0.1205 \\
        -0.3160 &   0.9466 &   0.0643 \\
        0.1347 &  -0.0223  &  0.9906
        \end{bmatrix}, \\
        T\cdot A &= \begin{bmatrix}[r]
        1.0000  & -0.0014  &  0.0007 \\
        0.0014  &  1.0000 &  -0.0024 \\
        -0.0007 &   0.0024 &   1.0000
        \end{bmatrix}, \quad 
        \lVert T\cdot A\rVert_F = 0.0040.
    \end{split}
\end{equation}
The parameters used in this registration are included in Table~\ref{tab:parameters_so3}. The initial and final images are shown in Figure~\ref{fig:globe_registration}.

\end{example}

\section{Limitations on Compact Riemannian Manifolds}
\label{sec:compact}

In this section, we discuss an important problem regarding the positive-definiteness of the kernel on compact Riemannian manifolds~\citep{jayasumana2015kernel,feragen2015geodesic,borovitskiy2020mat}. The key result is the following theorem.
\begin{theorem}[Theorem 5 of \citealt{feragen2015geodesic}, \citealt{bekka2008kazhdan}]
	Let $X$ be a topological space and let $\Psi:X\times X\to\mathbb{R}$ be a continuous kernel such that $\Psi(x,x)=0$ and $\Psi(x,y)=\Psi(y,x)$ for all $x,y\in X$. Then the following are equivalent
	\begin{enumerate}
		\item $\Psi$ is conditionally of negative type, and
		\item the kernel $\exp(t\Psi)$ is of positive type for every $t\geq 0$.
	\end{enumerate}
\end{theorem}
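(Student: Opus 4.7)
The plan is to follow the classical Schoenberg equivalence, with care taken about the sign convention implicit in the statement. The easy direction $(2)\Rightarrow(1)$ is essentially a one-variable derivative. Given $x_1,\dots,x_n\in X$ and real $c_1,\dots,c_n$ with $\sum_i c_i=0$, form
\[
f(t) \;:=\; \sum_{i,j=1}^n c_i c_j \exp\!\bigl(t\,\Psi(x_i,x_j)\bigr).
\]
By hypothesis $f(t)\geq 0$ for every $t\geq 0$, while $f(0)=\bigl(\sum_i c_i\bigr)^2=0$, so the right-derivative at $0$ is non-negative. Term-by-term differentiation (valid because the sum is finite) yields $f'(0^+)=\sum_{i,j}c_i c_j\Psi(x_i,x_j)\geq 0$, which is precisely the conditional inequality encoded by ``of negative type'' in the statement.

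The substantive direction $(1)\Rightarrow(2)$ uses a pointed linearization. Pick an arbitrary base point $x_0\in X$ and define
\[
K(x,y) \;:=\; \Psi(x,y) - \Psi(x,x_0) - \Psi(y,x_0),
\]
using $\Psi(x_0,x_0)=0$ and the symmetry of $\Psi$. First I would show $K$ is (unconditionally) of positive type: given $x_1,\dots,x_n$ and $c_1,\dots,c_n$, augment with the auxiliary point $x_0$ carrying coefficient $c_0:=-\sum_{i\geq 1}c_i$, so that $\sum_{i=0}^n c_i=0$. Applying the conditional inequality from (1) to the augmented system and expanding the double sum (the $\Psi(x_0,x_0)$ contribution vanishes) collapses exactly to $\sum_{i,j\geq 1}c_i c_j K(x_i,x_j)\geq 0$. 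With $K$ of positive type, the series $\exp(tK)=\sum_{m\geq 0}\tfrac{t^m}{m!}K^{\circ m}$ (where $K^{\circ m}$ is the $m$-th entrywise Schur power) is of positive type for every $t\geq 0$ by the Schur product theorem, together with closure of the positive-type cone under non-negative-weighted sums and pointwise limits. Finally, rewriting $\Psi(x,y)=K(x,y)+\Psi(x,x_0)+\Psi(y,x_0)$ and exponentiating gives
\[
\exp\!\bigl(t\,\Psi(x,y)\bigr) \;=\; \exp\!\bigl(t\,K(x,y)\bigr)\cdot g_t(x)\,g_t(y),\qquad g_t(x):=\exp\!\bigl(t\,\Psi(x,x_0)\bigr),
\]
exhibiting $\exp(t\Psi)$ as the Schur product of the positive-type kernel $\exp(tK)$ with the rank-one positive-type kernel $(x,y)\mapsto g_t(x)g_t(y)$, hence of positive type.

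The main obstacle I anticipate is not conceptual but purely bookkeeping: one has to thread the paper's sign convention for ``conditionally of negative type'' consistently through the pointed linearization so that $K$ emerges as positive (rather than negative) definite, and through the $(2)\Rightarrow(1)$ derivative calculation so that the resulting sign on $\sum c_i c_j\Psi$ matches (1) as written (with $\exp(+t\Psi)$ rather than $\exp(-t\Psi)$). Once the conventions are pinned down, the Schur-product step, the rank-one factorization, and the term-by-term exponential expansion are all standard, and continuity of $\Psi$ is used only to ensure continuity of $K$ and of the resulting exponential kernels.
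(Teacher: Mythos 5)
Your argument is correct, but note there is nothing in the paper to compare it against: the theorem is imported verbatim (cited to Theorem 5 of Feragen et al.\ and to Bekka et al.) and the paper offers no proof of it. What you wrote is precisely the classical Schoenberg argument used in those references: the $(2)\Rightarrow(1)$ direction by differentiating $f(t)=\sum_{i,j}c_ic_j\exp(t\Psi(x_i,x_j))$ at $t=0^+$ using $f(0)=(\sum_i c_i)^2=0$, and the $(1)\Rightarrow(2)$ direction via the pointed linearization $K(x,y)=\Psi(x,y)-\Psi(x,x_0)-\Psi(y,x_0)$ (the standard lemma, e.g.\ Lemma C.3.1 in Bekka et al.), whose positive-type property follows exactly from your augmentation with $c_0=-\sum_{i\ge1}c_i$, followed by the Schur-power series for $\exp(tK)$ and the rank-one factor $g_t(x)g_t(y)$. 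All of these steps check out. Your closing caveat about sign conventions is the one substantive point worth making explicit: as literally printed, with the standard definition of ``conditionally of negative type'' ($\sum_{i,j}c_ic_j\Psi(x_i,x_j)\le 0$ whenever $\sum_i c_i=0$), the cited sources pair it with $\exp(-t\Psi)$, not $\exp(t\Psi)$; the paper's version is consistent only if one reads condition (1) with the reversed inequality, which is exactly the reading you adopted in both directions (and it matches how the paper then applies the result to $\Psi=-d(x,y)^2$ to get the Gaussian kernel $\exp(-t\,d^2)$). Since you threaded that convention consistently through the derivative computation, the definition of $K$, and the final factorization, the proof is internally coherent and complete.
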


However, the issue is that for $\Psi=-d(x,y)^2$ to be conditionally negative, we need to be on Euclidean space. Fortunately, this is more than what we require in this work as we are not concerned if the positive-definiteness holds for all $t\geq 0$. Even though we are not on a Euclidean space, we do not know that there does not exist a $t\geq 0$ to make it positive definite. The following theorem ensures the existence of such a $t$.
\begin{theorem}
	Let $M\subset \mathbb{R}^m$ be some embedded manifold. Let $d_c$ be the chordal distance on $M$ (straight lines in $\mathbb{R}^m$) and $d_g$ be the geodesic distance. For a finite collection of points $\{x_i\}\subset M$, there exists a $t\geq 0$ such that the matrix
	\begin{equation*}
		K_{ij} = \left[ \exp\left( -t\cdot d_g(x_i,x_j)^2\right) \right]
	\end{equation*}
	is positive definite.
\end{theorem}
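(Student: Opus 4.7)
The plan is to exploit the fact that, since the collection $\{x_i\}$ is finite and the points are distinct, the off-diagonal entries of $K$ can be driven uniformly to zero while the diagonal entries stay pinned at $1$, so that $K$ approaches the identity and inherits positive definiteness from it for large enough $t$.

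Concretely, I would first observe that $K_{ii}=\exp(0)=1$ for every $i$, and that because the points are distinct and $d_g$ is a metric, we have $d_g(x_i,x_j)>0$ for $i\neq j$. Setting
\begin{equation*}
\delta := \min_{i\neq j} d_g(x_i,x_j)^2 > 0,
\end{equation*}
we get the uniform bound $0 < K_{ij} \le \exp(-t\delta)$ for $i\neq j$. Thus $\lVert K - I\rVert \to 0$ as $t\to\infty$ in any matrix norm.

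From here there are two equivalent ways to finish. The cleanest is Gershgorin's disk theorem (equivalently, strict diagonal dominance): once
\begin{equation*}
(n-1)\exp(-t\delta) < 1,\qquad \text{i.e.,} \qquad t > \frac{\log(n-1)}{\delta},
\end{equation*}
the symmetric matrix $K$ is strictly diagonally dominant with positive diagonal, hence positive definite. Alternatively, one may invoke the fact that the cone of positive definite matrices is open in the space of symmetric matrices and that eigenvalues depend continuously on matrix entries, so $K$ being close to $I$ forces all eigenvalues to be positive.

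I do not foresee a real obstacle: the argument never needs to compare $d_g$ to the chordal distance $d_c$ (that data in the statement appears to be contextual, motivating the discussion rather than entering the proof), nor does it require any intrinsic geometry of $M$ beyond $d_g$ being an honest metric that separates points. The only mild subtlety is that the threshold on $t$ depends on the specific finite cloud $\{x_i\}$ through $\delta$ and $n$, so the theorem guarantees \emph{existence} of a suitable $t$ per configuration rather than a single universal bandwidth valid for all finite subsets of $M$.
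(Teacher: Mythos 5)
Your proposal is correct and follows essentially the same route as the paper: both pin the diagonal at $1$, bound the off-diagonal row sums via the minimal pairwise geodesic distance, and invoke Gershgorin (strict diagonal dominance) to conclude positive definiteness for $t$ beyond an explicit logarithmic threshold. The only difference is cosmetic—you carry the square on $d_g$ consistently, whereas the paper's bound is stated in terms of the unsquared distance—so no further comment is needed.
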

\begin{proof}
	It is clear that for all $i$, $K_{ii}=1$. As long as the diagonal elements dominate, the matrix will be positive definite. Let
	\begin{equation*}
		R_i = \sum_{j\ne i} \, \exp\left( -t\cdot d_g(x_i,x_j)\right).
	\end{equation*}
	Choose $t\geq 0$ large enough such that $R_i < 1$ for all $i$. Then by the Gershgorin circle theorem, all the eigenvalues of $K$ are positive. A bound on $t$ can be explicitly found via the following. Let 
	\begin{equation*}
		\Lambda := \min_{i\ne j} d_g(x_i,x_j).
	\end{equation*}
	Then 
	\begin{equation*}
		R_i \leq (N-1)\cdot \exp(-t\Lambda),
	\end{equation*}
	where $N$ is the number of points. This produces
	\begin{equation*}
		t \geq \frac{\ln(N-1)}{\Lambda}.
	\end{equation*}
\end{proof}
This is not a particularly useful result as the parameter $t$ depends on the point cloud. Therefore it remains an open problem whether we can find a $t$ (or an interval) that works on all possible point clouds systematically. This is a particularly attractive problem in machine learning as one ultimately wants to learn all hyperparameters. In particular, the derivation of useful bounds on $t$ as well as algorithmic implementation of learning it are interesting future research directions.

\begin{example}[The Sphere]
Let us see how the case of the sphere holds up. Here, we take the kernel
\begin{equation}\label{eq:sphere_kernel}
	k(x,y) = \exp\left( \frac{-d_{S^2}(x,y)^q}{2\ell^2}\right), \quad d_{S^2}(x,y) = \arccos\left( \frac{\langle x,y\rangle_3}{\lVert x\rVert \lVert y \rVert} \right).
\end{equation}

In particular, $t = 1/(2\ell^2)$. For the usual case the exponent is $q=2$. However, this may be changed as well. The idea is that $k$ will not be positive definite for all $\ell$, but it should be when $\ell$ is small enough. This will be numerically explored by taking a random sample of $N$ points on $S^2$ and computing the matrix $K$. If all of its eigenvalues are positive, then it is positive definite. As such, we are interested in finding the smallest eigenvalue.

\begin{figure}[t]
	\centering
	\subfloat{\includegraphics[width=0.5\textwidth]{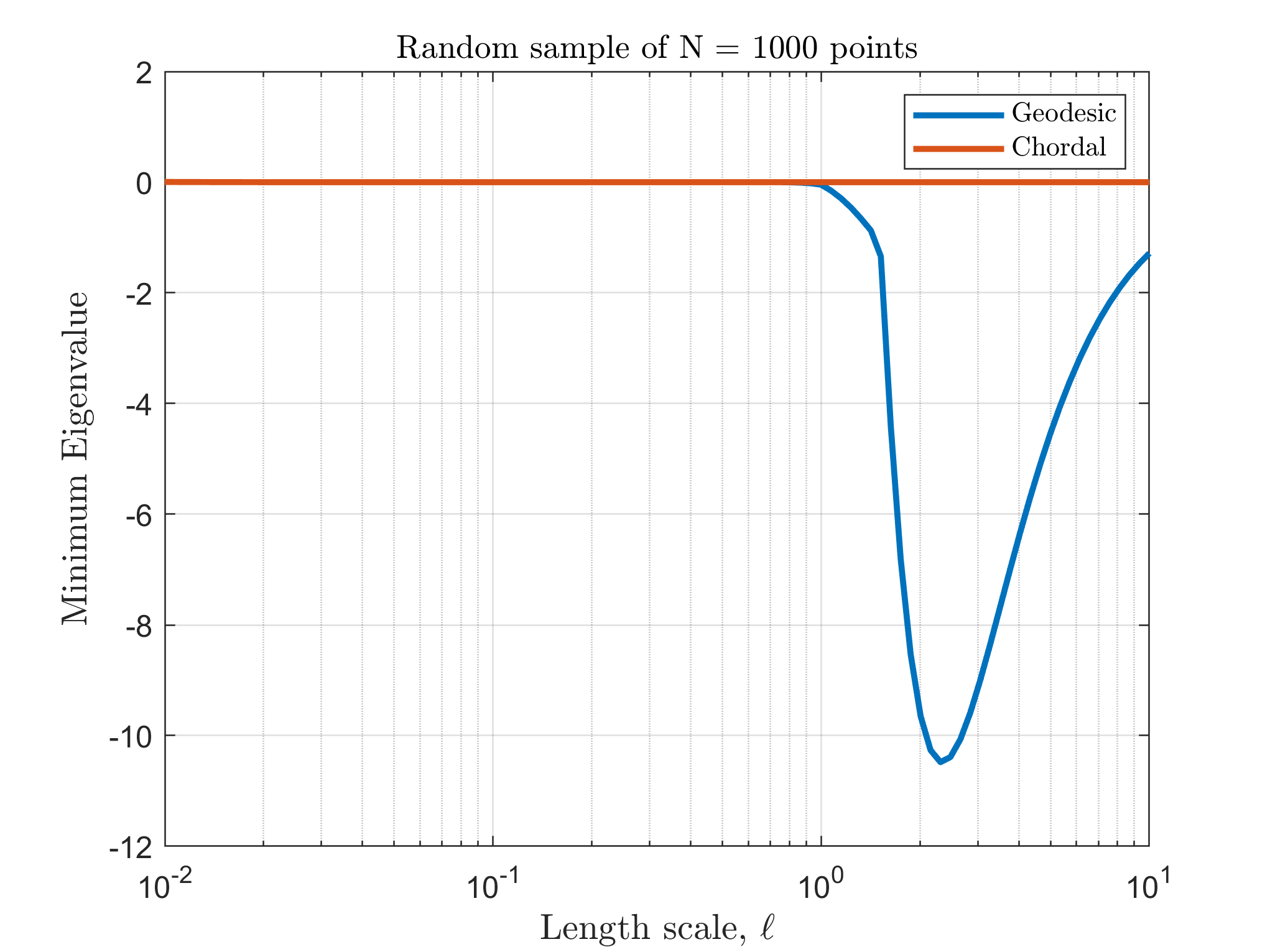}} 
	\subfloat{\includegraphics[width=0.5\textwidth]{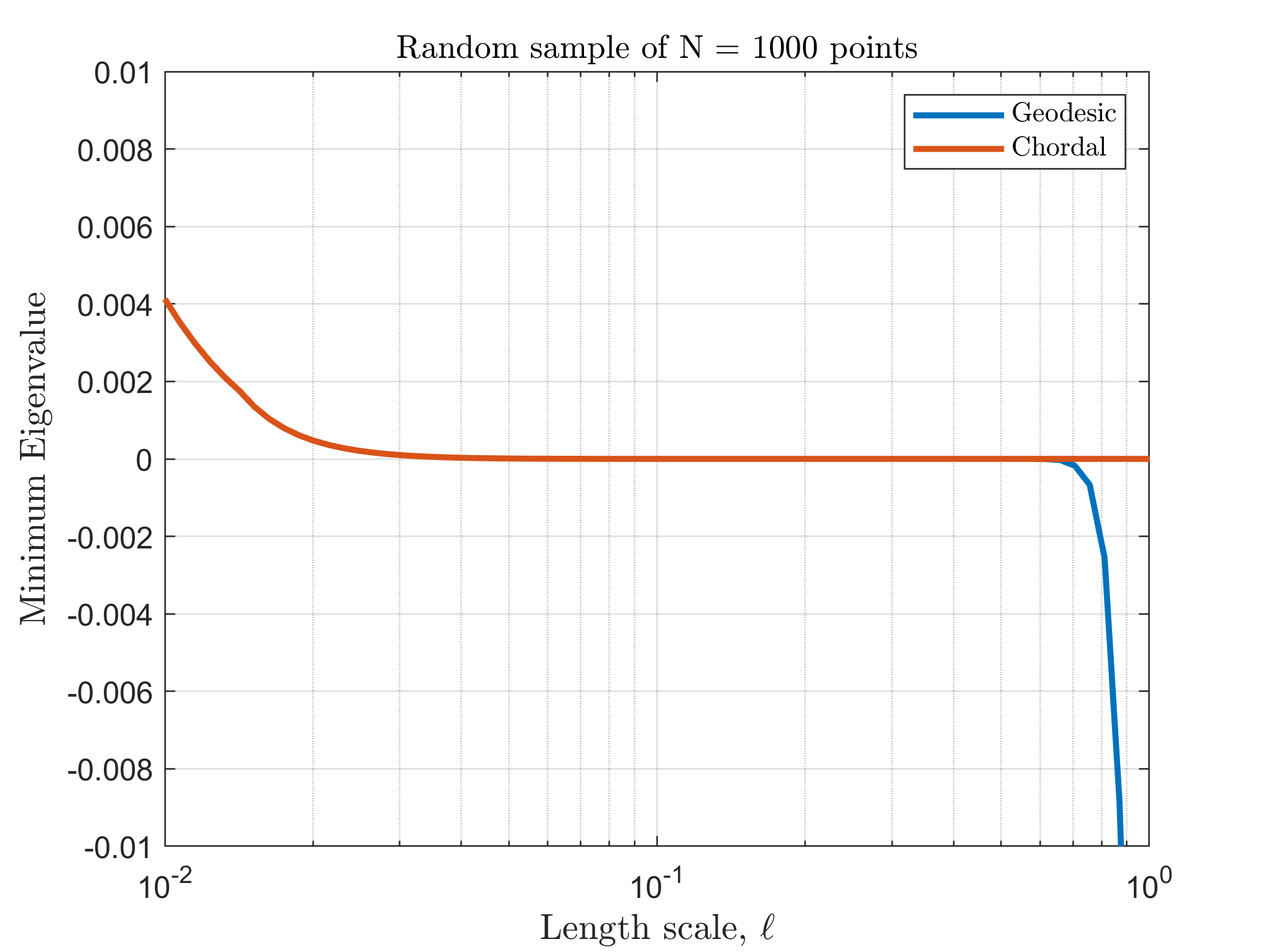}}\\
	\subfloat{\includegraphics[width=0.5\textwidth]{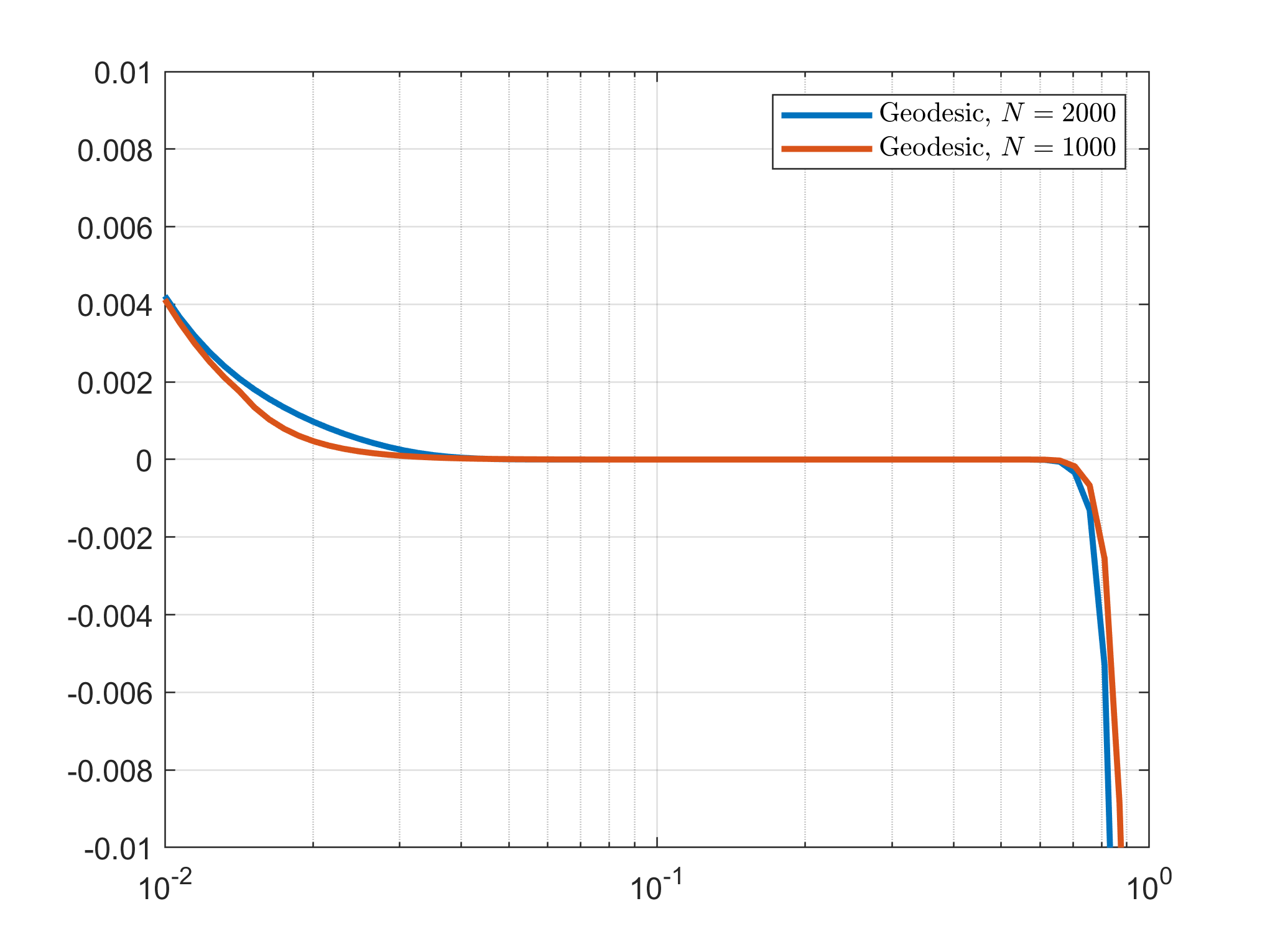}}
	\subfloat{\includegraphics[width=0.5\textwidth]{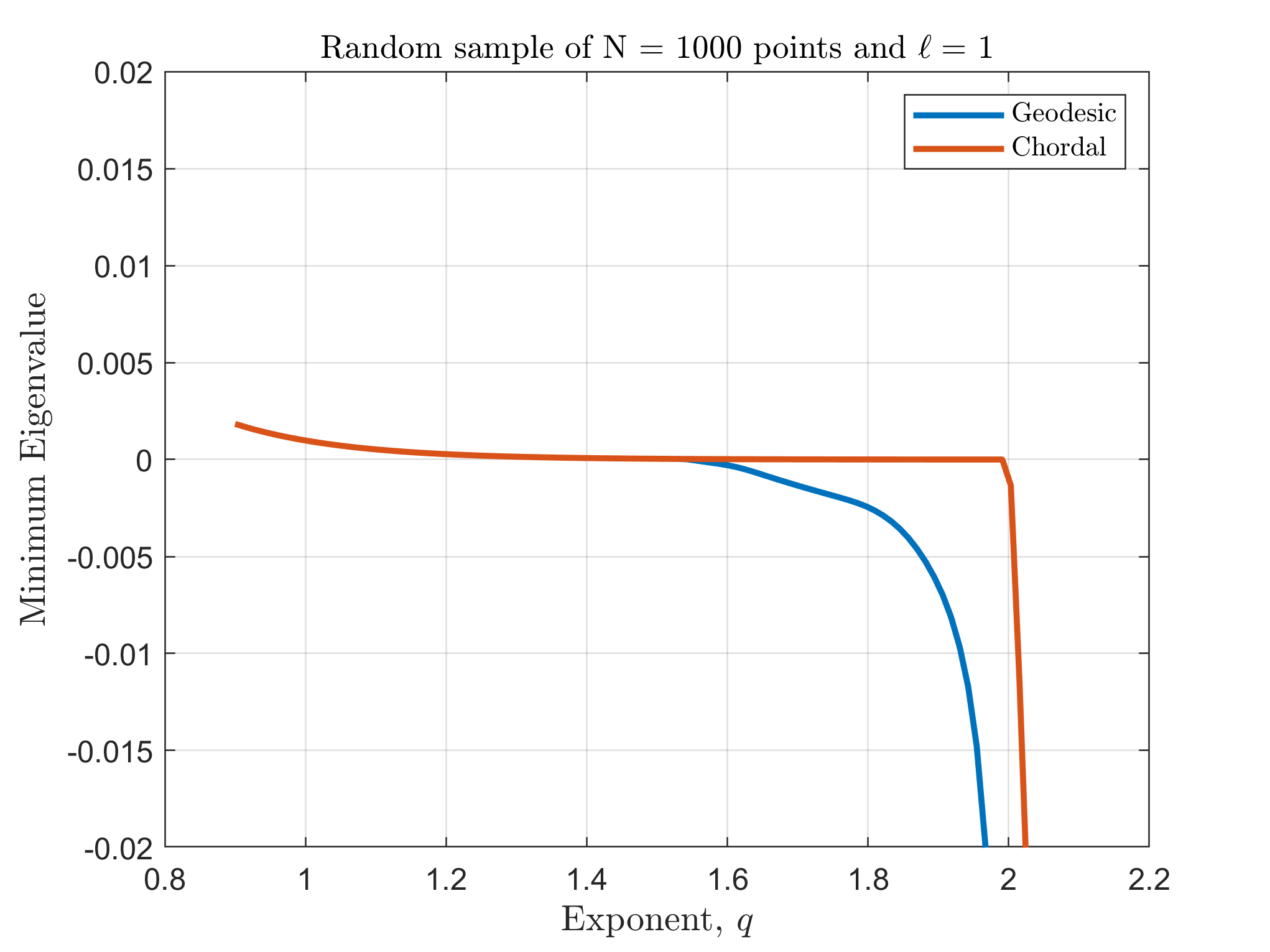}}
	\caption{Top left: The geodesic kernel \eqref{eq:sphere_kernel} fails to be positive definite as $\ell$ becomes large. As long as $\ell$ is small (which corresponds to $t$ being large), the kernel appears to be positive definite. Top right: A zoomed version. Bottom left: The positive definiteness appears to not depend too heavily on the number of points. Bottom right: All of the previous figures were with the exponent fixed at $q=2$. This figure varies the exponent. The chordal kernel appears to be positive-definite when $q\leq 2$ which is in agreement with the work of \citet{feragen2015geodesic}.}
\end{figure}
\end{example}

\section{Special Euclidean Groups}
\label{sec:euclidean}
We now move onto the more involved examples where $M=\mathbb{R}^n$ and $\Gcal = \SE(n)$, the special Euclidean group in $n$ dimensions.  $\Gcal$ acts on $M$ in in the standard fashion: let $(R,T)\in \SE(n)$ where $R\in \SO(n)$ and $T\in\mathbb{R}^n$,
$$(R,T).x = Rx+T,\quad x\in\mathbb{R}^n.$$
We will also choose the squared exponential kernel for $k:\mathbb{R}^n\times\mathbb{R}^n\to\mathbb{R}$:
\begin{equation}
k(x,y) = \sigma^2\exp\left(\frac{-\lVert x-y\rVert_n^2}{2\ell^2}\right),
\end{equation}
for some fixed real parameters $\sigma$ and $\ell$, and $\lVert\cdot\rVert_n$ is the standard Euclidean norm on $\mathbb{R}^n$. In order to determine the gradient flow \eqref{eq:gradient_flow}, we need to compute the infinitesimal generators of the action $\SE(n)\curvearrowright\mathbb{R}^n$ as well as decide on a left-invariant metric for $\se(n)=\Lie(\SE(n))$.
\subsection{Infinitesimal Generator}
For a fixed $\xi\in\se(n)$, it has the form $\xi=(\omega,v)$ where $\omega\in\so(n)$ and $v\in\mathbb{R}^n$. Because the infinitesimal generator map $\mathfrak{g}\to \mathfrak{X}(M)$ is a Lie algebra homomorphism (where $\mathfrak{X}(M)$ is the space of all vector fields over $M$, see \textsection 27 of \citealt{diffGeometry}), we see that $\xi_M = \omega_M+v_M$. A straight forward computation leads to
\begin{equation}
\xi_{\mathbb{R}^n}x = \hat{\omega}x+v,\quad \hat{\omega}\in \mathrm{skew}(n),\quad v\in\mathbb{R}^n.
\end{equation}
\subsection{Metric}
We need to choose a metric on $\se(n)$ to turn $dF_h$ into $\nabla F_h$. For this example, we will take a multiple of the Killing form on $\so(n)$ and the Euclidean norm on $\mathbb{R}^n$. That is,
\begin{equation}\label{eq:metric}
\langle (\omega,v),(\eta,u)\rangle_{\se(n)} =  b^2 \cdot\langle v,u\rangle_n - a^2\left(\frac{n-2}{2}\right)\cdot \tr(\omega\eta),
\end{equation}
where $\langle\cdot,\cdot\rangle_n$ is the standard Euclidean inner product on $\mathbb{R}^n$ (see \citet{park1994kinematic} for a discussion in three dimensions), and $a$ and $b$ are tuning parameters. The reason for the $(2-n)/2$ term is because with this normalization (with $a=1$) the skew matrices $E_{ij}-E_{ji}$ are orthonormal. Here $E_{ij}$ denotes the matrix with all zeros except for a $1$ in the $(i,j)$-coordinate.
\subsection{Calculating the Gradient}
Before we find the gradient, let us first determine its differential (at the identity for simplicity). 
\begin{equation*}
\begin{split}
dF_e(\xi) &= \sum_{\substack{x_i\in X\\ z_j\in Z}} \, c_{ij} \cdot d\left(\tilde{k}_{x_i}\right)_{z_j}\left(-\xi_M(z_j)\right) \\
&= \sum_{\substack{x_i\in X\\ z_j\in Z}} \, c_{ij} \cdot 
\frac{1}{\ell^2}k(x_i,z_j)\cdot \langle (x_i-z_j), \left( -\hat{\omega}z_j-v\right)\rangle_n.
\end{split}
\end{equation*}
To turn $dF_e$ into $\nabla F_e$, we will compute $\nabla_\omega F_e$ and $\nabla_v F_e$ separately:
\begin{equation*}
\begin{split}
-a^2\left(\frac{n-2}{2}\right)\cdot\tr\left[(\nabla_\omega F_e)\hat{\omega}\right] &= \sum_{\substack{x_i\in X\\ z_j\in Z}} \, c_{ij} \cdot 
\frac{1}{\ell^2}k(x_i,z_j)\cdot \langle (x_i-z_j) , \left( -\hat{\omega}z_j\right)\rangle_n,\\
b^2\langle (\nabla_vF_e),v\rangle_n &= \sum_{\substack{x_i\in X\\ z_j\in Z}} \, c_{ij} \cdot 
\frac{1}{\ell^2}k(x_i,z_j)\cdot \langle (x_i-z_j) , \left( -v\right)\rangle_n.
\end{split}
\end{equation*}
To solve for this in coordinates, we will let $\{e^m\}_{m=1}^n$ be the standard orthonormal basis for $(\mathbb{R}^n,\langle\cdot,\cdot\rangle_n)$ and $\{ J^{pq} \}_{p<q} := \{E_{pq}-E_{qp}\}_{p<q}$ be as above. Then, the gradient becomes:
\begin{equation*}
\begin{split}
\left(\nabla_\omega F_e\right)^{pq} &= \frac{1}{a^2\ell^2} \sum_{\substack{x_i\in X\\ z_j\in Z}} \, c_{ij} \cdot 
k(x_i,z_j)\cdot \langle (x_i-z_j) , \left( -J^{pq}z_j\right)\rangle_n, \\
\left( \nabla_v F_e\right)^m &= \frac{1}{b^2\ell^2} \sum_{\substack{x_i\in X\\ z_j\in Z}} \, c_{ij} \cdot 
k(x_i,z_j)\cdot \langle (x_i-z_j) , \left( -e^m\right)\rangle_n.
\end{split}
\end{equation*}
The above can be simplified by computing the inner product on the right hand side:
\begin{equation*}
\begin{split}
\left(\nabla_\omega F_e\right)^{pq} &= \frac{1}{a^2\ell^2} \sum_{\substack{x_i\in X\\ z_j\in Z}} \, c_{ij} \cdot 
k(x_i,z_j)\cdot \left(x_i^pz_j^q-x_i^qz_j^p\right), \\
\left( \nabla_v F_e\right)^m &= \frac{1}{b^2\ell^2} \sum_{\substack{x_i\in X\\ z_j\in Z}} \, c_{ij} \cdot 
k(x_i,z_j)\cdot \left(z_j^m-x_i^m\right).
\end{split}
\end{equation*}
Likewise, to translate away from the origin, we note that if $h=(R,T)\in\SE(n)$, $(\ell_h)_*(\hat{\omega},v) = (R\hat{\omega},Rv)$. Then, if we express the gradient as $\nabla F_h = (\ell_h)_*(\hat{\omega},v) = (R\hat{\omega},Rv)$, we get the following expression for $(\hat{\omega},v)\in\se(n)$:
\begin{equation}\label{eq:flow}
\begin{split}
\hat{\omega}^{pq} &= \frac{1}{a^2\ell^2} \sum_{\substack{x_i\in X\\ z_j\in Z}} \, c_{ij} \cdot 
k(x_i,\tilde{z}_j)\cdot \left(x_i^p\tilde{z}_j^q-x_i^q\tilde{z}_j^p\right), \\
 v^m &= \frac{1}{b^2\ell^2} \sum_{\substack{x_i\in X\\ z_j\in Z}} \, c_{ij} \cdot 
k(x_i,\tilde{z}_j)\cdot \left(\tilde{z}_j^m-x_i^m\right),
\end{split}
\end{equation}
where $\tilde{z}_j = h^{-1}z_j = R^{\transpose} z_j-R^{\transpose} T$.

The Hessian calculation will be postponed until Section \ref{sec:RgeomSEn}.

\section{Analysis and Verification of Idea}
\label{sec:verification}
It is important to take a moment and examine when solving Problem \ref{prob:problem2} actually causes the clouds to be best aligned. It is of course impossible to perfectly align two non-identical clouds. Presented below is a discussion of the question: when the two clouds \textit{are} identical, when does the identity element in the group maximize \eqref{eq:max2}?

Suppose that $Z=X$ and $\ell_Z=\ell_X$. Then the identity ideally should be a fixed-point of \eqref{eq:gradient_flow}. This leads to the following, which merely checks the consistency of the framework.
\begin{proposition}
	Assume that for all $h\in \Gcal$ and $x\in M$, $k(hx,hx)= k(x,x)$. Then the identity is a global maximum of $F$.
\end{proposition}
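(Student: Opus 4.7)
The plan is to combine Cauchy--Schwarz in the RKHS with the norm-preservation of the induced action on $\mathcal{H}$. First, specialize the inner product formula~\eqref{eq:scalar}: when $Z=X$ and $\ell_Z=\ell_X$, the quantity $F(e)$ collapses to
$$F(e) \;=\; \langle f_X,f_X\rangle_{\mathcal{H}} \;=\; \lVert f_X\rVert^2_{\mathcal{H}},$$
so the proposition reduces to showing $F(h)\le \lVert f_X\rVert^2_{\mathcal{H}}$ for every $h\in\mathcal{G}$, with equality attained at $h=e$.

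Next, I would apply Cauchy--Schwarz in $\mathcal{H}$ directly to the inner product that defines $F$:
$$F(h) \;=\; \langle f_X,\, h.f_X\rangle_{\mathcal{H}} \;\le\; \lVert f_X\rVert_{\mathcal{H}}\cdot \lVert h.f_X\rVert_{\mathcal{H}}.$$
If I can establish the norm identity $\lVert h.f_X\rVert_{\mathcal{H}} = \lVert f_X\rVert_{\mathcal{H}}$, then $F(h)\le \lVert f_X\rVert^2_{\mathcal{H}} = F(e)$, as desired. Note also that $h=e$ achieves equality trivially, so the bound is tight and the identity is indeed a global (not merely local) maximum.

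The substantive step is thus the norm identity. Using $h.f_X = f_{h^{-1}X}$ from Remark~\ref{rmk:hinv} and the definition of the RKHS inner product in~\eqref{eq:scalar}, I would expand
$$\lVert h.f_X\rVert^2_{\mathcal{H}} \;=\; \sum_{\substack{x_i\in X\\ x_j\in X}} \langle \ell_X(x_i),\ell_X(x_j)\rangle_{\mathcal{I}}\, k\bigl(h^{-1}x_i,\, h^{-1}x_j\bigr).$$
Under the standing isometry assumption of Section~\ref{sec:problem} (i.e.\ $\mathcal{G}\curvearrowright C^\infty(M)$ is an isometry in $\mathcal{H}$), each kernel entry $k(h^{-1}x_i,h^{-1}x_j)$ matches $k(x_i,x_j)$ term-by-term, recovering $\lVert f_X\rVert^2_{\mathcal{H}}$. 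The stated hypothesis $k(hx,hx)=k(x,x)$ handles the diagonal $i=j$ directly, while the off-diagonal invariance is forced by the isometry hypothesis already in effect for Problem~\ref{prob:problem2}.

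The main obstacle I expect is exactly this off-diagonal point: the hypothesis as written is only a diagonal condition and is strictly weaker than the full kernel invariance $k(hx,hy)=k(x,y)$ that the Cauchy--Schwarz argument requires. In the concrete settings of this paper (Gaussian kernels built from $\mathcal{G}$-invariant distances on $S^1$, $T^2$, $S^2$, or $\mathbb{R}^n$), this gap is vacuous: group isometries of the underlying manifold automatically preserve $k$ on all pairs. A clean writeup should therefore either strengthen the assumption to full bi-invariance of $k$, or explicitly invoke the Problem~\ref{prob:problem2} isometry hypothesis to propagate diagonal invariance to the off-diagonal entries.
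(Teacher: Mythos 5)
Your argument is essentially identical to the paper's proof: Cauchy--Schwarz in $\mathcal{H}$ followed by the norm identity $\lVert h.f_X\rVert_{\mathcal{H}}=\lVert f_X\rVert_{\mathcal{H}}$, established by expanding the norm via~\eqref{eq:scalar} and matching kernel entries term-by-term. Your side remark is also apt: the stated hypothesis $k(hx,hx)=k(x,x)$ is only a diagonal condition, and the paper's own proof likewise uses the full off-diagonal invariance $k(h^{-1}x_i,h^{-1}z_j)=k(x_i,z_j)$ implicitly, so either strengthening the hypothesis or invoking the isometry assumption (as the subsequent corollary on stationary kernels does) is the right way to make the step airtight.
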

\begin{proof}
	We have that
	\begin{equation*}
	F(h) = \langle f_X,h.f_X\rangle_{\Hcal},\quad F(e) = \lVert f_X\rVert_{\Hcal}^2 \geq 0.
	\end{equation*}
	Then using the Cauchy-Schwarz inequality we obtain:
	\begin{equation*}
	\begin{split}
		\left|\langle f_X,h.f_X\rangle_{\Hcal}\right| &\leq \lVert f_X\rVert_{\Hcal} \cdot \lVert h.f_X\rVert_{\Hcal},
	\end{split}
	\end{equation*}
	which is less than $F(e)$ provided that $\lVert h.f_X\rVert_{\Hcal}=\lVert f_X\rVert_{\Hcal}$. Computing this, we see that
	\begin{equation*}
	\begin{split}
		\lVert h.f_X\rVert_{\Hcal}^2 &= \sum_{x_i,z_j\in X} \, c_{ij}\cdot k\left(h^{-1}x_i,h^{-1}z_j\right)\\
		&= \sum_{x_i,z_j\in X} \, c_{ij}\cdot k\left(x_i,z_j\right) = \lVert f_X\rVert_{\Hcal}^2.
	\end{split}
	\end{equation*}
	Combining everything, we get that
	\begin{equation*}
	\left|F(h)\right| \leq \lVert f_X\rVert_{\Hcal}\cdot \lVert h.f_X\rVert_{\Hcal} \leq \lVert f_X\rVert_{\Hcal}^2 = F(e).
	\end{equation*}
\end{proof}
\begin{corollary}
	Suppose $k:M\times M\to\mathbb{R}$ is a stationary kernel~\citep[Page 82]{rasmussen2006gaussian}, that is $k(x,y) = k(d(x,y))$ for some distance function $d$. If $\Gcal$ acts isometrically on $M$, then the identity is a global maximum of $F$.
\end{corollary}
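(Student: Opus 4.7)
The plan is to deduce this directly from the preceding proposition by verifying its hypothesis under the stronger assumptions given here. Strictly speaking, the hypothesis in the proposition as stated ($k(hx,hx)=k(x,x)$) is used in the displayed calculation in the form $k(h^{-1}x_i,h^{-1}z_j)=k(x_i,z_j)$ for possibly distinct points, i.e.\ $\Gcal$-invariance of $k$ as a function of two arguments. So the first step is simply to check that a stationary kernel together with an isometric action yields this two-point invariance.

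Concretely, I would argue as follows. Let $h\in\Gcal$ and $x,y\in M$ be arbitrary. Since $\Gcal\curvearrowright M$ isometrically, we have $d(hx,hy)=d(x,y)$. Because $k$ is stationary, $k(x,y)=\tilde{k}(d(x,y))$ for some one-variable function $\tilde{k}$, hence
\begin{equation*}
k(hx,hy)=\tilde{k}(d(hx,hy))=\tilde{k}(d(x,y))=k(x,y).
\end{equation*}
In particular $k(hx,hx)=k(x,x)$, so the hypothesis of the preceding proposition is satisfied, and the proposition yields that the identity is a global maximum of $F$.

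There is no real obstacle here; the only subtlety is the mild gap that the proposition's hypothesis as written is weaker than what its proof actually uses, and one should invoke the full $\Gcal$-invariance $k(hx,hy)=k(x,y)$ that isometry plus stationarity provides. This is exactly the property which makes the calculation $\lVert h.f_X\rVert_{\Hcal}^2=\lVert f_X\rVert_{\Hcal}^2$ in the proposition go through when one substitutes $h^{-1}x_i$ and $h^{-1}z_j$ into $k$, so the corollary follows immediately.
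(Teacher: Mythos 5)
Your proof is correct and follows essentially the same route the paper intends: the corollary is stated without proof precisely because stationarity plus an isometric action gives $k(hx,hy)=k(x,y)$, which is what the proposition's computation of $\lVert h.f_X\rVert_{\Hcal}$ requires. Your observation that the proposition's stated hypothesis $k(hx,hx)=k(x,x)$ is nominally weaker than the two-point invariance its proof actually uses is accurate and worth noting, but it does not change the argument.
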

\begin{corollary}
	The identity is a global maximum for the $\SE(n)$ case.
\end{corollary}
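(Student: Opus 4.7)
The plan is simply to verify the hypotheses of the preceding corollary, namely that the kernel is stationary and that $\SE(n)$ acts isometrically on $\mathbb{R}^n$. Both are immediate from the setup in Section~\ref{sec:euclidean}, so this corollary should essentially be a one-step consequence rather than an independent argument.

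First, I would observe that the chosen kernel
\begin{equation*}
k(x,y) = \sigma^2 \exp\left( \frac{-\lVert x-y\rVert_n^2}{2\ell^2}\right)
\end{equation*}
depends on $x$ and $y$ only through the Euclidean distance $d(x,y)=\lVert x-y\rVert_n$, so $k$ is stationary in the sense required by the previous corollary. Second, I would check that the standard action $(R,T).x = Rx+T$ preserves this distance: for any $(R,T)\in\SE(n)$ and $x,y\in\mathbb{R}^n$,
\begin{equation*}
\lVert (R,T).x - (R,T).y\rVert_n = \lVert R(x-y)\rVert_n = \lVert x-y\rVert_n,
\end{equation*}
where the last equality uses $R\in\SO(n)\subset O(n)$. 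Hence $\SE(n)$ acts isometrically on $\mathbb{R}^n$ with respect to the Euclidean metric.

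With both hypotheses verified, the conclusion follows directly from the preceding corollary applied to $M=\mathbb{R}^n$, $\Gcal=\SE(n)$, and the squared exponential kernel. There is no substantial obstacle here; the value of the statement is in pinning down that the general consistency check specializes cleanly to the main case of interest for this paper.
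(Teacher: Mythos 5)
Your proposal is correct and follows the paper's own argument: the paper's proof likewise deduces the result from the isometric action $\lVert hx-hy\rVert_n = \lVert x-y\rVert_n$ of $\SE(n)$ on $\mathbb{R}^n$, with the stationarity of the squared exponential kernel implicit from the preceding corollary. You have simply made both hypotheses explicit, which is fine.
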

\begin{proof}
	This follows from the fact that $\SE(n)$ acts on $\mathbb{R}^n$ isometrically, i.e. $\lVert hx-hy\rVert_n = \lVert x-y\rVert_n$.
\end{proof}

\begin{theorem}
        The maximizer of Problem~\ref{prob:problem2}, minimizes the angle between $f_X$ and $f_Z$.
\end{theorem}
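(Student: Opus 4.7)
The plan is to reduce the statement to a one-line trigonometric observation by exploiting the isometry hypothesis that was imposed when passing from Problem~\ref{prob:problem1} to Problem~\ref{prob:problem2}. Recall that in any real inner product space, and in particular in the RKHS $\mathcal{H}$, the angle $\theta(u,w)$ between two nonzero vectors $u,w$ is defined by
\begin{equation*}
    \cos\theta(u,w) = \frac{\langle u,w\rangle_{\mathcal{H}}}{\lVert u\rVert_{\mathcal{H}}\,\lVert w\rVert_{\mathcal{H}}},
\end{equation*}
and since $\theta\in[0,\pi]$, minimizing the angle is equivalent to maximizing its cosine. So the claim reduces to showing that the maximizer of $F(h)=\langle f_X,h.f_Z\rangle_{\mathcal{H}}$ also maximizes $\cos\theta(f_X,h.f_Z)$.

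First I would write
\begin{equation*}
    \cos\theta(f_X,h.f_Z) \;=\; \frac{\langle f_X,h.f_Z\rangle_{\mathcal{H}}}{\lVert f_X\rVert_{\mathcal{H}}\,\lVert h.f_Z\rVert_{\mathcal{H}}} \;=\; \frac{F(h)}{\lVert f_X\rVert_{\mathcal{H}}\,\lVert h.f_Z\rVert_{\mathcal{H}}},
\end{equation*}
and then invoke the standing hypothesis for Problem~\ref{prob:problem2}, namely that $\mathcal{G}\curvearrowright C^\infty(M)$ acts by isometries of $\mathcal{H}$. This gives $\lVert h.f_Z\rVert_{\mathcal{H}} = \lVert f_Z\rVert_{\mathcal{H}}$ for every $h\in\mathcal{G}$, so the denominator $\lVert f_X\rVert_{\mathcal{H}}\lVert f_Z\rVert_{\mathcal{H}}$ is a positive constant independent of $h$. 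Consequently $\cos\theta(f_X,h.f_Z)$ and $F(h)$ differ only by a positive multiplicative constant, and therefore share the same argmax.

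Finally, I would combine the two steps: an $h^\ast\in\mathcal{G}$ solving Problem~\ref{prob:problem2} maximizes $F$, hence maximizes $\cos\theta(f_X,h.f_Z)$, hence minimizes the angle $\theta(f_X,h.f_Z)$ over $h$, which is the alignment statement of the theorem. (If $f_X=0$ or $f_Z=0$ the inner product vanishes identically and the angle is undefined; this degenerate case should be excluded in the statement, which is the only subtlety worth flagging.) There is no serious obstacle here: the entire content of the theorem is the isometry-induced cancellation of the norm in the denominator, which is why the problem could be phrased as an inner-product maximization in the first place rather than a normalized cross-correlation.
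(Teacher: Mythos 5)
Your proof is correct, and it takes a cleaner route than the paper's. The paper argues via a Cauchy--Schwarz chain: writing $f_Z^\ast = h^\ast.f_Z$ for the maximizer $h^\ast$, it uses $\lvert\langle f_X,f_Z\rangle\rvert \le \lvert\langle f_X,f_Z^\ast\rangle\rvert \le \lVert f_X\rVert_{\Hcal}\lVert f_Z^\ast\rVert_{\Hcal} \le \lVert f_X\rVert_{\Hcal}\lVert f_Z\rVert_{\Hcal}$ together with an asserted norm inequality $\lVert f_Z^\ast\rVert_{\Hcal}\le\lVert f_Z\rVert_{\Hcal}$ (which under the standing isometry hypothesis is in fact an equality), and concludes only the comparison $\theta^\ast\le\theta$ between the angle at $h^\ast$ and the angle at the identity (with cosines defined through absolute values, so $\theta\in[0,\pi/2]$). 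You instead exploit the isometry directly: $\lVert h.f_Z\rVert_{\Hcal}=\lVert f_Z\rVert_{\Hcal}$ makes the denominator of $\cos\theta(f_X,h.f_Z)$ independent of $h$, so $F(h)$ and $\cos\theta(f_X,h.f_Z)$ share the same argmax, and monotonicity of cosine on $[0,\pi]$ finishes the argument. This buys you a genuinely stronger conclusion --- the maximizer of $F$ minimizes the angle over \emph{all} $h\in\Gcal$, not merely relative to the untransformed $f_Z$ --- while dispensing with Cauchy--Schwarz and with the unjustified-looking norm inequality; it also makes transparent that the whole content of the theorem is the isometry-induced cancellation of the norm. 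Your flag of the degenerate case $f_X=0$ or $f_Z=0$ (where the angle is undefined) is a reasonable technical caveat that the paper omits.
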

\begin{proof}
        Suppose $h^* \in \Gcal$ is the maximizer of Problem~\ref{prob:problem2}. Then $\langle f_X, f_Z^* \rangle \geq \langle f_X, f_Z \rangle$ and $\lVert f_Z^* \rVert_{\Hcal} \leq \lVert f_Z \rVert_{\Hcal}$. Using Cauchy-Schwarz inequality:
    \begin{equation}
        \nonumber 0 \leq \lvert \langle f_X, f_Z \rangle \rvert \leq \lvert \langle f_X, f_Z^* \rangle \rvert \leq \lVert f_X \rVert_{\Hcal} \lVert f_Z^* \rVert_{\Hcal} \leq \lVert f_X \rVert_{\Hcal} \lVert f_Z \rVert_{\Hcal}
    \end{equation}
    dividing by $\lVert f_X \rVert_{\Hcal} \lVert f_Z \rVert_{\Hcal}$ and replacing $\lVert f_Z \rVert_{\Hcal}$ in the denominator by $\lVert f_Z^* \rVert_{\Hcal}$:
    \begin{equation}
        \nonumber 0 \leq \cos(\theta) \leq \frac{\lvert \langle f_X, f_Z^* \rangle \rvert}{\lVert f_X \rVert_{\Hcal} \lVert f_Z^* \rVert_{\Hcal}} \leq \frac{\lVert f_Z^*\rVert_{\Hcal}}{\lVert f_Z^* \rVert_{\Hcal}} \leq 1
    \end{equation} 
    \begin{equation}
        \nonumber 0 \leq \cos(\theta) \leq \cos(\theta^*) \leq 1
    \end{equation}
    \begin{equation}
        \nonumber 0 \leq \theta^* \leq \theta \leq \frac{\pi}{2}
    \end{equation}
where $\cos(\theta) = \lvert \langle f_X, f_Z \rangle \rvert / (\lVert f_X \rVert_{\Hcal} \lVert f_Z \rVert_{\Hcal})$.
\end{proof}

\section{Integrating the Flow for the Special Euclidean Group}
\label{sec:senintegration}

Now that we know the direction for the flow, what remains is to determine a way to integrate the flow and to determine a reasonable step size. We integrate using the Lie exponential map to preserve the group structure and the step size is calculated using a $4^{th}$-order Taylor approximation in a line search algorithm.
\subsection{Integrating}
We will take care that in integrating \eqref{eq:gradient_flow}, our trajectories will remain on $SE(n)$. This is slightly problematic because integrating is an additive process and $SE(n)$ is not closed under addition. To address this, we note that if in \eqref{eq:gradient_flow}, $(\hat{\omega},v)$ is constant in $\se(n)$ (i.e. $\nabla F_h$ is a left-invariant vector field) the solution is merely
\begin{equation*}
(R(t),T(t)) = (R_0,T_0)\exp(t(\hat{\omega},v)),
\end{equation*}
where $\exp:\se(n)\to\SE(n)$ is the Lie exponential map (which is merely the matrix exponential). We will exploit this by assuming that $\hat{\omega}$ and $v$ are constant over each time step.
\begin{align*}
\nonumber \begin{bmatrix}
R_{k+1} & T_{k+1} \\ 0 & 1
\end{bmatrix} &= \begin{bmatrix}
R_{k} & T_{k} \\ 0 & 1
\end{bmatrix} \cdot \exp \begin{bmatrix}
t\hat{\omega} & t v \\ 0 & 0
\end{bmatrix} \\
\nonumber &= \begin{bmatrix}
R_k & T_k \\ 0 & 1
\end{bmatrix}\begin{bmatrix}
\Delta R & \Delta T \\ 0 & 1
\end{bmatrix} \\
&= \begin{bmatrix}
R_k\Delta R & R_k\Delta T + T_k \\
0 & 1
\end{bmatrix}.
\end{align*}
Explicit formulas for $\Delta R$ and $\Delta T$ will be discussed in \textsection\ref{subsec:se3} for the special case where $n=3$.
Combining all of this, we get our integration step to be
\begin{align}\label{eq:newstep}
\nonumber R_{k+1} &= R_k \Delta R \\
T_{k+1} &= R_k\Delta T + T_k.
\end{align}
\subsection{Step Size}\label{sec:step} 
We can use \eqref{eq:flow} to point in  the direction of maximal growth and \eqref{eq:newstep} to find the updateded element in $\SE(n)$. However, we currently have no intelligent way of choosing $t$. We will proceed by a Taylor approximation of the solution curve. If we let $G(t) := F(h\exp(t\xi))$, then we want to find the value of $t$ that maximizes $G$. We compute a $4^{th}$-order Taylor expansion of $G(t)$ about $t=0$ and determine the value of $t$ that maximizes this polynomial.
\begin{equation}\label{eq:fourth}\begin{split}
G(t) \approx \sumxz \, c_{ij} \cdot e^{\alpha_{ij}} \left\{ 
g^1_{ij}t + g^2_{ij}t^2 + g^3_{ij}t^3 + g^4_{ij}t^4 \right\},
\end{split}\end{equation}
where 
\begin{equation*}
\begin{split}
g^1_{ij} &= \beta_{ij} \\
g^2_{ij} &= \gamma_{ij}+\frac{1}{2}\beta_{ij}^2 \\
g^3_{ij} &= \delta_{ij}+\beta_{ij}\gamma_{ij}+\frac{1}{6}\beta_{ij}^3 \\
g^4_{ij} &= \varepsilon_{ij} + \beta_{ij}\delta_{ij}+\frac{1}{2}\beta_{ij}^2\gamma_{ij} + \frac{1}{2}\gamma_{ij}^2 + \frac{1}{24}\beta_{ij}^4 \\
\alpha_{ij} &= \frac{-1}{2\ell^2}\lVert x_i-z_j\rVert_n \\
\beta_{ij} &= \frac{-1}{\ell^2} \langle \hat{\omega}z_j+v,x_i-z_j\rangle_n \\
\gamma_{ij} &= \frac{-1}{2\ell^2}\left( \lVert \hat{\omega}z_j+v\rVert_n^2 + 2\langle \hat{\omega}^2z_j+v,x_i-z_j\rangle_n\right) \\
\delta_{ij} &= \frac{1}{\ell^2}\left( \langle -\hat{\omega}z_j-v,\hat{\omega}^2z_j +\hat{\omega}v\rangle_n + \langle -\hat{\omega}^3z_j-\hat{\omega}^2v,x_i-z_j\rangle_n\right) \\
\varepsilon_{ij} &= \frac{-1}{2\ell^2}\left( \lVert\hat{\omega}^2z_j+\hat{\omega}v\rVert_n^2 +2\langle \hat{\omega}z_j+v,\hat{\omega}^3z_j +\hat{\omega}^2v\rangle_n + 2\langle \hat{\omega}^4z_j+\hat{\omega}^3v,x_i-z_j\rangle_n\right)
\end{split}
\end{equation*}
The ``optimal'' step size is then taken to be the value of $t>0$ that maximizes the quartic \eqref{eq:fourth}.

\begin{figure}[t]
    \centering
    \subfloat
    {\includegraphics[width=0.35\columnwidth]{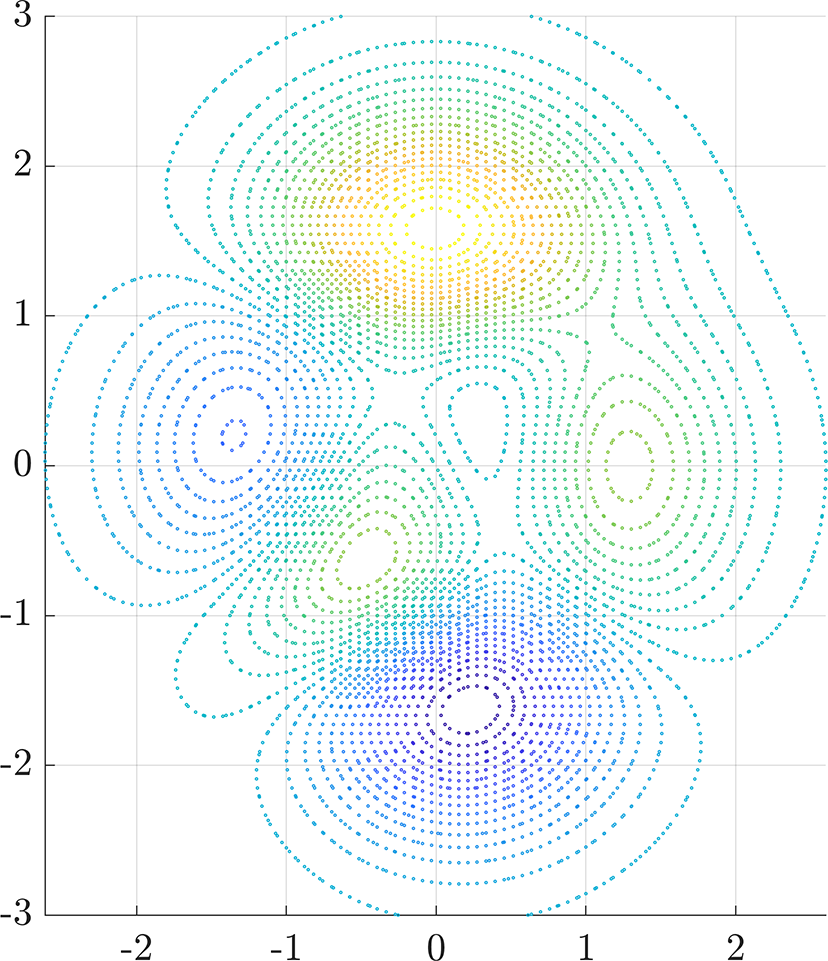}}~
    \subfloat
    {\includegraphics[width=0.375\columnwidth]{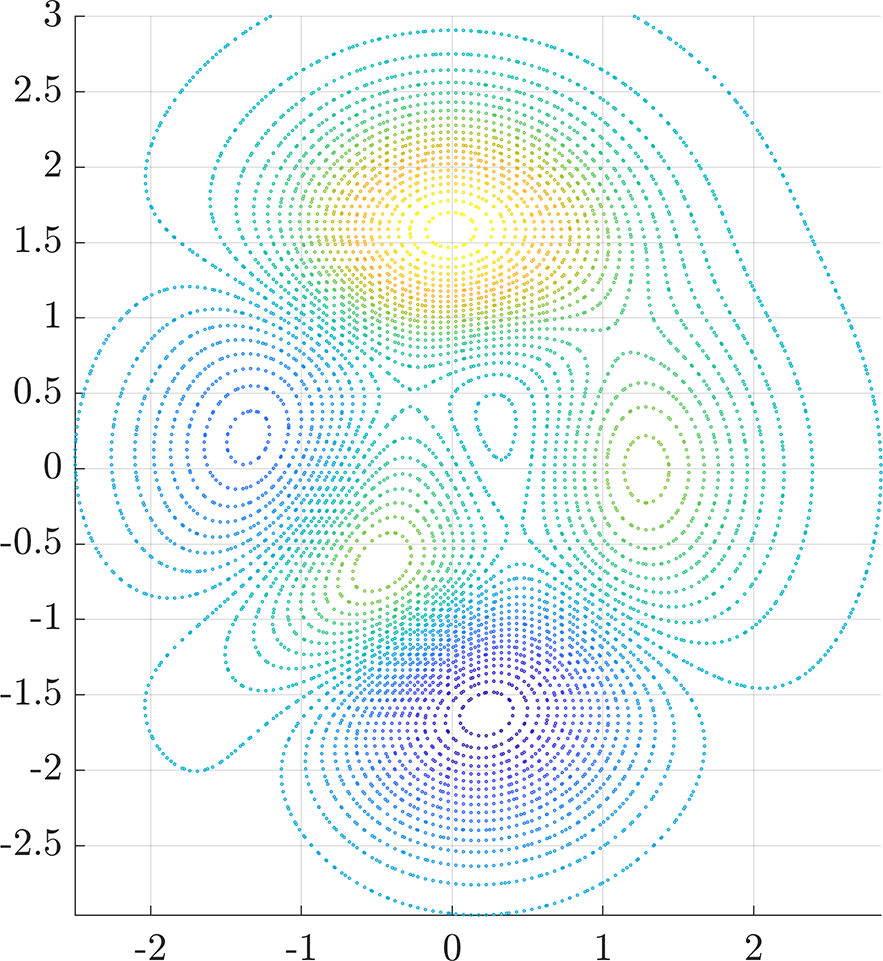}}
    \caption{The images above show $X$ and $Z$ as used in Example \ref{ex:se2_registration}. The cloud $X$ contains 5282 points while $Z$ contains 6865 points.}
    \label{fig:clouds_se2_images}
\end{figure}
\begin{figure}[t]
    \centering
    \subfloat
    {\includegraphics[width=0.35\columnwidth]{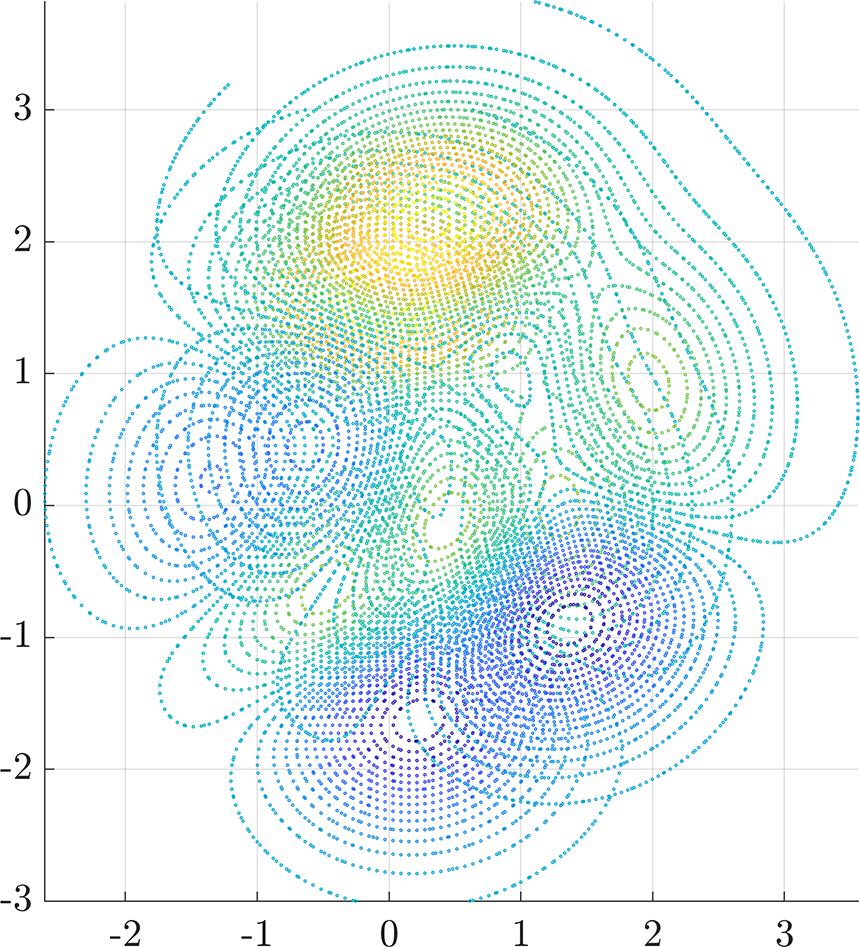}}~
    \subfloat
    {\includegraphics[width=0.35\columnwidth]{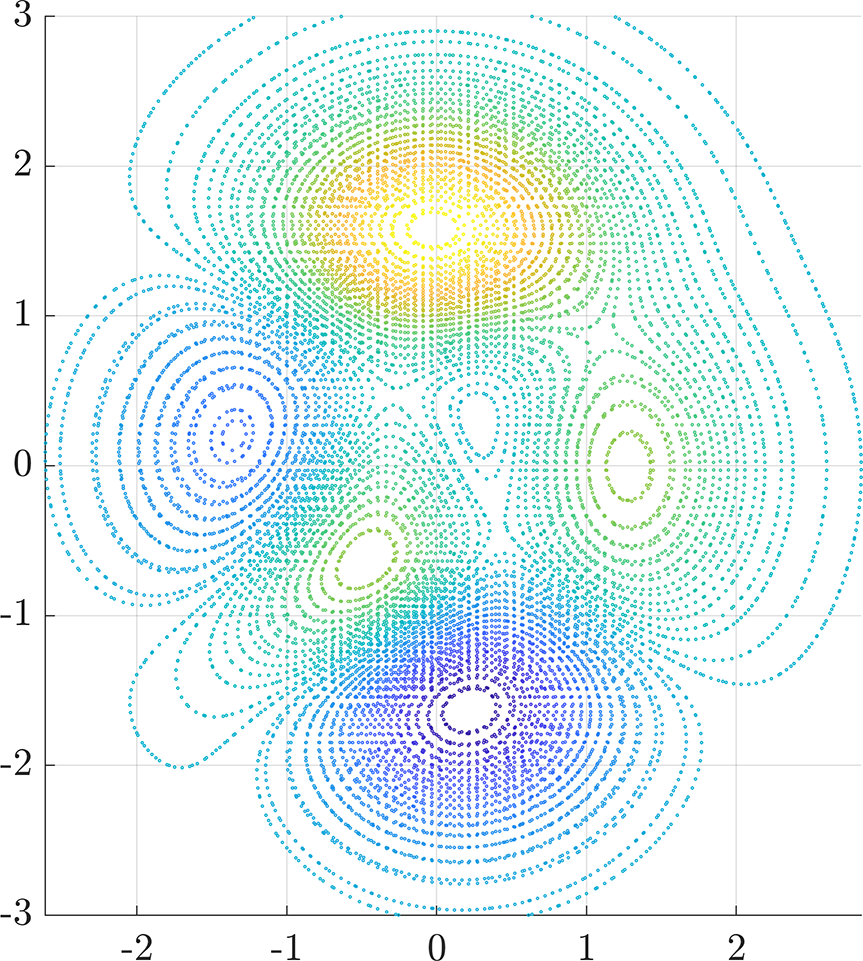}}
    \caption{Left: The clouds $X$ and $A\cdot Z$. Right: The clouds $X$ and $T\cdot A\cdot Z$.}
    \label{fig:clouds_se2_registration}
\end{figure}

\subsection{Special Case: $n=2$}
When we take the special case of $\SE(2)$, the gradient \eqref{eq:flow} takes the special form:
\begin{equation}\begin{split}
\omega &= \frac{1}{a^2\ell^2} 
\sum_{\substack{x_i\in X\\ z_j\in Z}} c_{ij}
k(x_i,h^{-1}z_j)\left(x_i\tilde{\times}(h^{-1}z_j)\right) \\
v &= \frac{1}{b^2\ell^2}\, \sumxz c_{ij} k(x_i,h^{-1}z_j)\left( h^{-1}z_j-x_i\right),
\end{split}\end{equation}
where $\tilde\times$ is the ``two-dimensional cross product,''
\begin{equation*}
\begin{bmatrix} x_1 \\ x_2 \end{bmatrix}
\tilde{\times} \begin{bmatrix}
y_1 \\ y_2 \end{bmatrix} = 
x_1y_2 - x_2y_1.
\end{equation*}
Finally, the exponential map for $\SE(2)$ can be solved exactly. 
\begin{equation}
    \begin{split}
        \Delta R &= \begin{bmatrix}
        \cos\omega & -\sin\omega \\
        \sin\omega & \cos\omega
        \end{bmatrix}, \\
        \Delta T &= \frac{1}{\omega} \begin{bmatrix}
        \sin\omega & \cos\omega-1 \\
        1-\cos\omega & \sin\omega
        \end{bmatrix} v.
    \end{split}
\end{equation}

\begin{example}\label{ex:se2_registration}
We perform a bird's-eye view map of registration. For the purposes of this example, we choose $X$ to be a contour plot of Matlab's \texttt{peaks}(100) while $Z$ is of \texttt{peaks}(120). Specifically, $X$ is made up of $40$ contour lines while $Z$ is made up of $43$ lines. For both images, the coordinates are constrained in the following way: $x,y\in [-3,3]$. The two images are shown in Figure \ref{fig:clouds_se2_images}.

As these images are initially aligned, we first perturb them and attempt to realign them. The initial perturbation is give by $A\in\SE(2)$ while the algorithm determines $T\in\SE(2)$. In this sense, we want $T\cdot A = \mathrm{Id}$. The values of $A$, $T$, $T\cdot A$, and $\lVert T\cdot A\rVert_F$ are shown in \eqref{eq:results_se2_registration}.

\begin{equation}\label{eq:results_se2_registration}
    \begin{split}
        A &= \begin{bmatrix}[r]
        0.9654 &  -0.2607  &  0.7250 \\
        0.2607  &  0.9654  &  0.6074 \\
         0      &   0  &  1.0000
        \end{bmatrix}, \\
        T &= \begin{bmatrix}[r]
        0.9659  &  0.2591 &  -0.8698 \\
        -0.2591  &  0.9659  &  -0.4049 \\
         0      &   0  &  1.0000
        \end{bmatrix}, \\
        T\cdot A &= \begin{bmatrix}[r]
        1.0000  & -0.0017 &  -0.0122 \\
        0.0017  &  1.0000  & -0.0060 \\
         0  &       0  &  1.0000
        \end{bmatrix}, \\
        \lVert T\cdot A\rVert_F &= 0.0138.
    \end{split}
\end{equation}

The parameters used in this registration are included in Table \ref{tab:parameters_se2}. The initial and final images are in Figure \ref{fig:clouds_se2_registration}. For each cloud, the color information is generated based on the level set numbers.
\end{example}

\begin{table}[t]
\begin{center}
\footnotesize
\begin{tabular}{l r}
\toprule
    Parameters & Value \\
        \midrule
        transformation convergence threshold $\epsilon$ & $1\mathrm{e}{-4}$ \\
        gradient norm convergence threshold $\epsilon$ & $5\mathrm{e}{-4}$ \\
        kernel characteristic length-scale $\ell$ & $0.25$ \\
        kernel characteristic length-scale $\ell$ (iteration $> 3$)  & $0.15$ \\
        kernel characteristic length-scale $\ell$ (iteration $> 10$)  & $0.10$ \\
        kernel characteristic length-scale $\ell$ (iteration $> 20$)  & $0.05$ \\
        kernel signal variance $\sigma$ & $1$  \\
        minimum step length & $0.2$ \\
        color space inner product scale & $1\mathrm{e}{-5}$ \\
        kernel sparsification threshold & $1\mathrm{e}{-3}$ \\
\bottomrule
\end{tabular}
\caption{ Parameters used for evaluation of the bird's eye map registration. The kernel characteristic length-scale is chosen to be adaptive as the algorithm converges; intuitively, we prefer a large neighborhood of correlation for each point, but as the algorithm reaches the convergence reducing the local correlation neighborhood allows for faster convergence and better refinement.}
\label{tab:parameters_se2}
\end{center}
\squeezeup\squeezeup
\end{table}

\subsection{Special Case: $n=3$}
\label{subsec:se3}

When we restrict attention to $\SE(3)$, the gradient \eqref{eq:flow} takes a special form:
\begin{equation}\label{eq:3_gradient}
\begin{split}
\omega &= \frac{1}{a^2\ell^2} \, \sumxz c_{ij} k(x_i,h^{-1}z_j)\left(x_i\times(h^{-1}z_j)\right) \\
v &= \frac{1}{b^2\ell^2}\, \sumxz c_{ij} k(x_i,h^{-1}z_j)\left( h^{-1}z_j - x_i\right).
\end{split}
\end{equation}
Additionally, an explicit formula for the exponential map $\exp:\se(3)\to\SE(3)$ exists (see \cite{1104.1106} and \cite{rohan2013} for example). This gives an exact way to solve \eqref{eq:newstep}.
\begin{equation}\label{eq:step}
\begin{split}
\Delta R &= I + \left( \frac{\sin t\theta}{\theta}\right)\hat{\omega} + 
\left(\frac{1-\cos t\theta}{\theta^2}\right)\hat{\omega}^2,\\
\Delta T &= \left[ t I + \left( \frac{1-\cos t\theta}{\theta^2}\right)\hat{\omega} + \left(
\frac{t\theta - \sin t\theta}{\theta^3}\right)\hat{\omega}^2
\right] v.
\end{split}
\end{equation}
where $\theta = \lVert \omega \rVert_3$ with $\omega\in\mathbb{R}^3$ and $t$ is taken to maximize $G(t)$ in equation \eqref{eq:fourth}.

\section{Riemannian Geometry of the Special Euclidean Groups}
\label{sec:RgeomSEn}
This section deals with the geometry of $\SE(2)$ and $\SE(3)$. Specifically, the Riemannian exponential corresponding to the metric \eqref{eq:metric} (and the analogous version for $\SE(2)$) as well as the Hessians will be computed. Additionally, there will be a discussion on using Newton's method as an alternate update rule.
\subsection{$\SE(2)$}
We begin with the simpler case of $\SE(2)$. We will compute the exponential first and the Hessian second.
\subsubsection{The Riemann Exponential}
To compute the Riemann exponential, we will use the Euler-Poincar\'{e} equations as described by Theorem \ref{th:EP}. The Lagrangian will be $\mathcal{L}:\mathfrak{se}(2)\to\mathbb{R}$ where
\begin{equation*}
    \mathcal{L}(\omega;v_1,v_2) = \frac{1}{2}a^2\omega^2 + \frac{1}{2}b^2\left(v_1^2+v_2^2\right).
\end{equation*}
The Euler-Poincar\'{e} equations are then
\begin{equation*}
    \begin{split}
        \dot{\omega} &= 0, \\
        \dot{v}_1 &= \omega\cdot v_2, \\
        \dot{v}_2 &= -\omega\cdot v_1.
    \end{split}
\end{equation*}
These can be integrated to get a path $(\omega(t);v_1(t),v_2(t))\in\mathfrak{se}(2)$. However, this only defines a path in the Lie algebra and it needs to be lifted to the group. Doing so results in the following system of differential equations:
\begin{equation*}\begin{split}
    \dot{R}(t) &= R(t)\cdot\omega(t), \\
    \dot{T}(t) &= R(t)\cdot\begin{bmatrix}
    v_1(t) \\ v_2(t)
    \end{bmatrix}.
\end{split}\end{equation*}
The rotation part gives the same answer as the Lie exponential while cancellation occurs in the translation part:
\begin{equation*}
    \dot{T} = R(t) \cdot\begin{bmatrix}
    v_1(t) \\ v_2(t)
    \end{bmatrix} = R(t)\cdot R(t)^{-1} \cdot\begin{bmatrix}
    v_1(0) \\ v_2(0)
    \end{bmatrix}.
\end{equation*}
This provides us with the Riemann exponential (at the identity), $\mathrm{Exp}_0:\mathfrak{se}(2)\to\SE(2)$
\begin{equation*}
    \mathrm{Exp}_0\left( \begin{bmatrix}[r]
    0 & -\omega & v_1 \\
    \omega & 0 & v_1 \\
    0 & 0 & 0 \end{bmatrix} \right) = \begin{bmatrix}[r]
    \cos\omega & -\sin\omega & v_1 \\
    \sin\omega & \cos\omega & v_2 \\
    0 & 0 & 1
    \end{bmatrix}.
\end{equation*}
\begin{remark}\label{rmk:exp}
    It is interesting to compare the Riemann and Lie exponential for $\SE(2)$. When we restrict to $\SO(2)$, both agree. This follows from the fact that $\SO(2)$ is a compact Lie group and the metric restricted to this subgroup is bi-invariant. The second interesting phenomena is that the translation part is the identity for the Riemann case while the Lie case is more involved. The reason for this is that in the metric there are no cross terms intertwining rotation with translation, i.e. the Riemann exponential \textit{is} the Lie exponential for the group $\SO(2)\times\mathbb{R}^2$ as opposed to $\SE(2) = \SO(2)\ltimes\mathbb{R}^2$.
\end{remark}
\begin{figure*}[t]
    \centering
    \subfloat{\includegraphics[width=0.3\columnwidth]{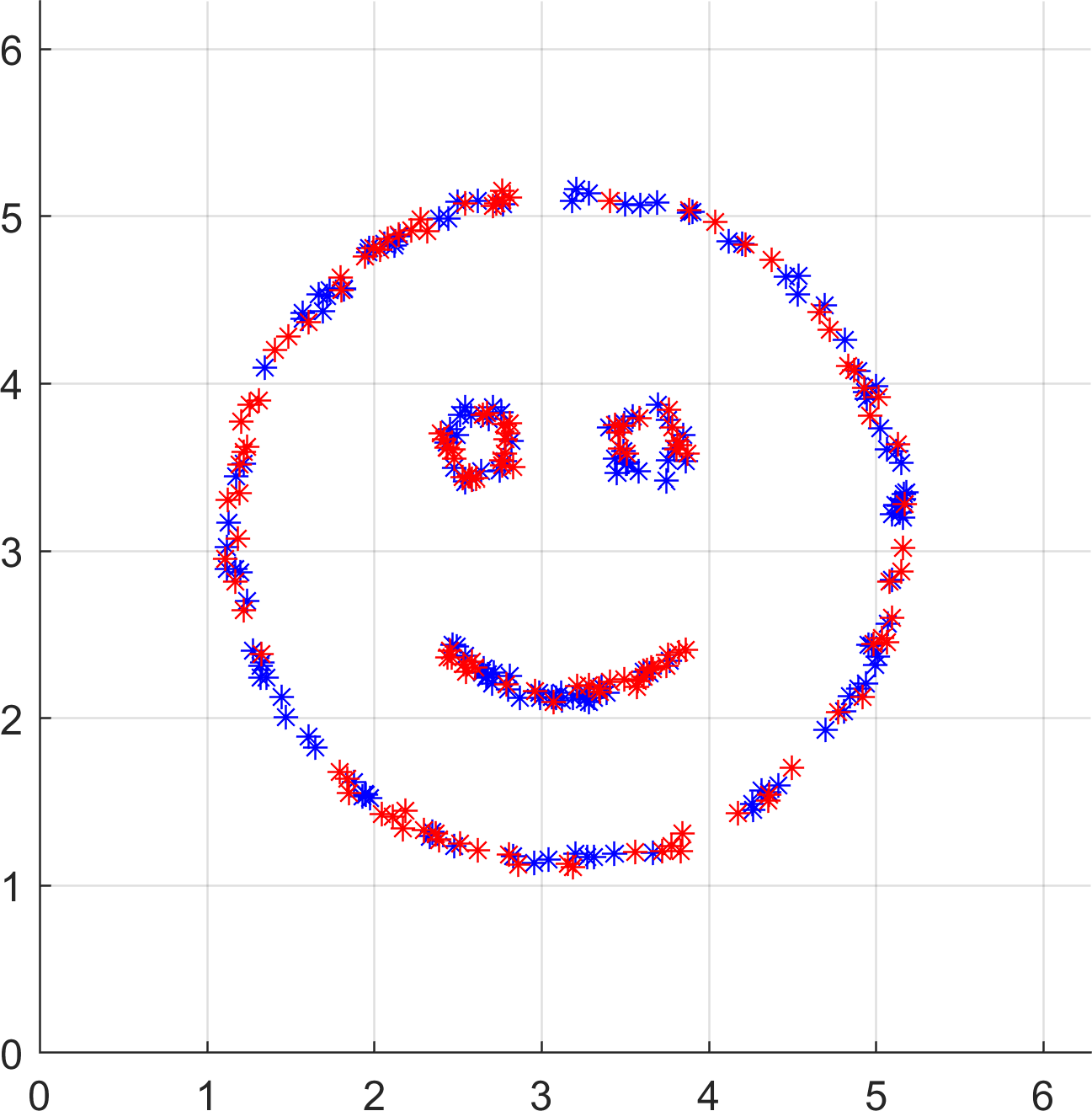}}\hfill
    \subfloat
    {\includegraphics[width=0.3\columnwidth]{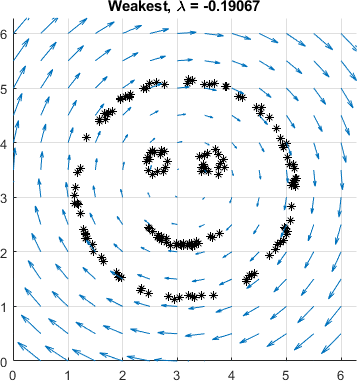}}\hfill
    \subfloat
    {\includegraphics[width=0.3\columnwidth]{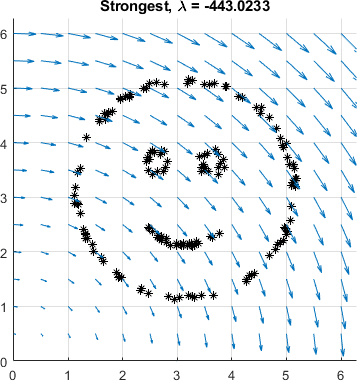}}
    \caption{Left: The images of the two point clouds. The blue stars represent $X$ while the red are $Z$. Center: The vector field corresponding to the eigenvector of the Hessian with the weakest eigenvalue. Right: The vector field corresponding to the eigenvector of the Hessian with the strongest eigenvalue.}
    \label{fig:2faces}
\end{figure*}

\subsubsection{The Hessian}
To calculate the Hessian for the $\SE(2)$ case, we first choose an orthogonal basis $\{e_z,e_1,e_2\}\in\mathfrak{se}(2)$ where
\begin{equation*}
    e_z = \begin{bmatrix}[r]
    0 & -1 & 0 \\ 1 & 0 & 0 \\ 0 & 0 & 0
    \end{bmatrix},\quad e_1 = \begin{bmatrix}[r]
    0 & 0 & 1 \\ 0 & 0 & 0 \\ 0 & 0 & 0
    \end{bmatrix}, \quad e_2 = \begin{bmatrix}[r]
    0 & 0 & 0 \\ 0 & 0 & 1 \\ 0 & 0 & 0
    \end{bmatrix}.
\end{equation*}
Additionally, we will take $\{E_z,E_1,E_2\}$ to be the corresponding left-invariant vector fields. The connection is given by 
\begin{equation*}
    \begin{split}
        \nabla_{E_z}E_1 &= E_2, \\
        \nabla_{E_z}E_2 &= -E_1,
    \end{split}
\end{equation*}
while all other combinations are zero. The gradient in these coordinates is $\omega e_z + v_1e_1 + v_2e_2$. The Hessian in these coordinates is
\begin{equation}\label{eq:formSE2Hess}
-H_{\SE(2)} = \begin{bmatrix}
a^2\mathcal{L}_{E_z}\omega & b^2\mathcal{L}_{E_z}v^1 + b^2v^2 & b^2\mathcal{L}_{E_z}v^2 - b^2v^1 \\
a^2\mathcal{L}_{E_1}\omega & b^2\mathcal{L}_{E_1}v^1 & b^2\mathcal{L}_{E_1}v^2 \\
a^2\mathcal{L}_{E_2}\omega & b^2\mathcal{L}_{E_2}v^1 & b^2\mathcal{L}_{E_2}v^2
\end{bmatrix}.
\end{equation}
For the sake of simplicity, let the Hessian be
\begin{equation*}
    H_{\SE(2)} = \frac{1}{\ell^2} \, \sumxz \, c_{ij} k(x_i,\tilde{z}_j) \mathcal{S}_{ij},
\end{equation*}
where
\begin{equation*}
    \mathcal{S}_{ij} = \begin{bmatrix}
    \frac{1}{\ell^2}\left(x_i\tilde{\times}\tilde{z}_j\right)^2 - \langle x_i,\tilde{z}_j\rangle & 
    x_i^2 + \frac{1}{\ell^2}(\tilde{z}_j^1-x_i^1)(x_i\tilde{\times}\tilde{z}_j) & 
    -x_i^1 + \frac{1}{\ell^2}(\tilde{z}_j^2-x_i^2)(x_i\tilde{\times}\tilde{z}_j) \\
    x_i^2 + \frac{1}{\ell^2}(\tilde{z}_j^1-x_i^1)(x_i\tilde{\times}\tilde{z}_j)  &  
    \frac{1}{\ell^2}(\tilde{z}_j^1 - x_i^1)^2-1 & 
    \frac{1}{\ell^2} (x_i^1-\tilde{z}_j^1)(x_i^2-\tilde{z}_j^2) \\
    -x_i^1 + \frac{1}{\ell^2}(\tilde{z}_j^2-x_i^2)(x_i\tilde{\times}\tilde{z}_j) & 
    \frac{1}{\ell^2} (x_i^1-\tilde{z}_j^1)(x_i^2-\tilde{z}_j^2) &
    \frac{1}{\ell^2}(\tilde{z}_j^2 - x_i^2)^2-1
    \end{bmatrix}.
\end{equation*}
\begin{example}[Hessian Eigenvalues] \label{ex:se2Hess}
As an initial thought experiment, what happens if each cloud contains only a single point? As we are interested in the registration problem, let us assume that each of these points is at the same location $x\in\mathbb{R}^2$. Then, the Hessian is
\begin{equation*}
    H = \frac{1}{\ell^2} c_{11} \begin{bmatrix}[r]
    -\lVert x\rVert^2 & x_2 & -x_1 \\
    x_2 & -1 & 0 \\
    -x_1 & 0 & -1
    \end{bmatrix}.
\end{equation*}
It is interesting to note that this matrix is singular and its kernel is the span of $[-1,-x_2,x_1]^\transpose$. This Lie algebra element corresponds to a vector field on $\mathbb{R}^2$ (the infinitesimal generator, see Definition \ref{def:infgen}). The corresponding vector field is 
\begin{equation*}
    \xi(v) = \begin{bmatrix}
    v_2 - x_2 \\ -v_1 + x_1
    \end{bmatrix},
\end{equation*}
which is merely rotations about the point $x$. This rotation does not move either cloud and the degeneracy of $H$ predicts this!

In the more general context, the eigenvalues/vectors of $H$ carry much information about the structure and symmetry of the problem. At the solution, the Hessian is negative-definite and so the eigenvector corresponding to the largest eigenvalue (smallest in absolute value) reports on the direction of ``greatest symmetry.'' That is, suppose that $H\xi = \lambda\xi$ for $\xi\in\mathfrak{g}$ and $|\lambda|$ is ``small.'' Then moving $Z$ along the vector field $\xi_M$ is not noticeable under $F$. To see this, we consider the case where $X$ and $Z$ are images of a smiley face as in Figure \ref{fig:2faces}. The Hessian for this example is
\begin{equation*}
H = \begin{bmatrix}[r]
    -422.1054 &  55.3179 & -75.8545 \\
   55.3179 & -16.1852  &  0.8360 \\
  -75.8545  &  0.8360 & -23.3209
\end{bmatrix}.
\end{equation*}
The weakest eigenvalue is $-0.1907$ with a corresponding eigenvector of $$\xi = [-0.2140,-0.7048,0.6764]^\transpose.$$
The vector field for this eigenvector is shown in Figure \ref{fig:2faces}.
The impressive aspect of this is that it is possible to learn symmetries in images!
\end{example}

\subsection{$\SE(3)$}
This section will closely mimic the previous; however it will be more involved. 
\subsubsection{The Riemann Exponential}
The Euler-Poincar\'{e} equations for $\SE(3)$ are
\begin{equation*}
\begin{split}
    \dot{\omega} &= 0, \\
    \dot{v} &= -\omega\times v.
\end{split}
\end{equation*}
Integrating these in conjunction with the reconstruction equations produce the Lie exponential for the rotation part and the identity for the translations; the same as the $\SE(2)$ case above. That is,
\begin{equation*}
    \mathrm{Exp}_0(R,T) = \left( \exp(R),T\right).
\end{equation*}
Remark \ref{rmk:exp} holds in this case as well. Moreover, this holds for the higher-dimensional $\SE(n)$ as well.
\subsubsection{The Hessian}
To compute the Hessian, we choose the following orthogonal coordinates for $\mathfrak{se}(3)$:
\begin{align*}
    \begin{array}{llllll}
        e_x &= \begin{bmatrix}[r]
        0 & 0 & 0 & 0 \\
        0 & 0 & -1 & 0 \\
        0 & 1 & 0 & 0 \\
        0 & 0 & 0 & 0
        \end{bmatrix}, & e_y = \begin{bmatrix}[r]
        0 & 0 & 1 & 0 \\
        0 & 0 & 0 & 0 \\
        -1 & 0 & 0 & 0 \\
        0 & 0 & 0 & 0
        \end{bmatrix}, & e_z = \begin{bmatrix}[r]
        0 & -1 & 0 & 0 \\
        1 & 0 & 0 & 0 \\
        0 & 0 & 0 & 0 \\
        0 & 0 & 0 & 0
        \end{bmatrix}, \\
        e_1 &= \begin{bmatrix}[r]
        0 & 0 & 0 & 1 \\
        0 & 0 & 0 & 0 \\
        0 & 0 & 0 & 0 \\
        0 & 0 & 0 & 0
        \end{bmatrix}, & e_2 = \begin{bmatrix}[r]
        0 & 0 & 0 & 0 \\
        0 & 0 & 0 & 1 \\
        0 & 0 & 0 & 0 \\
        0 & 0 & 0 & 0
        \end{bmatrix}, & e_3 = \begin{bmatrix}[r]
        0 & 0 & 0 & 0 \\
        0 & 0 & 0 & 0 \\
        0 & 0 & 0 & 1 \\
        0 & 0 & 0 & 0
        \end{bmatrix}.
    \end{array}
\end{align*}
Likewise, let $\{E_x,E_y,E_z,E_1,E_2,E_3\}$ be their corresponding left-invariant vector fields. With this, the connection is given by
\begin{align}
\label{eq:connection_coef_se3}
\nonumber \nabla_{E_x}E_y &= \frac{1}{2}E_z, \nabla_{E_x}E_z = -\frac{1}{2}E_y, \nabla_{E_y}E_z = \frac{1}{2}E_x, \\
\nonumber \nabla_{E_y}E_x &= -\frac{1}{2}E_z, \nabla_{E_z}E_x = \frac{1}{2}E_y, \nabla_{E_z}E_y = -\frac{1}{2}E_x,\\
\nonumber \nabla_{E_x}E_2 &= E_3, \nabla_{E_x}E_3 = -E_2, \nabla_{E_y}E_3 = E_1, \\ 
\nabla_{E_y}E_1 &= -E_3, \nabla_{E_z}E_1 = E_2, \nabla_{E_z}E_2 = -E_1,
\end{align}
while all other combinations are zero. 
\begin{remark}
Notice that the coefficients for the connection in \eqref{eq:connection_coef_se3} do not depend on either $a$ or $b$! This is to be expected because the Levi-Civita connection needs to be torsion free. This means that $\nabla_XY-\nabla_YX=[X,Y]$. Since the coordinates we are taking are the left-invariant vector fields, their bracket is just the bracket of the Lie algebra which does not depend on $a$ nor $b$.
\end{remark}
We are now ready to write down the Hessian matrix (we will assume that $a=b=1$).

\begin{align}\label{eq:mSE3_Hessian}
    \nonumber &-H_{\SE(3)} = \\ 
    \nonumber &\left[\begin{array}{ccc;{2pt/2pt}ccc}
    \mathcal{L}_{E_x}\omega_x & \mathcal{L}_{E_x}\omega_y + \frac{1}{2}\omega_z & \mathcal{L}_{E_x}\omega_z - \frac{1}{2}\omega_y & \mathcal{L}_{E_x}v_1 & \mathcal{L}_{E_x}v_2 + v_3 & \mathcal{L}_{E_x}v_3 - v_2 \\
    \mathcal{L}_{E_y}\omega_x - \frac{1}{2}\omega_z & \mathcal{L}_{E_y}\omega_y & \mathcal{L}_{E_y}\omega_z + \frac{1}{2}\omega_x & \mathcal{L}_{E_y}v_1 - v_3 & \mathcal{L}_{E_y}v_2 & \mathcal{L}_{E_y}v_3 + v_1 \\
    \mathcal{L}_{E_z}\omega_x + \frac{1}{2}\omega_y & \mathcal{L}_{E_z}\omega_y - \frac{1}{2}\omega_x & \mathcal{L}_{E_z}\omega_z & \mathcal{L}_{E_z}v_1 + v_2 & \mathcal{L}_{E_z}v_2-v_1 & \mathcal{L}_{E_z}v_3 \\ \hdashline[2pt/2pt]
    \mathcal{L}_{E_1}\omega_x & \mathcal{L}_{E_1}\omega_y & \mathcal{L}_{E_1}\omega_z & \mathcal{L}_{E_1}v_1 & \mathcal{L}_{E_1}v_2 & \mathcal{L}_{E_1}v_3 \\
    \mathcal{L}_{E_2}\omega_x & \mathcal{L}_{E_2}\omega_y & \mathcal{L}_{E_2}\omega_z & \mathcal{L}_{E_2}v_1 & \mathcal{L}_{E_2}v_2 & \mathcal{L}_{E_2}v_3 \\
    \mathcal{L}_{E_3}\omega_x & \mathcal{L}_{E_3}\omega_y & \mathcal{L}_{E_3}\omega_z & \mathcal{L}_{E_3}v_1 & \mathcal{L}_{E_3}v_2 & \mathcal{L}_{E_3}v_3 \\
    \end{array}\right] \\
    &=: -\left[ \begin{array}{c;{2pt/2pt}c}
    \mathcal{A} & \mathcal{B} \\ \hdashline[2pt/2pt]
    \mathcal{C} & \mathcal{D} \end{array} \right].
\end{align}
Notice that the first 3 by 3 block, $\mathcal{A}$, is identical to \eqref{eq:Hsphere}. This reflects the copy of $\SO(3)$ lying inside $\SE(3)$.

For the sake of brevity, we will use the following abbreviation:
\begin{equation*}
    H_{\SE(3)} = \frac{1}{\ell^2} \, \sumxz \, c_{ij}\cdot k(x_i,\tilde{z}_j)\cdot \left[ \begin{array}{c;{2pt/2pt}c}
    \mathcal{A}_{ij} & \mathcal{B}_{ij} \\ \hdashline[2pt/2pt]
    \mathcal{C}_{ij} & \mathcal{D}_{ij} \end{array} \right].
\end{equation*}
Notice that by symmetry of $H$, $\mathcal{B}_{ij}^\transpose = \mathcal{C}_{ij}$. Therefore, we will only compute $\mathcal{A}_{ij}$, $\mathcal{C}_{ij}$, and $\mathcal{D}_{ij}$. To simplify expressions, we will call $v = [v_1,v_2,v_3]^\transpose := x_i\times\tilde{z}_j$ and $u = [u_1,u_2,u_3]^\transpose := \tilde{z}_j-x_i$.
\begin{equation*}
    \mathcal{A}_{ij} = \begin{bmatrix} 
    \mathcal{A}_{ij}^1 & \mathcal{A}_{ij}^2 & \mathcal{A}_{ij}^3
    \end{bmatrix} %
\end{equation*}
\begin{equation*}
    \mathcal{A}_{ij}^1 = 
    \begin{bmatrix}
    \frac{1}{\ell^2} v_1^2 - (x_i^2\tilde{z}_j^2+x_i^3\tilde{z}_j^3) \\
    \frac{1}{\ell^2}v_1v_2 + \frac{1}{2}(x_i^1\tilde{z}_j^2+x_i^2\tilde{z}_j^1) \\
    \frac{1}{\ell^2}v_1v_3 + \frac{1}{2}(x_i^1\tilde{z}_j^3+x_i^3\tilde{z}_j^1) 
    \end{bmatrix} 
\end{equation*}
\begin{equation*}
    \mathcal{A}_{ij}^2 = 
    \begin{bmatrix}
    \frac{1}{\ell^2}v_1v_2 + \frac{1}{2}(x_i^1\tilde{z}_j^2+x_i^2\tilde{z}_j^1) \\
    \frac{1}{\ell^2} v_2^2 - (x_i^1\tilde{z}_j^1+x_i^3\tilde{z}_j^3) \\
    \frac{1}{\ell^2}v_2v_3 + \frac{1}{2}(x_i^2\tilde{z}_j^3+x_i^3\tilde{z}_j^2) 
    \end{bmatrix} 
\end{equation*}
\begin{equation*}
    \mathcal{A}_{ij}^3 = 
    \begin{bmatrix}
    \frac{1}{\ell^2}v_1v_3 + \frac{1}{2}(x_i^1\tilde{z}_j^3+x_i^3\tilde{z}_j^1) \\
    \frac{1}{\ell^2}v_2v_3 + \frac{1}{2}(x_i^2\tilde{z}_j^3+x_i^3\tilde{z}_j^2) \\
    \frac{1}{\ell^2} v_3^2 - (x_i^1\tilde{z}_j^1+x_i^2\tilde{z}_j^2)
    \end{bmatrix}
\end{equation*}
\begin{equation*}
    \mathcal{C}_{ij} = \begin{bmatrix}
    \frac{1}{\ell^2}u_1v_1 &
    -x_i^3 + \frac{1}{\ell^2}u_1v_2 &
    x_i^2 + \frac{1}{\ell^2} u_1v_3 \\
    x_i^3 + \frac{1}{\ell^2} u_2v_1 &
    \frac{1}{\ell^2}u_2v_2 &
    -x_i^1 + \frac{1}{\ell^2} u_2v_3 \\
    -x_i^2 + \frac{1}{\ell^2} u_3v_1 &
    x_i^1 + \frac{1}{\ell^2} u_3v_2 &
    \frac{1}{\ell^2}u_3v_3
    \end{bmatrix} \qquad
\end{equation*}
\begin{equation*}
    \mathcal{D}_{ij} = \begin{bmatrix}
    \frac{1}{\ell^2}u_1^2-1 & \frac{1}{\ell^2}u_1 u_2 &
    \frac{1}{\ell^2}u_1 u_3 \\
    \frac{1}{\ell^2}u_1 u_2 &
    \frac{1}{\ell^2}u_2^2-1 &
    \frac{1}{\ell^2}u_2 u_3 \\
    \frac{1}{\ell^2}u_1 u_3 &
    \frac{1}{\ell^2}u_2 u_3 &
    \frac{1}{\ell^2}u_3^2-1
    \end{bmatrix} .
\end{equation*}

\begin{table}[b]
\begin{center}
\resizebox{\columnwidth}{!}{
\begin{tabu} { c | c c c c c c}
 Eigenvalue & -613.9730 & -598.4636 & -547.6175 & -8.1304 & -0.2898 & 3.5950 \\ \midrule 
 Eigenvector & $\begin{bmatrix}[r]
    0.0000  \\  0.0022  \\  0.0021  \\ -0.5827 \\  0.6858  \\  0.4360
    \end{bmatrix}$ & $\begin{bmatrix}[r]
   -0.0032  \\  0.0016 \\  -0.0015  \\  0.6897  \\  0.7011 \\  -0.1810
   \end{bmatrix}$ & $\begin{bmatrix}[r]
    0.0033  \\  0.0027 \\   0.0010 \\  -0.4298  \\  0.1952 \\  -0.8815
    \end{bmatrix}$ & $\begin{bmatrix}[r]
    0.4029  \\  0.9089 \\   0.1073  \\  0.0030  \\ -0.0023 \\   0.0025
    \end{bmatrix}$ & $\begin{bmatrix}[r]
    0.2685  \\ -0.2295  \\  0.9355 \\   0.0032  \\  0.0006 \\  -0.0000
    \end{bmatrix}$ & $\begin{bmatrix}[r]
   -0.8750  \\  0.3481  \\  0.3365 \\  -0.0019  \\ -0.0027 \\  -0.0015
   \end{bmatrix}$ 
\end{tabu}}
\end{center}
\caption{The eigenvalue/vectors of $H$ in \eqref{eq:H_se3_sphere}.}
\label{tab:sphere_eig}
\end{table}

\begin{example}[Hessian Eigenvalues, Cont.]
The same analysis as presented in Example \ref{ex:se2Hess} can be carried out for $\SE(3)$ as well. Again, we will use the Hessian to identify symmetries within the point clouds. 
Specifically, we will consider the case where both $X$ and $Z$ are a sphere living in $\mathbb{R}^3$. The spheres can be aligned via translations but rotations do nothing. This should manifest in the Hessian; the block $\mathcal{D}$ in \eqref{eq:mSE3_Hessian} should dominate the Hessian while $\mathcal{A}$ should be the smallest.

In what follows, $X$ contains 1200 points on $S^2\in\mathbb{R}^3$ and $Z$ contains 1500 points (both of these clouds are already on $S^2$ so the registration problem is already solved) and all $c_{ij}=3$. The remaining parameters are $\ell=0.25$ and $\sigma = 10^{-4}$. The Hessian is
\begin{equation}\label{eq:H_se3_sphere}
    H = \left[\begin{array}{rrr;{2pt/2pt}rrr}
    1.3995 &  -4.0561 &  -1.4875  & 2.1207 &   0.9970  &  1.2481 \\
   -4.0561 &  -6.3054 &  -0.3125  & 0.7368 &  -1.8921  &  0.8737 \\
   -1.4875 &  -0.3125 &   0.0554  & 1.5803 &  -0.3814  & -0.2286 \\ \hdashline[2pt/2pt]
    2.1207 &   0.7368 &   1.5803  & -594.3227 & 1.9310    &  23.2125 \\
    0.9970 &  -1.8921 &  -0.3814  &   1.9310  & -603.8121 & -13.3898 \\
    1.2481 &   0.8737 &  -0.2286  &  23.2125  &  -13.3898 & -561.8939
    \end{array}\right].
\end{equation}
It can be seen that the translation component is on the order of 100 times larger than the rotation component. More formally, we can examine the eigenvalues/vectors of $H$. 

Table~\ref{tab:sphere_eig} shows that the eigenvalues corresponding to rotations are around 100 times smaller than the eigenvalues corresponding to translations. As a result, the Hessian can report on the regularity of the solution / identify infinitesimal symmetries in the pictures.

\end{example}

\section{Experimental Results: RGB-D Visual Odometry}
\label{sec:results}

The RGB-D visual odometry problem is illustrated in Figure~\ref{fig:rgbd-vo}. In the context of visual odometry, we call our method Continuous Visual Odometry (CVO), which has appeared in our earlier work~\citep{MGhaffari-RSS-19,lin2019adaptive}. We compare CVO with the state-of-the-art direct (and dense) RGB-D visual odometry (DVO)~\citep{kerl2013dense,kerl2013robust}. Since the original DVO source code requires outdated ROS dependency~\citep{Kerl2013DVOrepo}, we reproduced DVO results using the version provided by Matthieu Pizenberg~\citep{MatthieuP2019DVO}, which only removes the dependency for ROS while maintaining the DVO core source code unchanged. 

\begin{figure}[t]
    \centering
    \includegraphics[width=0.55\columnwidth]{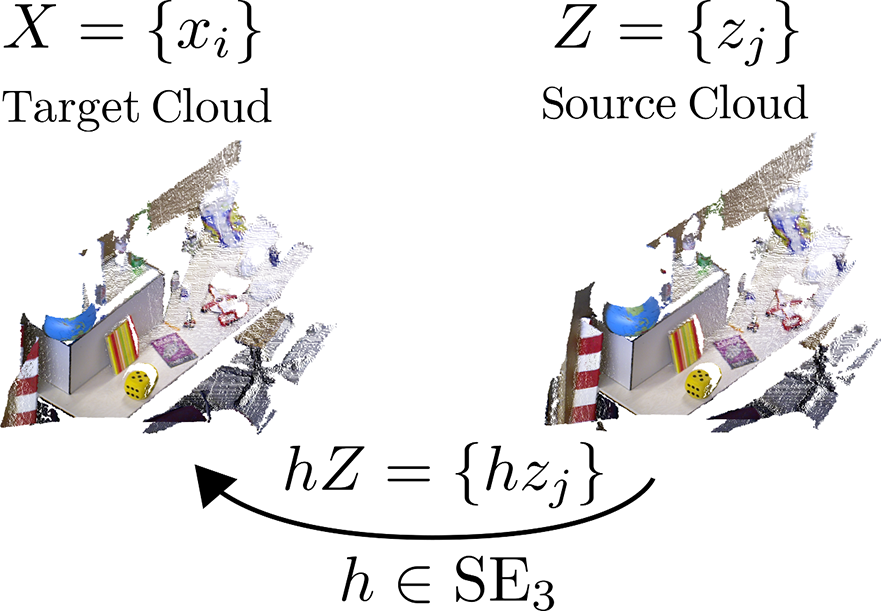}
    \caption{Visual odometry using depth cameras is the problem of finding a rigid body transformation between two colored point clouds.}
    \label{fig:rgbd-vo}
\end{figure}

\begin{table}[t]
\centering
\footnotesize
\begin{tabular}{lcr}
\toprule
    Parameters & Symbol & Value \\
        \midrule
        Transformation convergence threshold & $\epsilon_1$ & $1\mathrm{e}{-5}$ \\
        Gradient norm convergence threshold & $\epsilon_2$ & $5\mathrm{e}{-5}$ \\
        Minimum step length & & $0.2$ \\
        Kernel sparsification threshold & & $8.315\mathrm{e}{-3}$ \\
        Spatial kernel initial length-scale & $\ell_{init}$ & $0.1$ \\
        Spatial kernel signal variance & $\sigma$ & $0.1$  \\
        Color kernel length-scale & $\ell_c$ & $0.1$ \\
        Color kernel signal variance & $\sigma_c$ & $1$  \\
\bottomrule
\end{tabular}
\caption{ Parameters used for evaluation using TUM RGB-D Benchmark, similar values are chosen for all experiments. The kernel characteristic length-scale is chosen to be adaptive as the algorithm converges~\citep{MGhaffari-RSS-19}; intuitively, we prefer a large neighborhood of correlation for each point, but as the algorithm reaches the convergence reducing the local correlation neighborhood allows for faster convergence and better refinement.}
\label{tab:parameters}
\end{table}

\begin{table}[t]
    \centering
    \footnotesize 
    \begin{tabular}{lcccc|cccc}
        \toprule
         & \multicolumn{4}{c}{Training} & \multicolumn{4}{c}{Validation} \\
        \midrule
          & \multicolumn{2}{c}{CVO} & \multicolumn{2}{c}{DVO} & \multicolumn{2}{c}{CVO} &  \multicolumn{2}{c}{DVO} \\
         Sequence & Trans. & Rot. & Trans. & Rot. & Trans. & Rot. & Trans. & Rot. \\
        \midrule
        fr1/desk    & 0.0486 & 2.4860 & \bf 0.0387 & \bf 2.3589 & 0.0401 & \bf 2.0148 & \bf 0.0371 & 2.0645  \\
        fr1/desk2   & \bf 0.0535 & \bf 3.0383 & 0.0583 & 3.6529 & 0.0225 & 1.7691 & \bf 0.0208 & \bf 1.7416  \\ 
        fr1/room    & 0.0560 & \bf 2.4566 & \bf 0.0518 & 2.8686 & \bf 0.0446 & \bf 3.9183 & 0.2699 & 7.4144  \\
        fr1/360     & \bf 0.0991 & \bf 3.0025 & \bf 0.1602 & 4.4407 & \bf 0.1420 & \bf 3.0746 & 0.2811 & 7.0876  \\
        fr1/teddy   & \bf 0.0671 & 4.8089 & 0.0948 & \bf 2.5495 & n/a    & n/a    & n/a    & n/a  \\
        fr1/floor   & 0.0825 & 2.3745 & \bf 0.0635 & \bf 2.2805 & n/a    & n/a    & n/a    & n/a  \\
        fr1/xyz     & \bf 0.0240 & \bf 1.1703 & 0.0327 & 1.8751 & \bf 0.0154 & \bf 1.3872 & 0.0453 & 3.0061 \\
        fr1/rpy     & 0.0457 & 3.3073 & \bf 0.0336 & \bf 2.6701 & \bf 0.1138 & \bf 3.6423 & 0.3607 & 7.9991  \\
        fr1/plant   & 0.0316 & 1.9973 & \bf 0.0272 & \bf 1.5523 & \bf 0.0630 & 4.9185 & 0.0660 & \bf 2.5865  \\
        \midrule
        Average & \bf 0.0561 & 2.7380 & 0.0623 & \bf 2.6943 & \bf 0.0631 & \bf 2.9607 & 0.1544 & 4.5571 \\
        \bottomrule
    \end{tabular}
    \caption{The RMSE of Relative Pose Error (RPE) for \texttt{fr1} sequences. The trans. columns show the RMSE of the translational drift in $\mathrm{m}/\sec$ and the rot. columns show the RMSE of the rotational error in $\mathrm{deg}/\sec$. There's no corresponding validation data sets for \texttt{fr1/teddy} and \texttt{fr1/floor}.}
    \label{tab:fr1_results}
\end{table}

To improve the computational efficiency, we adopted a similar approach to Direct Sparse Visual Odometry (DSO) by~\citet{engel2014lsd} to create a semi-dense point cloud (around 3000 points) for each scan. To prevent insufficient points being selected in environments that lack rich visual information, we also used a Canny edge detector~\citep{canny1987computational} from OpenCV~\citep{opencv_library}. When the number of points selected by the DSO point selector is less than one-third of the desired number of points, more points will be selected by downsampling the pixels highlighted by the Canny detector. While generating the point cloud, RGB values are first transformed into the HSV colormap and normalized. The normalized HSV values are then combined with the normalized intensity gradients and used as the labels of the selected points in the color space. For all experiments, we used the same set of parameters, which are listed in Table~\ref{tab:parameters}.

All experiments are performed on a Dell XPS15 9750 laptop with Intel i7-8750H CPU (6 cores with 2.20 GHz each) and 32GB RAM. The source code is implemented in C++ and compiled with the Intel Compiler. The kernel computations are parallelized using the Intel Threading Building Blocks (TBB)~\citep{intel2019tbb}. Using compiler auto-vectorization and the parallelization, the average time for frame-to-frame registration is 0.5 $\sec$. Software is available for download at \href{https://github.com/MaaniGhaffari/cvo-rgbd}{\url{https://github.com/MaaniGhaffari/cvo-rgbd}}.

\subsection{TUM RGB-D Benchmark}

We performed experiments on two parts of \mbox{RGB-D} SLAM data set and benchmark by the Technical University of Munich~\citep{sturm2012benchmark}. This data set was collected indoors with a Microsoft Kinect using a motion capture system as a proxy for ground truth trajectory. For all tracking experiments, the entire images were used sequentially without any skipping, i.e., at full frame rate. We evaluated CVO and DVO on the training and validation sets for all the \texttt{fr1} sequences and the structure versus texture sequences. \mbox{RGB-D} benchmark tools~\citep{sturm2012benchmark} were then used to evaluate the Relative Pose Error (RPE) of both methods.

Table~\ref{tab:fr1_results} shows the Root-Mean-Squared Error (RMSE) of the RPE for \texttt{fr1} sequences. The Trans. columns show the RMSE of the translational drift in $\mathrm{m}/\sec$ and the Rot. columns show the RMSE of the rotational drift in $\mathrm{deg}/\sec$. There are no corresponding validation sequences for \texttt{fr1/teddy} and \texttt{fr1/floor}. On the validation set, CVO has improved performance compared with DVO which shows it can generalize across different scenarios better. From the results, we can see that CVO is intrinsically robust. The next experiment will further reveal that CVO has the advantage of performing well in extreme environments that lack rich structure or texture.

\subsection{Structure vs. Texture Sequences}

Table~\ref{tab:fr3_results} shows the RMSE of RPE for the structure vs. texture sequences. This data set contains image sequences in structure/nostructure and texture/notexture environments. As elaborated by~\citet{MGhaffari-RSS-19}, by treating point clouds as points in the function space (RKHS), CVO is inherently robust to the lack of features in the environment. CVO shows the best performance on cases where either structure or texture is not rich in the environment. This reinforces the claim that CVO is robust to such scenes.

\begin{table}
    \centering
    \footnotesize 
    \begin{tabular}{lllcccc|cccc}
        \toprule
         & & & \multicolumn{4}{c}{Training} & \multicolumn{4}{c}{Validation} \\
        \midrule
           \multicolumn{3}{c}{Sequence}  & \multicolumn{2}{c}{CVO} & \multicolumn{2}{c}{DVO} & \multicolumn{2}{c}{CVO} & \multicolumn{2}{c}{DVO} \\
         \multicolumn{3}{l}{structure-texture-dist.} & Trans. & Rot. & Trans. & Rot. & Trans. & Rot. & Trans. & Rot. \\
        \midrule
        $\times$   & \checkmark & near & \bf 0.0279 & \bf 1.3470 &  0.0563 & 1.7560 & \bf 0.0310 & 1.6367 & 0.0315 & \bf1.1498  \\
        $\times$   & \checkmark & far  & \bf 0.0609 & \bf 1.2342 &  0.1612 & 3.4135 & \bf 0.1374 & \bf 2.3929 & 0.5351 & 8.2529 \\
        \checkmark & $\times$   & near & \bf 0.0221 & \bf 1.3689 &  0.1906 & 10.6424 & \bf 0.0465 & \bf 2.0359 & 0.1449 & 4.9022 \\
        \checkmark & $\times$   & far  & \bf 0.0372 & \bf 1.3061 &  0.1171 & 2.4044 & \bf 0.0603 & \bf 1.8142 & 0.1375 & 2.2728 \\
        \checkmark & \checkmark & near & 0.0236 & 1.2972 & \bf 0.0175 & \bf 0.9315 & 0.0306 & 1.8694 & \bf0.0217 & \bf1.2653 \\
        \checkmark & \checkmark & far  & 0.0409 & 1.1640 & \bf 0.0171 & \bf 0.5717 & 0.0616 & 1.4760 & \bf0.0230 & \bf0.6312 \\
        $\times$   & $\times$   & near & \bf 0.2119 & \bf 9.7944 & 0.3506 & 13.3127 & \bf 0.1729 & \bf5.8674 & 0.1747 & 6.0443 \\
        $\times$   & $\times$   & far  & \bf 0.0799 & \bf 3.0978 & 0.1983 & 6.8419 & \bf 0.0899 & \bf 2.6199 & 0.2000 & 6.5192 \\
        \midrule
		\multicolumn{3}{c}{Average}    & \bf 0.0631 & \bf 2.5762 & 0.1386 & 4.9843 & \bf 0.0787 & \bf 2.4640 & 0.1586 & 3.8797 \\
        \bottomrule
    \end{tabular}
    \caption{The RMSE of Relative Pose Error (RPE) for the structure v.s texture sequence. The Trans. columns show the RMSE of the translational drift in $\mathrm{m}/\sec$ and the Rot. columns show the RMSE of the rotational error in $\mathrm{deg}/\sec$. The \checkmark means the sequence has structure/texture and $\times$ means the sequence does not have structure/texture. The results show while DVO performs better in structure and texture case, CVO has significantly better performance in the environments that lack structure and texture.}
    \label{tab:fr3_results}
\end{table}

We also note that DVO has the best performance on the case where the environment contains rich texture and structure information. This could be due to two factors: 1) CVO adopts a semi-dense point cloud construction from DSO~\citep{engel2018direct}, while DVO uses the entire dense image without subsampling. Although the semi-dense tracking approach of~\citet{engel2014lsd,engel2018direct} is computationally attractive and we advocate it, the semi-dense point cloud construction process used in this work is a heuristic process and might not necessarily capture the relevant information in each frame optimally; 2) DVO uses a motion prior as regularizer whereas CVO solely depends on the camera information with no regularizer. We conjecture this latter is the reason DVO, relative to the training set, does not perform well on validation sequences. The motion prior is a useful assumption when it is true. It can help to tune the method better on the training sets but if the assumption gets violated can lead to poor performance. The addition of an IMU sensor of course can improve the performance of both methods and is an interesting future research direction.

\section{Discussions and Further Considerations}
\label{sec:discussion}
In this section, we present discussions on a number of topics that are worth investigating.

\subsection{Locality of Solutions}
\label{subsec:locality}

\begin{figure}
    \centering
    \subfloat{\includegraphics[width=0.3\textwidth]{face_cropped.png}}\hfill
    \subfloat
    {\includegraphics[width=0.33\textwidth]{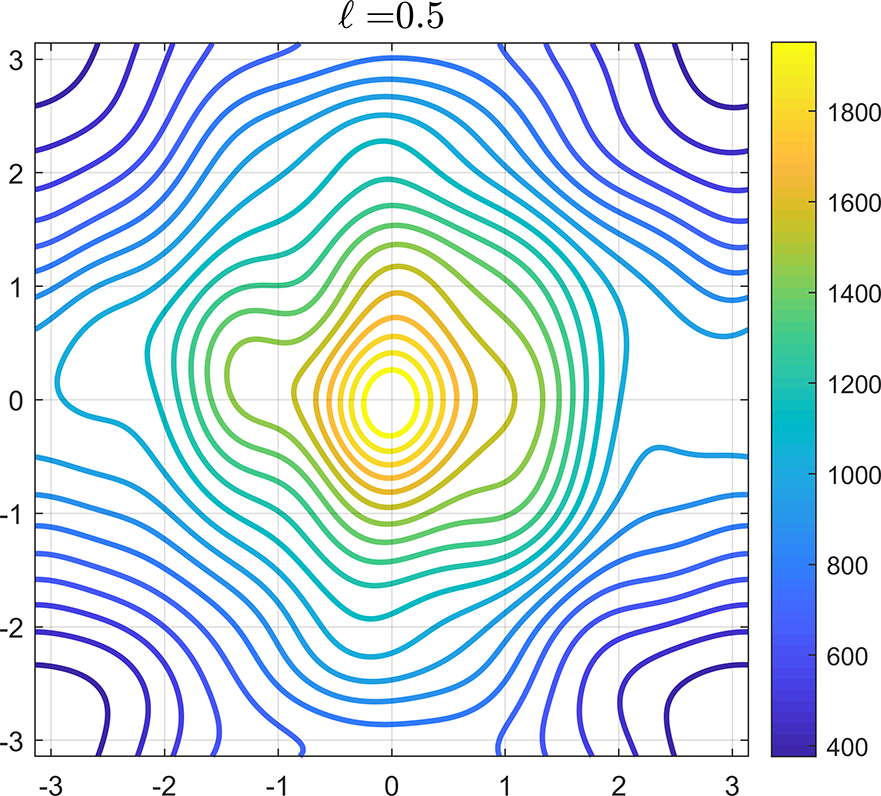}}\hfill
    \subfloat
    {\includegraphics[width=0.33\textwidth]{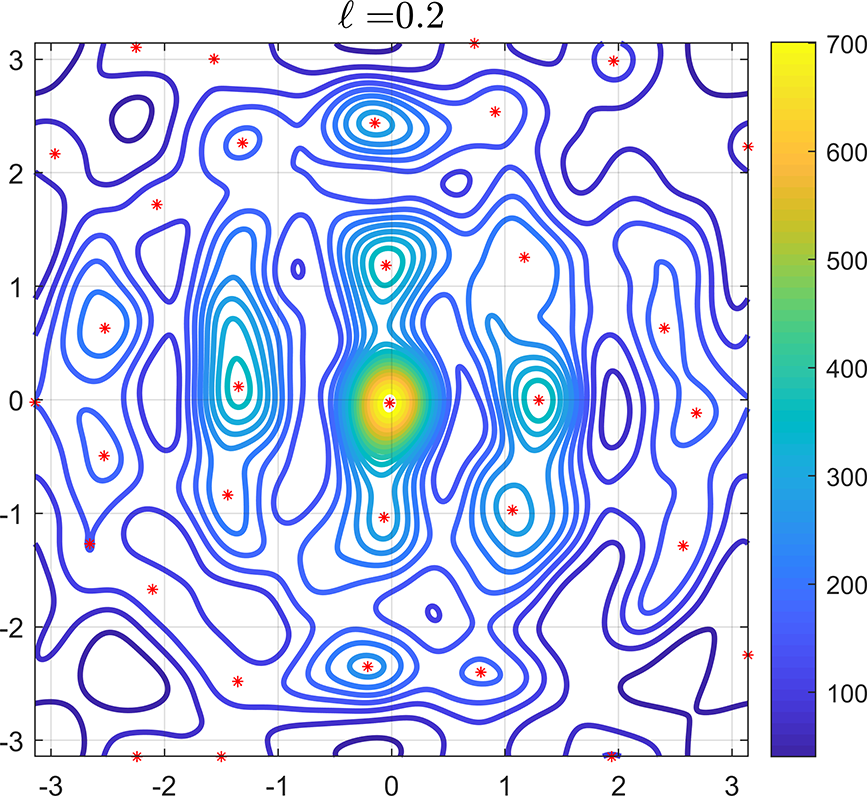}}
    \caption{Left: The images of the two point clouds. The blue stars represent $X$ while the red are $Z$. Center: A contour plot of $F:T^2\to\mathbb{R}$ where $\ell = 0.5$. Right: A contour plot of $F:T^2\to\mathbb{R}$ where $\ell = 0.2$. A total of 33 local maxima are found in this picture. A video of the effect of varying $\ell$ is available at \href{https://youtu.be/ETr6-c0VapQ}{\url{https://youtu.be/ETr6-c0VapQ}}.}
    \label{fig:contours}
\end{figure}

\begin{figure}[t]
    \centering
    \subfloat
    {\includegraphics[width=0.4\columnwidth]{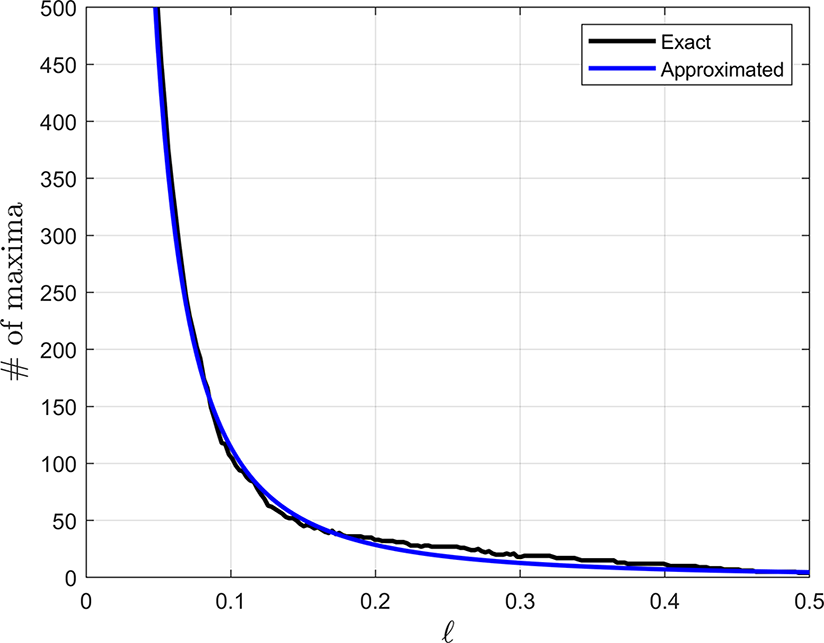}}~
    \subfloat
    {\includegraphics[width=0.4\columnwidth]{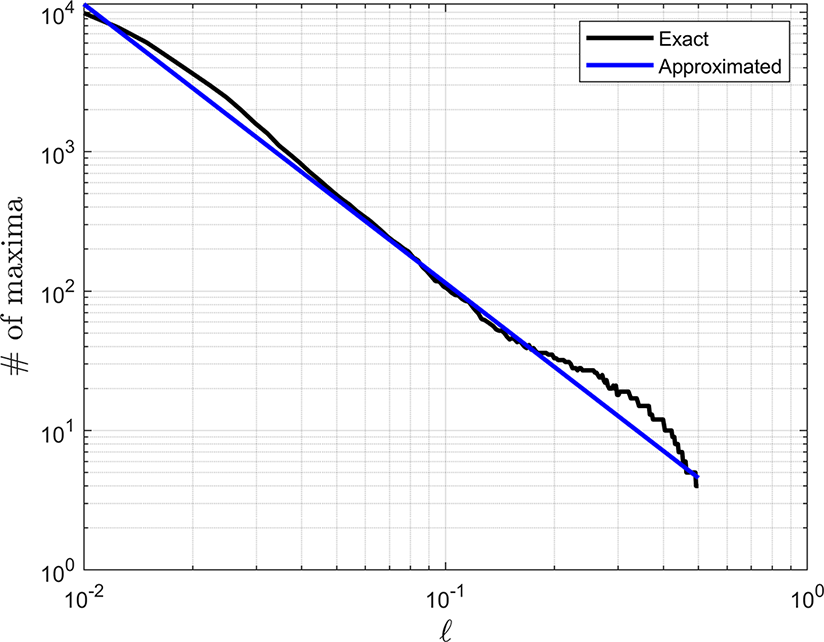}}
    \caption{The images above display the number of local maxima of the function $F:T^2\to\mathbb{R}$ as a function of the length-scale. The black curve shows the exact number of local maxima as reported by Matlab's ``\texttt{imregionalmax}'' while the blue curve is a plot of $N=1.16602\cdot\ell^{-2}$.}
    \label{fig:comparing_number_max}
\end{figure}

It is important to stress that the registration problem laid out here is only a local solver. However, how detrimental is this? For the purposes of gaining intuition, we will compute $F:T^2\to\mathbb{R}$ for the $T^2$ case, see Section \ref{sec:torus}. 
For the point clouds, we will use the same clouds as in Example \ref{ex:se2Hess} (although viewing them in $[0,2\pi)^2\sim T^2$ rather than $\mathbb{R}^2$). The torus is discretized into a 500 by 500 grid and length-scales are chosen in the range $[0.01,0.5]$. For each value of $\ell$, a contour plot of $F:T^2\to\mathbb{R}$ is constructed and the number of local maxima is recorded (with Matlab's ``\texttt{imregionalmax}'' function). Two contour plots corresponding to $\ell=0.5$ and $\ell=0.2$ are shown in Figure \ref{fig:contours}.

It is important to note that there exists a plethora of local maxima. This illustrates how important it is to have a good initialization, or else the gradient flow will end up getting trapped at a false solution. However, the number of local solutions depends heavily on $\ell$. This can be used in practice by having $\ell$ start off large and shrink between iterations, even though care must be taken if one deals with positive-definite kernel design and hyperparameter learning on curved spaces as the Gaussian kernel might not be positive-definite for all $\ell$~\citep{feragen2015geodesic,borovitskiy2020mat}. 

Another observation is that the number of local solutions seems to grow according to a power law of the length-scale, as shown in Figure \ref{fig:comparing_number_max}. Controlling the number of modes is an important problem because if we can control the number of modes, we can initially generate a convex problem and guide the algorithm to the global solution. This approach can be seen as the continuous analogue of the image pyramid method in computer vision~\citep{szeliski2010computer} where low-resolution images are processed first, and the next level has a higher resolution initialized by the previous step.

A heuristic explanation can be that we have more modes of the Gaussian kernel at lower bandwidths since a higher bandwidth makes the bumps around points coalesce. Assuming an inversely proportional relation to the bandwidth, then Figure \ref{fig:comparing_number_max} is a reasonable verification of the number of maxima. Nevertheless, a systematic approach for learning the bandwidth and controlling the complexity of the problem remains an open problem and is an interesting future study.

\subsection{Reduction to the Weighted Least Squares}
\label{subsec:wls}
The standard solution to sensor registration is typically formulated as a maximum likelihood or least-squares problem and solved via numerical optimization techniques. In the maximum likelihood setting, the measurement noise distribution significantly affects the robustness of the solver to outliers. Hence, in practice, the problem is necessarily solved using a robust estimator such as the iteratively reweighted least-squares algorithm or truncated least squares. The central shortcoming of this approach is the reliance on point-wise correspondences between two measurement sets. As a result, the implicit assumption is that the perceived data points by digital sensors are from the same points in reality. Of course, this is seldom true as the nature of digital sensing means a discretization of the reality is captured at best. Sparse and uniform (feature-scarce) measurement sets are where this shortcoming can become problematic. A prime example is the lack of robustness of the visual odometry algorithms to texture and structure-less scenes.

Without loss of generality, we will use the Gaussian (Squared Exponential) kernel based on a distance function, $d(\cdot,\cdot)$:
\begin{equation*}
    k(x,y) = \sigma^2\exp\left(-d(x,y)^2\right).
\end{equation*}
Further, let us define $w_{ij} := \sigma^2 \langle \ell_X(x_i),\ell_Z(z_j)\rangle_{\mathcal{I}}$ and $d_{ij} := d(x_i,z_j)$. Then we have
\begin{equation}\label{eq:scalar2}
\langle f_X,f_Z\rangle_{\Hcal} = \sum_{\substack{x_i\in X\\ z_j\in Z}} \, w_{ij} \cdot \exp\left(-d_{ij}^2\right).
\end{equation}

Now we explicitly apply the three assumptions that are used in the least squares problem.  
\begin{assumption}
The assumptions used in the least squares problem are:
\begin{enumerate}
    \item The point clouds $X$ and $Z$ have the same number of points.
    \item The point cloud $Z$ is ordered such that the matching indices are corresponding measurements.
    \item The residuals are only computed between corresponding measurements.  
\end{enumerate}
\end{assumption}

By applying the above-mentioned assumptions to~\eqref{eq:scalar2}, we get
\begin{equation*}\label{eq:scalar3}
\langle f_X,f_Z\rangle_{\Hcal} \approx \sum_{i} \, w_{i} \cdot \exp\left(-d_{i}^2\right),
\end{equation*}
where $w_{i} = w_{ii}$ and $d_{i} = d_{ii}$. and it is reduced to a single sum over matching indices of the double sum in~\eqref{eq:scalar2}. Finally, the $\exp$ can be approximated using a Taylor expansion as
\begin{equation*}
    \exp\left(-d_{i}^2\right) \approx 1 - d_i^2 + \cdots.
\end{equation*}

\begin{problem}\label{prob:problem_wls}
	The problem of aligning the point clouds can now be rephrased as the following weighted least squares form
	\begin{equation*}\label{eq:min_wls}
		\arg\min_{h\in \Gcal} \, S(h),\quad S(h):= \sum_{i} \, w_{i} \cdot d(x_i,h z_i)^2.
	\end{equation*}
\end{problem}

\subsection{Feature Embedding via Tensor Product Representation}
\label{subsec:encode_feature_space}

For completeness, we include an extension of the proposed framework developed by~\citet{zhang2020new} that is of particular interest to practitioners. The framework contains a specialized solver implemented using GPU that exploits the double sum's sparse structure in the objective function. In addition, it incorporates deep learning-based features such as semantic segmentation labels by extending the feature space to a hierarchical distributed representation. For extensive experimental results and open source software that validates presented topics in this work, please see~\citet{zhang2020new}.

Let $(V_1, V_2, \dots)$ be different inner product spaces describing different types of non geometric features of a point, such as color, intensity, and semantics. Their tensor product, $V_1 \otimes V_2 \otimes \dots $ is also an inner product space. For any   $x\in X, z\in Z$ with features $\ell_X(x)=(u_1, u_2, \dots)$ and $\ell_Z(z)=(v_1, v_2, \dots)$, with $u_1, v_1 \in V_1$,  $u_2, v_2\in V_2$, $\dots$, we have
\begin{align}
    \label{eq:tensor_product}
    \langle \ell_X(x), \ell_Z(z) \rangle_{\mathcal{I}} = \langle u_1 \otimes u_2 \otimes \dots , v_1 \otimes v_2 \otimes \dots \rangle
    = \langle u_1 , v_1 \rangle \cdot \langle u_2, v_2  \rangle  \cdot \dots .
\end{align}
By substituting~\eqref{eq:tensor_product} into \eqref{eq:scalar}, we obtain \begin{align}
    \nonumber \langle f_X,f_{h^{-1}Z}\rangle_{\Hcal} = \sum_{\substack{x_i\in X, z_j\in Z}} \langle u_{1i} , v_{1j} \rangle \cdot \langle u_{2i}, v_{2j}  \rangle   \dots  k(x_i, h^{-1}z_j) 
\end{align}
After applying the kernel trick~\cite[]{bishop2006pattern,rasmussen2006gaussian,murphy2012machine} we arrive at
\begin{align}
    \langle f_X,f_{h^{-1}Z}\rangle_{\Hcal} = \sum_{\substack{x_i\in X, z_j\in Z}} \, k(x_i,h^{-1}z_j) \cdot \prod_k k_{V_k} (u_{ki}, v_{kj})
    := \sum_{\substack{x_i\in X, z_j\in Z}} \,k(x_i,h^{-1}z_j) \cdot c_{ij} . \label{eq:double_sum}
\end{align}

Equation~\eqref{eq:double_sum} describes the full geometric and non-geometric relationship between the two point clouds. Each $c_{ij}$ does not depend on the relative transformation; thus, it will be a constant when computing the gradient and the step size for solving registration problems. Interestingly, the double sum in~\eqref{eq:double_sum} is sparse because a point $x_i\in X$ is far away from the majority of the points $z_j \in Z$, either in the spatial (geometry) space or one of the feature (semantic) spaces.

This formulation can be further simplified to a purely geometric model, if we let the label functions $ \ell_X(x_i)= \ell_Z(z_j)=1$. Then  \eqref{eq:double_sum} becomes 
\begin{align}
\label{eq:geometric_only}
\langle f_X,f_{h^{-1}Z}\rangle_{\Hcal}=\sum_{\substack{x_i\in X, z_j\in Z}} \,k(x_i,h^{-1}z_j) .
\end{align}
Through~\eqref{eq:geometric_only}, the proposed method can register point clouds that do not have appearance measurements. It is worth noting that, when choosing the squared exponential kernel, \eqref{eq:geometric_only} has the same formulation as Kernel Correlation~\citep{tsin2004correlation}.

\subsection{Non-Isotropic Kernel}
For the 3D space, the kernel used throughout is
\begin{equation*}
k(x,y) = \sigma^2\exp\left(-\frac{\lVert x-y\rVert^2}{2\ell^2}\right).
\end{equation*}
where $\ell\in\mathbb{R}$. What happens if we change so $\ell\in\mathbb{R}^3$ so the length-scale changes depending on the direction? This makes the kernel into
\begin{equation}
k_\Lambda(x,y) = \sigma^2\exp\left( -\langle x-y,\Lambda(x-y)\rangle\right),
\end{equation}
where $\Lambda$ is positive-definite and symmetric. Going through the same calculations as before, we get that
\begin{equation}
\begin{split}
\omega &= \frac{1}{a^2}\Lambda\sum_{ij} \, c_{ij}k_\Lambda(x_i,h^{-1}z_j) \left(x_i\times(h^{-1}z_j)\right), \\
v &= \frac{1}{b^2}\Lambda \sum_{ij} \, c_{ij}k_\Lambda(x_i,h^{-1}z_j) \left( x_i-h^{-1}z_j\right).
\end{split}
\end{equation}
Note that in the original case,
\begin{equation*}
\Lambda = \begin{bmatrix}
\frac{1}{2\ell^2} & 0 & 0 \\
0 & \frac{1}{2\ell^2} & 0 \\
0 & 0 & \frac{1}{2\ell^2}
\end{bmatrix} .
\end{equation*}

In terms of applications, modeling directional similarities has been successful in the Generalized-ICP framework developed by~\citet{Segal-RSS-09}. In particular, it was shown that in structured environments, e.g., flat walls, ceiling, and floor, defining a norm weighted by local empirical covariances of the two point clouds effectively reduces the pose estimation error. If such a distance metric is used inside the kernel, the kernel can be considered to be anisotropic. In terms of modeling, this is similar to a covariance function with automatic relevance determination~\citep{neal1996bayesian}.

\subsection{Mat\'{e}rn Kernel}
Rather than using the Gaussian (squared exponential) kernel, we will use the Mat\'{e}rn kernel~\citep{rasmussen2006gaussian}. The following derivations are only valid, i.e., positive definite, for all hyperparameters in Euclidean spaces. For the analogue of squared exponential and Mat\'ern kernels on compact Riemannian manifold please see the work of~\citet{borovitskiy2020mat}.

This is an infinite family of kernels. The Mat\'{e}rn Kernel, for points a distance $d$ apart is
\begin{equation}
C_\nu(d) = \sigma^2\frac{2^{1-\nu}}{\Gamma(\nu)}\left(\sqrt{2\nu}\frac{d}{\rho}\right)^\nu K_\nu\left(\sqrt{2\nu}\frac{d}{\rho}\right),
\end{equation}
where $\Gamma$ is the gamma function, $K_\nu$ is the Bessel function of the second kind, and $\rho$ and $\nu$ are positive parameters. We have the following derivative property:
\begin{equation*}
\frac{d}{dt} K_{\nu}(t) = -K_{\nu-1}(t) - \frac{\nu}{t}K_\nu(t).
\end{equation*}
With half-integers, $\nu = p + 1/2$, $p\in\mathbb{N}$
\begin{equation*}
C_{p+1/2}(d) = \sigma^2\exp\left( -\frac{\sqrt{2p+1}d}{\rho}\right)\frac{p!}{(2p)!}
\sum_{i=0}^p \, \frac{(p+i)!}{i!(p-i)!}\left( \frac{2\sqrt{2p+1}d}{\rho}\right)^{p-i}.
\end{equation*}
Differentiating the kernel with respect to $d$, we see that
\begin{equation*}\begin{split}
\frac{\partial}{\partial d}C_\nu(d) &= \sigma^2\frac{2^{1-\nu}}{\Gamma(\nu)} \left[
\nu\sqrt{2\nu}\left(\sqrt{2\nu}\frac{d}{\rho}\right)^{\nu-1}K_\nu\left(\sqrt{2\nu}\frac{d}{\rho}\right) \right. \\
&\left.+ \left(\sqrt{2\nu}\frac{d}{\rho}\right)^\nu\frac{\sqrt{2\nu}}{\rho}
\left[ -K_{\nu-1}\left(\sqrt{2\nu}\frac{d}{\rho}\right) - \frac{\nu\rho}{\sqrt{2\nu}d}K_\nu\left(\sqrt{2\nu}\frac{d}{\rho}\right) \right]
\right]
\end{split}\end{equation*}
Working with this does not seem appealing, so let us work with half-integer values instead. Below, the kernels and their corresponding gradients are listed. In what follows, $d_{ij} = \lVert x_i-h^{-1}z_j\rVert$.
\subsubsection{$\nu=1/2$}
\begin{equation}
C_{1/2}(d) = \sigma^2\exp\left(-\frac{d}{\rho}\right)
\end{equation}
\begin{equation}
\begin{split}
\omega &= \frac{1}{a^2\rho} \sum_{ij} c_{ij} \exp\left(-\frac{d_{ij}}{\rho}\right) \left(x_i\times h^{-1}z_j\right), \\
v &= \frac{1}{b^2\rho} \sum_{ij} c_{ij} \exp\left(-\frac{d_{ij}}{\rho}\right) \left(x_i- h^{-1}z_j\right).
\end{split}
\end{equation}
\subsubsection{$\nu=3/2$}
\begin{equation}
C_{3/2}(d) = \sigma^2\left( 1 + \frac{\sqrt{3}d}{\rho}\right)\exp\left(-\frac{\sqrt{3}d}{\rho}\right)
\end{equation}
\begin{equation}
\begin{split}
\omega &= \frac{3}{a^2\rho^2} \sum_{ij} c_{ij} d_{ij}\exp\left(-\frac{\sqrt{3}d_{ij}}{\rho}\right)\left(x_i\times h^{-1}z_j\right), \\
v &= \frac{3}{b^2\rho^2} \sum_{ij} c_{ij} d_{ij}\exp\left(-\frac{\sqrt{3}d_{ij}}{\rho}\right)\left(x_i- h^{-1}z_j\right).
\end{split}
\end{equation}
\subsubsection{$\nu=5/2$}
\begin{equation}
C_{5/2}(d) = \sigma^2\left( 1 + \frac{\sqrt{5}d}{\rho} + \frac{5d^2}{3\rho^2}\right)\exp\left(-\frac{\sqrt{5}d}{\rho}\right)
\end{equation}
\begin{equation}
\begin{split}
\omega &= \frac{5}{3a^2\rho^3} \sum_{ij} c_{ij} d_{ij}\left(\rho+\sqrt{5}d_{ij}\right) \exp\left(-\frac{\sqrt{5}d_{ij}}{\rho}\right) \left(x_i\times h^{-1}z_j\right), \\
v &= \frac{5}{3b^2\rho^3} \sum_{ij} c_{ij} d_{ij}\left(\rho+\sqrt{5}d_{ij}\right) \exp\left(-\frac{\sqrt{5}d_{ij}}{\rho}\right) \left(x_i- h^{-1}z_j\right).
\end{split}
\end{equation}

\subsection{Relationship with Deep Learning}
\label{subsec:reldl}

Point clouds obtained by modern sensors such as RGB-D cameras, stereo cameras, and LIDARs contain up to $300,000$ points per scan at $10-60 \Hz$ and rich color and intensity (reflectivity of a material sensed by an active light beam) measurements besides the geometric information. In addition, deep learning~\citep{lecun2015deep} can provide semantic attributes of the scene as measurements~\citep{long2015fcn,chen2017deeplab,zhu2019improving}.

\emph{Representation learning} provides a way to perform \emph{semi-supervised learning}~\cite[]{chapelle2006semi}, \emph{self-supervised learning}~\cite[]{Dahlkamp-RSS-06,sofman2006improving,doersch2017multi,Pathak_2017_CVPR_Workshops}, and \emph{unsupervised learning}~\cite[]{Goodfellow-et-al-2016}. Although the motivation behind learning a ``good'' representation from data varies across different fields and research communities, its mathematical foundation is moving towards a more unifying direction~\cite[]{bengio2013representation,litjens2017survey} where exploiting well-established classical tools and modern data-driven approaches such as deep learning seem inevitable~\cite[]{Girshick_2014_CVPR}. Perhaps the main undesirable aspect of modern deep learning-based methods is the heavy dependency on large labeled data, i.e., \emph{supervised learning}.

An intrinsic metric available in our framework is the angle, $\theta$, between two functions. This indicator can be computed to track the optimization progress. The cosine of the angle is defined as
\begin{equation}
\label{eq:indicator}
 \cos(\theta) = \dfrac{\langle f_X,f_Z\rangle_{\Hcal}}{\lVert f_X \rVert \cdot \lVert f_Z \rVert} .
\end{equation}
However, calculating $\lVert f_X \rVert$ and $\lVert f_Z \rVert$ is time-consuming as it requires evaluating the double sum for each of the two point clouds. To approximate~\eqref{eq:indicator}, we use the following result. 
\begin{remark}
Suppose $k(x_i,x_j)=\delta_{ij}$ and $c_{ii}=1$, where $\delta_{ij}$ is the Kronecker delta, then $\lVert f_X\rVert = \sqrt{|X|}$.
\end{remark}
\begin{corollary}
Using the previous assumption, we define the following alignment indicator.
\begin{equation}
\label{eq:indicator_actual}
    i_{\theta} := \dfrac{1}{\sqrt{|X|\cdot |Z|}} \sum_{\substack{x_i\in X, z_j\in Z}}  c_{ij} \cdot k(x_i,z_j) .
\end{equation}
\end{corollary}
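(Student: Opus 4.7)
The plan is to obtain \eqref{eq:indicator_actual} by directly substituting the expressions for the numerator and denominator of \eqref{eq:indicator} under the two standing assumptions inherited from the preceding remark, namely $k(x_i,x_j)=\delta_{ij}$ for points within the same cloud and $c_{ii}=1$.

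First, I would apply the remark twice: once to the target cloud to conclude $\lVert f_X\rVert_{\Hcal} = \sqrt{|X|}$, and once to the source cloud to conclude $\lVert f_Z\rVert_{\Hcal} = \sqrt{|Z|}$. Since the remark has already been stated and justified (it follows immediately by expanding $\lVert f_X\rVert^2_{\Hcal} = \sum_{x_i,x_j\in X} c_{ij}\, k(x_i,x_j)$ via~\eqref{eq:scalar} and collapsing the double sum to the diagonal under the Kronecker-delta hypothesis), this step is just a bookkeeping exercise that requires invoking the symmetry between the roles of $X$ and $Z$ in the definition of $f_X, f_Z$.

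Next, I would write the numerator of~\eqref{eq:indicator} using~\eqref{eq:scalar} (or equivalently~\eqref{eq:newscalar}) to obtain
\begin{equation*}
\langle f_X, f_Z\rangle_{\Hcal} \;=\; \sum_{\substack{x_i\in X\\ z_j\in Z}} c_{ij}\cdot k(x_i, z_j),
\end{equation*}
and then substitute this expression, together with the two norm computations above, into the defining formula~\eqref{eq:indicator} for $\cos(\theta)$. The ratio collapses to the right-hand side of~\eqref{eq:indicator_actual}, so that $\cos(\theta) = i_\theta$ under the stated assumptions, which is precisely the content of the corollary.

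There is essentially no technical obstacle in this derivation; it is a direct substitution. The only subtle point worth flagging, and the one I would emphasize in the discussion around the statement, is the interpretive question of when the defining assumption $k(x_i,x_j)=\delta_{ij}$ is a \emph{reasonable} surrogate for the actual kernel used within a cloud. For stationary kernels like the squared exponential with sparsification threshold applied (as in the experimental setup), $k(x_i, x_j)$ is very small for $i\ne j$ relative to $k(x_i, x_i)=\sigma^2$, so after normalization the off-diagonal contributions to $\lVert f_X\rVert^2_{\Hcal}$ are negligible and the identity $\lVert f_X\rVert_{\Hcal}\approx\sqrt{|X|}$ holds approximately. Thus \eqref{eq:indicator_actual} is best read as a computationally cheap proxy for $\cos(\theta)$ rather than an exact equality, which is consistent with its intended use as a progress indicator during optimization.
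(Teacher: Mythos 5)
Your derivation is correct and matches the paper's intended (implicit) argument: apply the remark to both clouds to get $\lVert f_X\rVert_{\Hcal}=\sqrt{|X|}$ and $\lVert f_Z\rVert_{\Hcal}=\sqrt{|Z|}$, then substitute these together with the double-sum expression for $\langle f_X,f_Z\rangle_{\Hcal}$ into \eqref{eq:indicator}, so that $i_\theta$ is exactly $\cos(\theta)$ under those assumptions. Your closing remark that \eqref{eq:indicator_actual} should be read as a cheap proxy (since the Kronecker-delta assumption is only approximately true within each cloud) is also consistent with how the paper uses the indicator.
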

The behavior of the alignment indicator with respect to the rotation and translation errors is stuied by~\citet{zhang2020new}. 

As a connection with deep neural networks, OverlapNet~\citep{chen2020overlapnet} uses a neural network to predict a similar metric and detect loop closures. The cosine of the angle in~\eqref{eq:indicator} or the indicator in~\eqref{eq:indicator_actual} provide such a metric for \emph{self-supervised} learning while taking into account the semantic information. Given the promising results of the work of~\citet{chen2020overlapnet}, the combination of our metric with deep learning is an interesting future research direction. The expected outcomes will be an scalable diagnostic tool for monitoring the quality of point cloud alignments and representation learning for the model in~\eqref{eq:double_sum}.

A more direct approach will be the development of a differentiable loss functions for deep learning tasks. \citet{zhu2020monocular} developed a new continuous 3D loss function for monocular single-view depth prediction based on our function representation. The proposed loss function addresses the gap between dense image prediction and sparse LIDAR supervision. By simply adding this new loss function to existing network architectures, the accuracy and geometric consistency of depth predictions are improved significantly on all three state-of-the-art baseline networks tested by \citet{zhu2020monocular}.


\section{Conclusion}
\label{sec:conclusion}
We developed a new mathematical framework for sensor registration that enables nonparametric joint semantic/appearance and geometric representation of continuous functions using data. The continuous functions allow the registration to be independent of a specific signal resolution, and the framework is fully analytical with a closed-form derivation of the Riemannian gradient and Hessian. We studied a restriction of the framework where the Lie group acts on functions isometrically. This restriction has a variety of applications, and its high-dimensional derivation for the special Euclidean group acting on the Euclidean space showcases the point cloud registration and bird’s-eye view map registration abilities. We derived low-dimensional cases with numerical examples to show the generality of the proposed framework. A specific implementation of this framework for RGB-D cameras performs well in texture and structure-scarce environments and is inherently robust to the input noise.

The general problem where the Lie group does not act isometrically on the functions has interesting applications such as image registration and nonrigid structure from motion. Although, the Lie algebra of the affine matrix group does not possess any particular structure, i.e., it includes all square matrices, due to its mentioned applications it is an attractive future research direction. The development of a globally optimal solver is, of course, another challenging and attractive future research direction.

The developed framework is a data-driven analytical model that possesses interesting intrinsic mathematical structures simultaneously, i.e., equivariance, and computational structures such as sparsity and parallel processing. Autonomy via computational intelligence is a multifaceted research domain which nicely integrates mathematics, computer science, and engineering and can have enormous impacts on our future and improve our quality of life. We developed a novel framework that can spark new research directions in robotics via sensor registration and mapping applications, computer vision via 3D scene reconstruction applications, medical imaging via nonrigid 3D reconstructions and registration, machine learning via Lie group machine learning, and in general artificial intelligence via an equivariant cognitive model.

\acks{A. Bloch and W. Clark were supported in part by NSF grant DMS-1613819 and AFOSR grant FA 0550-18-0028. A. Bloch was also supported in part by NSF grant DMS-2103026. Toyota Research Institute provided funds to support this work. Funding for M. Ghaffari was in part
provided by NSF Award No. 2118818. The authors thank the anonymous reviewers for many constructive feedback and in-depth review of this work.}

\appendix
\section{Required Mathematical Machinery and Notation}
\label{sec:prelim}

We review some mathematical preliminaries to establish the notation.

\subsection{Matrix Lie Group of Motion in $\mathbb{R}^n$}
 \label{subsec:liesen}
 
The general linear group of degree $n$, denoted \mbox{$\GL_n(\mathbb{R})$}, is the set of all $n\times n$ real invertible matrices, where the group binary operation is the ordinary matrix multiplication. The $n$-dimensional special orthogonal group, denoted  
\begin{equation}
  \nonumber \SO(n) = \{{R}\in \GL_n(\mathbb{R}) |~ {R} {R}^\transpose = {I}_n, \operatorname{det} {R} = +1\},
\end{equation}
is the rotation group on $\mathbb{R}^n$. The $n$-dimensional special Euclidean group, denoted 
\begin{equation}
  \nonumber \SE(n) = \left\{ {h} = \left[\begin{array}{cc} {R} & {T} \\ 0 & 1 \end{array} \right] \in \GL_{n+1}(\mathbb{R}) |~ {R} \in \SO(n), {T} \in \mathbb{R}^n \right\},
\end{equation}
is the group of rigid transformations, i.e., \emph{direct isometries}, on $\mathbb{R}^n$. A transformation such as \mbox{${h} \in \SE(3)$} is the parameter space  in many sensor registration problems which consists of a rotation and a translation components. Let \mbox{$\bar{{h}} \in \SE(3)$} be an estimate of ${h}$. We compute the rotational and translational distances using $\lVert{\log(\bar{{R}} {R}^{\transpose})}\rVert_{\mathrm{F}}$ and $\lVert{\bar{{T}} - \bar{{R}} {R}^{\transpose} {T}} \rVert$, respectively, where $\log(\cdot)$ is the Lie logarithm map which is, here, the matrix logarithm. These definitions are consistent with the transformation distance that can be directly computed using $\lVert{\log (\bar{{h}} {h}^{-1})}\rVert_{\mathrm{F}}$. Here, $\lVert A \rVert^2_{\mathrm{F}} = \tr(A^{\transpose}A)$ is the Frobenius norm.

This paper studies not only matrix Lie groups, but also their actions on manifolds. We have the following definition:
\begin{definition}
Let $\Gcal$ be a group and $X$ a set. A (left) group action of $\Gcal$ on $X$, denoted as $\Gcal\curvearrowright X$, is a group homomorphism $\Gcal\to \emph{Aut}(X)$ (automorphism of $X$). If $X$ is a smooth manifold, the action is smooth if $\Gcal\to \diff(X)$ (diffeomorphism of $X$).
\end{definition}
\begin{remark}
A group action can similarly be viewed as a function $\varphi:\Gcal\times X\to X$ satisfying two conditions (where $\varphi(g,x)$ will be denoted by $g.x$)
\begin{enumerate}
    \item Identity: If $e\in \Gcal$ is the identity element, then $e.x=x$ for all $x\in X$.
    \item Compatibility: $(gh).x = g.(h.x)$ for all $g,h\in \Gcal$ and $x\in X$.
\end{enumerate}
\end{remark}
\begin{remark}
The standard action of $\SE(n)$ on $\mathbb{R}^n$ is given by $(R,T).x = Rx+T$ for $R\in \SO(n)$ and $T\in \mathbb{R}^n$.
\end{remark}
\subsection{Hilbert Space}
Let $V$ be a finite-dimensional vector space over the field of real numbers $\mathbb{R}$. An inner product on $V$ is a function \mbox{$\langle\cdot,\cdot\rangle: V\times V \to \mathbb{R}$} that is bilinear, symmetric, and positive definite. The pair $(V, \langle\cdot,\cdot\rangle)$ is called an inner product space. The inner product induces a norm that measures the magnitude or length of a vector:
	$\lVert {v} \rVert = \sqrt{\langle{v}, {v}\rangle}.$
The norm in turn induces a metric that allows for calculating the distance between two vectors:
	$d({v},{w}) = \lVert {v} - {w} \rVert = \sqrt{\langle{v} - {w}, {v} - {w}\rangle}.$
Such a metric is homogeneous; for $a\in\mathbb{R}$, $d(a {v}, a {w}) = |a| d({v}, {w})$, and translation invariant; $d({v} + {x}, {w} + {x}) = d({v}, {w})$. These properties are inherited from the induced norm. The distance metric is positive definite, symmetric, and satisfies the triangle inequality. In addition to measuring distances, it is important to be able to understand limits. This leads to the definition of a Cauchy sequence and completeness.

\begin{definition}[Cauchy Sequence]
A Cauchy sequence is a sequence $\{x_i\}_{i=1}^{\infty}$ such that for any real number $\epsilon > 0$ there exists a natural number $\bar{n} \in \mathbb{N}$ such that for some distance metric $d(x_n, x_m) < \epsilon$ for all $n,m > \bar{n}$.
\end{definition}

\begin{definition}[Completeness]
A metric space $(M,d)$ is complete if every Cauchy sequence in $M$ converges in $M$, i.e., to a limit that is in $M$.
\end{definition}

Such a metric space contains all its limit points. Note that completeness is with respect to the metric $d$ and not the topology of the space. Now, we can give a definition for a Hilbert space.

\begin{definition}[Hilbert Space]
A Hilbert space, $\Hcal$, is a complete inner product space; that is, any Cauchy sequence in $\Hcal$, using the metric induced by the inner product, converges to an element in $\Hcal$.
\end{definition}

The typical example for a Hilbert space is the space of square-integrable functions on $\mathbb{R}$, i.e., $\mathcal{H} = L^2(\mathbb{R},\mu)$ where $\mu$ is the Lebesgue measure on $\mathbb{R}$. The inner product is defined as:
\begin{equation}
\label{eq:ipl2}
	\langle f, g\rangle_{\Hcal} := \int f({x}) g({x}) \mathrm{d}\mu({x}).
\end{equation}
 Similarly, the induced norm by the inner product is \mbox{$\lVert f \rVert_{\Hcal} = \sqrt{\langle f, f\rangle_{\Hcal}}$}. The Hilbert space of functions can be thought of as an infinite-dimensional counterpart of the finite-dimensional vector spaces discussed earlier. However, the Hilbert spaces of interest in this work will be reproducing kernel Hilbert spaces discussed below.

\subsection{Representation and Reproducing Kernel Hilbert Space}
\label{sec:representation_rkhs}

We now move to a more special type of Hilbert Space called Reproducing Kernel Hilbert Space (RKHS)~\citep{berlinet2004reproducing} which we will use in this work.

\begin{definition}[Kernel]
Let ${x}, {x}' \in \mathcal{X}$ be a pair of inputs for a function $k:\mathcal{X} \times \mathcal{X} \to \mathbb{R}$ known as the kernel. A kernel is symmetric if $k({x}, {x}')=k({x}',{x})$, and is positive definite if for any nonzero $f \in \Hcal~ (\text{or}~L^2(\Xcal,\mu))$: 
\begin{equation}
	\nonumber \int k({x}, {x}') f({x}) f({x}') \mathrm{d}\mu({x}) \mathrm{d}\mu({x}') > 0.
\end{equation}
\end{definition}

\begin{definition}[Reproducing Kernel Hilbert Space]
Let $\Hcal$ be a real-valued Hilbert space on a non-empty set $\Xcal$. A function $k:\Xcal \times \Xcal \to \mathbb{R}$ is a reproducing kernel of the Hilbert space $\Hcal$ iff:
\begin{enumerate}
	\item $\forall {x} \in \Xcal, \quad k(\cdot, {x}) \in \Hcal$,
	\item $\forall {x} \in \Xcal, \quad \forall f \in \Hcal \quad \langle f, k(\cdot, {x}) \rangle = f({x})$.
\end{enumerate}
The Hilbert space $\Hcal$ (RKHS) which possesses a reproducing kernel $k$ is called  a Reproducing Kernel Hilbert Space or a proper Hilbert space.
\end{definition}

The second property is called \emph{the reproducing property}; that is using the inner product of $f$ with $k(\cdot,{x})$, the value of function $f$ is reproduced at point ${x}$. Also, using both conditions we have: $\forall {x}, {z} \in \Xcal$, $k({x},{z}) = \langle k(\cdot, {x}), k(\cdot, {z}) \rangle$.

\begin{lemma}
Any reproducing kernel is a positive definite function~\citep{berlinet2004reproducing}.
\end{lemma}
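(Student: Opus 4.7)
The core idea is to exploit the reproducing property to rewrite the kernel as an inner product of elements in $\Hcal$, after which positivity falls out of the basic positivity of the inner product. Specifically, combining the two conditions in the definition of an RKHS gives
\begin{equation*}
k(x,x') = \langle k(\cdot,x), k(\cdot,x')\rangle_{\Hcal},
\end{equation*}
since $k(\cdot,x') \in \Hcal$ and evaluating the function $k(\cdot,x)$ at the point $x'$ by the reproducing property yields exactly this inner product. This rewriting is the entire content of the proof; everything else is bookkeeping.

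My plan is to first handle the discrete version (which is the standard textbook formulation of positive definiteness) and then lift it to the integral formulation used in the paper. For the discrete case, given any finite collection $\{x_i\}\subset \Xcal$ and scalars $\{c_i\}\subset\mathbb{R}$, I would substitute the identity above into the quadratic form and pull the scalars inside the inner product by bilinearity:
\begin{equation*}
\sum_{i,j} c_i c_j \, k(x_i,x_j)
= \sum_{i,j} c_i c_j \langle k(\cdot,x_i),k(\cdot,x_j)\rangle_{\Hcal}
= \Bigl\langle \sum_i c_i k(\cdot,x_i),\, \sum_j c_j k(\cdot,x_j)\Bigr\rangle_{\Hcal}
= \Bigl\lVert \sum_i c_i k(\cdot,x_i)\Bigr\rVert_{\Hcal}^2 \geq 0.
\end{equation*}

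To adapt this to the integral formulation $\int k(x,x')f(x)f(x')\,d\mu(x)\,d\mu(x')\geq 0$ that appears in the paper's definition of positive definiteness, I would apply Fubini's theorem to interchange the order of integration and, by continuity of the inner product, move the integrals inside $\langle\cdot,\cdot\rangle_{\Hcal}$ to obtain $\bigl\lVert \int f(x) k(\cdot,x)\,d\mu(x)\bigr\rVert_{\Hcal}^2 \geq 0$. This gives the non-strict inequality immediately.

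The one subtle point, and what I'd flag as the main obstacle, is the \emph{strict} inequality when $f$ is nonzero (the paper's definition requires $>0$). This requires showing that the Bochner-type integral $\int f(x) k(\cdot, x)\,d\mu(x)$ is a nonzero element of $\Hcal$ whenever $f\neq 0$, which in turn uses that $\{k(\cdot,x): x\in\Xcal\}$ is total in $\Hcal$ (indeed, its closed linear span is $\Hcal$, for otherwise there would be a nonzero $g\in\Hcal$ orthogonal to every $k(\cdot,x)$, forcing $g(x)=\langle g, k(\cdot,x)\rangle = 0$ for all $x$ by the reproducing property, a contradiction). Handling measurability and the Bochner integral rigorously is the only technical wrinkle; everything else is a one-line calculation off the reproducing identity.
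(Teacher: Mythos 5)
Your central calculation is correct and is exactly the standard argument behind this lemma: by the two defining properties of an RKHS, $k(x,x') = \langle k(\cdot,x),k(\cdot,x')\rangle_{\Hcal}$, so the quadratic form collapses to $\bigl\lVert \sum_i c_i k(\cdot,x_i)\bigr\rVert_{\Hcal}^2 \geq 0$, and likewise $\bigl\lVert \int f(x)\,k(\cdot,x)\,\mathrm{d}\mu(x)\bigr\rVert_{\Hcal}^2 \geq 0$ in the integral formulation. The paper itself offers no proof and simply cites Berlinet and Thomas-Agnan, where this is precisely the argument given, so up to the non-strict inequality your route coincides with the intended one.

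The genuine gap is in your last paragraph, on strict positivity. You claim that $\int f(x)\,k(\cdot,x)\,\mathrm{d}\mu(x)\neq 0$ for nonzero $f$ because the family $\{k(\cdot,x):x\in\Xcal\}$ is total in $\Hcal$. Totality of the kernel sections in $\Hcal$ is automatic (your orthogonality argument is fine and is part of the Moore--Aronszajn picture), but it does not give injectivity of the synthesis map $f\mapsto \int f(x)\,k(\cdot,x)\,\mathrm{d}\mu(x)$ from $L^2(\mu)$ into $\Hcal$: since $\bigl\langle \int f(x)k(\cdot,x)\,\mathrm{d}\mu(x), g\bigr\rangle_{\Hcal} = \int f(x)g(x)\,\mathrm{d}\mu(x)$ for $g\in\Hcal$, injectivity is governed by whether $\Hcal$ is dense in $L^2(\mu)$, which can fail. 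A concrete counterexample: let $\Xcal=\{a,b\}$ with counting measure and $k\equiv 1$, the reproducing kernel of the one-dimensional space of constant functions; the single section $k(\cdot,x)=1$ is trivially total in that $\Hcal$, yet for $f(a)=1$, $f(b)=-1$ the quadratic form equals $(f(a)+f(b))^2=0$ with $f\neq 0$. So strict positivity in the sense of the paper's displayed definition is not a consequence of the reproducing property alone; the lemma holds only in the non-strict (positive semi-definite, or ``positive type'') sense, which is what the cited reference proves. The honest fix is either to prove and state the inequality as $\geq 0$, reading ``positive definite'' in that weaker sense, or to add a genuine extra hypothesis (injectivity of the integral operator associated with $k$ on $L^2(\mu)$, equivalently density of $\Hcal$ in $L^2(\mu)$) rather than appealing to totality.
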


Finding a reproducing kernel of an RKHS might seem difficult, but fortunately, there is a one-to-one relation between a reproducing kernel and its associated RKHS, and such a reproducing kernel is unique. Therefore, our problem reduces to finding an appropriate kernel.

\begin{theorem}[Moore-Aronszajn Theorem,~\citealp{berlinet2004reproducing}]
\label{th:ma}
Let $k$ be a positive definite function on $\Xcal \times \Xcal$. There exists only one Hilbert space $\Hcal$ of functions on $\Xcal$ with $k$ as reproducing kernel. The subspace $\Hcal_0$ of $\Hcal$ spanned by the function $k(\cdot, {x}), {x} \in \Xcal$ is dense~\footnote{A dense subset of $M$ implies the closure of the subset $X$ equals $M$.} in $\Hcal$ and $\Hcal$ is the set of functions on $\Xcal$ which are point-wise limits of Cauchy sequence in $\Hcal_0$ with the inner product 
\begin{equation}
\label{eq:iprkhs}
	\langle f, g \rangle_{\Hcal_0} = \sum_{i=1}^n \sum_{j=1}^m \alpha_i \beta_j k({z}_j, {x}_i),
\end{equation}
where $f =  \sum_{i=1}^n \alpha_i k(\cdot, {x}_i)$ and $g =  \sum_{j=1}^m \beta_j k(\cdot, {z}_j)$.
\end{theorem}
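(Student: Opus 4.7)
The plan is to construct $\Hcal$ explicitly as the completion of $\Hcal_0 := \operatorname{span}\{k(\cdot,x) : x\in\Xcal\}$ under the inner product given in \eqref{eq:iprkhs}, and then to verify uniqueness separately. First I would show that \eqref{eq:iprkhs} is well defined on $\Hcal_0$: if $f = \sum_i \alpha_i k(\cdot, x_i) = \sum_{i'} \alpha'_{i'} k(\cdot, x'_{i'})$, then using the candidate formula one checks that $\langle f,g\rangle$ depends only on $f$ (not on the expansion) by rewriting $\langle f,g\rangle = \sum_j \beta_j f(z_j)$, and symmetrically only on $g$. Bilinearity and symmetry are immediate from the formula; positive semidefiniteness, $\langle f,f\rangle = \sum_{i,j}\alpha_i\alpha_j k(x_i,x_j)\geq 0$, is exactly the hypothesis that $k$ is positive definite. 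To upgrade from semidefinite to definite, I would invoke Cauchy--Schwarz (valid on any positive semidefinite form): for each $x\in\Xcal$,
\begin{equation*}
|f(x)|^2 = |\langle f, k(\cdot,x)\rangle|^2 \leq \langle f,f\rangle \cdot k(x,x),
\end{equation*}
so $\langle f,f\rangle = 0$ forces $f\equiv 0$ pointwise. Thus $(\Hcal_0,\langle\cdot,\cdot\rangle)$ is a genuine inner product space, and the reproducing property $\langle f, k(\cdot,x)\rangle = f(x)$ holds by construction on $\Hcal_0$.

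Next I would form the abstract Hilbert space completion $\widetilde{\Hcal}_0$ (equivalence classes of Cauchy sequences in $\Hcal_0$) and \emph{identify each class with an actual function on $\Xcal$}. This is the conceptual heart of the argument: given a Cauchy sequence $(f_n)\subset\Hcal_0$, the inequality $|f_n(x)-f_m(x)| \leq \lVert f_n - f_m\rVert\sqrt{k(x,x)}$ (again Cauchy--Schwarz via the reproducing property) shows $(f_n(x))$ is Cauchy in $\mathbb{R}$ for every $x$, so the pointwise limit $f(x):=\lim_n f_n(x)$ exists. I would then check that two equivalent Cauchy sequences yield the same pointwise limit, so the map $\widetilde{\Hcal}_0 \to \mathbb{R}^{\Xcal}$ is well defined and injective, and I would take $\Hcal$ to be its image, equipped with the transported inner product. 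By density of $\Hcal_0$ in $\Hcal$ and continuity of point evaluation (established by the same Cauchy--Schwarz bound), the reproducing property extends from $\Hcal_0$ to all of $\Hcal$: for $f\in\Hcal$ with $f_n\to f$, $\langle f,k(\cdot,x)\rangle = \lim_n \langle f_n,k(\cdot,x)\rangle = \lim_n f_n(x) = f(x)$. This establishes existence of an RKHS with kernel $k$ having the stated explicit description.

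For uniqueness, suppose $\Hcal'$ is another Hilbert space of functions on $\Xcal$ with $k$ as reproducing kernel. Then $\Hcal'$ contains every $k(\cdot,x)$, hence contains $\Hcal_0$, and the reproducing property in $\Hcal'$ forces its inner product to coincide with \eqref{eq:iprkhs} on $\Hcal_0$ (expand and apply reproducing twice). Since $\Hcal_0$ must be dense in $\Hcal'$ as well --- otherwise the nonzero orthogonal complement would contain some $g\in\Hcal'$ with $g(x) = \langle g,k(\cdot,x)\rangle = 0$ for all $x$, contradicting $g\neq 0$ --- the completion of $\Hcal_0$ embeds isometrically as all of $\Hcal'$, and the identification of completion elements with pointwise limits of Cauchy sequences is forced by the Cauchy--Schwarz bound above. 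Thus $\Hcal'=\Hcal$ as sets of functions with identical inner products.

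The main obstacle I expect is the identification step in the middle paragraph: abstract Hilbert space completion gives equivalence classes, not functions, and one must use the reproducing property (which itself only holds \emph{a priori} on the uncompleted $\Hcal_0$) to ``coordinatize'' the completion by pointwise values while simultaneously showing no two distinct completion classes have the same pointwise limit. The Cauchy--Schwarz bound $|f_n(x)-f_m(x)|\leq \lVert f_n-f_m\rVert\sqrt{k(x,x)}$ is the single tool that makes all three tasks --- pointwise convergence, well-definedness on equivalence classes, and continuity of point evaluation --- go through simultaneously, so I would foreground it early.
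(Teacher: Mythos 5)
The paper itself gives no proof of this theorem---it is quoted directly from \citet{berlinet2004reproducing}---so your proposal can only be measured against the standard Moore--Aronszajn argument, and that is exactly the route you take: well-definedness of \eqref{eq:iprkhs} by rewriting $\langle f,g\rangle=\sum_j\beta_j f(z_j)$, nonnegativity of $\langle f,f\rangle$ from positive definiteness of $k$ (note you are implicitly using the finite Gram-matrix sense of positive definiteness, which is what the cited reference means, rather than the $L^2$-integral definition stated earlier in the appendix), strict definiteness and continuity of point evaluation from the bound $|f(x)|\le\lVert f\rVert\sqrt{k(x,x)}$, completion, extension of the reproducing property by continuity, and uniqueness via the orthogonal-complement argument showing $\Hcal_0$ is dense in any competing RKHS. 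All of those steps are correct.

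The one genuine gap is the injectivity of the map from the abstract completion $\widetilde{\Hcal}_0$ into $\mathbb{R}^{\Xcal}$, which you assert in the same breath as well-definedness and later attribute to the same Cauchy--Schwarz bound. That bound controls pointwise values by the norm (``norm-small $\Rightarrow$ pointwise small''), which gives existence of pointwise limits, independence of the representative, and continuity of evaluation; injectivity is the converse implication and does not follow from it. What must be proved is: if $(f_n)\subset\Hcal_0$ is Cauchy in norm and $f_n(x)\to 0$ for every $x\in\Xcal$, then $\lVert f_n\rVert\to 0$. The standard argument uses that each $f_n$ is a \emph{finite} combination $f_n=\sum_i\alpha_i k(\cdot,x_i)$: given $\varepsilon>0$ choose $N$ with $\lVert f_n-f_m\rVert<\varepsilon$ for all $n,m\ge N$; then for fixed $n\ge N$,
\begin{equation*}
\lVert f_n\rVert^2=\langle f_n,f_n-f_m\rangle+\langle f_n,f_m\rangle\le\varepsilon\lVert f_n\rVert+\sum_i\alpha_i f_m(x_i),
\end{equation*}
and letting $m\to\infty$ the last sum vanishes because $f_m\to 0$ pointwise at the finitely many $x_i$, so $\lVert f_n\rVert\le\varepsilon$. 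Without this lemma the transported inner product on the image of $\widetilde{\Hcal}_0$ is not well defined (two distinct completion classes could map to the same function), and it is also the fact that actually ``forces'' the identification invoked in the last sentence of your uniqueness paragraph. With this lemma inserted, your proof is complete and matches the proof in the cited reference.
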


The important property while working in an RKHS is that the convergence in norm implies point-wise convergence; the converse need not be true. In other words, if two functions in an RKHS are close in the norm sense, they are also close point-wise. We will rely on this property to solve the problem discussed in this paper. In Theorem~\ref{th:ma}, $f$ and $g$ are defined only in $\Hcal_0$. The following theorem known as the representer theorem ensures that the solution of minimizing the regularized risk functional admits such a representation.

\begin{theorem}[Nonparametric Representer Theorem,~\citealp{scholkopf2001generalized}]\label{th:representer}
Let $\Xcal$ be a nonempty set and $\Hcal_k$ be an RKHS with reproducing kernel $k$ on $\Xcal \times \Xcal$. Suppose we are given a training sample $({x}_1,y_1),\dots,({x}_m,y_m) \in \Xcal \times \mathbb{R}$, a strictly monotonically increasing real-valued function $h$ on $[0, \infty)$, an arbitrary cost function $c:(\Xcal \times \mathbb{R}^2)^m \to \mathbb{R} \cup \{\infty\}$, and a class of functions~\footnote{$\mathbb{R}^{\Xcal}$ is the space of functions mapping $\Xcal$ to $\mathbb{R}$.}
\begin{align}
\nonumber \Fcal = \{f \in \mathbb{R}^{\Xcal} | f(\cdot) = \sum_{i=1}^\infty \beta_i k(\cdot, {z}_i), \beta_i \in \mathbb{R}, {z}_i \in \Xcal, \lVert f \rVert_{\Hcal_{k}} < \infty \},
\end{align}
where $\lVert \cdot \rVert_{\Hcal_{k}}$ is the induced norm of the RKHS $\Hcal_k$ associated with $k$.
Then any $f \in \Fcal$ minimizing the regularized risk functional 
\begin{equation}
\nonumber c(({x}_1,y_1,f(x_1)),\dots,({x}_m,y_m,f({x}_m))) + h(\lVert f \rVert_{\Hcal_{k}})
\end{equation}
admits a representation of the form
\begin{equation}
	f(\cdot) =  \sum_{i=1}^m \alpha_i k(\cdot, {x}_i).
\end{equation}
\end{theorem}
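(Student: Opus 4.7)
The plan is to use the classical orthogonal decomposition argument, which exploits the reproducing property to show that any component of a candidate $f$ orthogonal to $\mathrm{span}\{k(\cdot,x_i)\}_{i=1}^m$ is invisible to the cost term but strictly penalized by the regularizer.

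First, I would set $\Hcal_X := \mathrm{span}\{k(\cdot,x_1),\ldots,k(\cdot,x_m)\} \subset \Hcal_k$. Since this is a finite-dimensional subspace of a Hilbert space, it is closed, and every $f \in \Fcal$ (which lies in $\Hcal_k$ because $\lVert f\rVert_{\Hcal_k} < \infty$) admits a unique orthogonal decomposition
\begin{equation*}
    f = f_\parallel + f_\perp,\quad f_\parallel \in \Hcal_X,\quad f_\perp \in \Hcal_X^\perp.
\end{equation*}
By construction, $f_\parallel$ has the form $\sum_{i=1}^m \alpha_i k(\cdot,x_i)$, so the goal reduces to proving that any minimizer satisfies $f_\perp = 0$.

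Next, I would show that $f_\perp$ does not affect the cost term. By the reproducing property, for each training input $x_j$,
\begin{equation*}
    f(x_j) = \langle f, k(\cdot,x_j)\rangle_{\Hcal_k} = \langle f_\parallel, k(\cdot,x_j)\rangle_{\Hcal_k} + \langle f_\perp, k(\cdot,x_j)\rangle_{\Hcal_k} = f_\parallel(x_j),
\end{equation*}
since $k(\cdot,x_j) \in \Hcal_X$ and $f_\perp \perp \Hcal_X$. Hence the cost $c\bigl((x_1,y_1,f(x_1)),\ldots,(x_m,y_m,f(x_m))\bigr)$ depends only on $f_\parallel$ and is identical for $f$ and for $f_\parallel$.

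For the regularizer, the Pythagorean identity gives $\lVert f\rVert_{\Hcal_k}^2 = \lVert f_\parallel\rVert_{\Hcal_k}^2 + \lVert f_\perp\rVert_{\Hcal_k}^2 \geq \lVert f_\parallel\rVert_{\Hcal_k}^2$, with equality iff $f_\perp = 0$. Because $h$ is strictly monotonically increasing on $[0,\infty)$, this yields $h(\lVert f\rVert_{\Hcal_k}) \geq h(\lVert f_\parallel\rVert_{\Hcal_k})$, again with equality iff $f_\perp = 0$. Combining the two observations, the regularized risk at $f_\parallel$ is no larger than at $f$, and is strictly smaller unless $f_\perp = 0$. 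So any minimizer must lie in $\Hcal_X$, giving the claimed finite expansion $f = \sum_{i=1}^m \alpha_i k(\cdot,x_i)$. The only mildly delicate point to check is that the orthogonal projection onto $\Hcal_X$ is well-defined inside $\Fcal$ (i.e., that $f_\parallel$ still belongs to the stated function class), which follows because $\Hcal_X$ is finite-dimensional and thus closed, and because $f_\parallel$ is by definition a finite sum of kernel sections with finite norm. The main conceptual step, rather than an obstacle, is simply recognizing that the reproducing property pins the cost entirely to $\Hcal_X$ while the strict monotonicity of $h$ prohibits any ``free'' orthogonal mass.
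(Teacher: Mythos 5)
Your orthogonal-decomposition argument is correct: the reproducing property pins the cost to the span $\Hcal_X$ of the kernel sections at the training inputs, the Pythagorean identity together with the strict monotonicity of $h$ strictly penalizes any orthogonal component, and hence every minimizer lies in $\Hcal_X$, which is exactly the claimed finite representation. The paper states this theorem as background without proof, citing \citet{scholkopf2001generalized}, and your argument is precisely the standard proof given in that reference, so there is nothing further to reconcile.
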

\subsection{Riemannian Geometry}

Let $M$ be a smooth manifold. This paper will be concerned  with the problem of finding the maximum of a function $f:M\to\mathbb{R}$. This will be accomplished (locally) via gradient ascent. Due to the fact that $M$ will not generally be Euclidean, we will be using tools from Riemannian geometry.

Recall that a vector field $X:M\to TM$ is a section of the tangent bundle, i.e. for $\pi:TM\to M$ we have $\pi\circ X = \mathrm{Id}$. The set of all (smooth) vector fields on $M$ will be denoted by $\mathfrak{X}(M)$. In contrast to vector fields, 1-forms are sections of the cotangent bundle, $M\to T^*M$ and the set of all 1-forms are denoted by $\Omega^1(M)$. 

For a function $f:M\to\mathbb{R}$, the differential is naturally a covector rather than a vector: $df_x:T_xM\to\mathbb{R}$
\begin{equation*}
    df_x(v) = \left.\frac{d}{dt}\right|_{t=0} \, f\left(\gamma(t)\right),\quad \gamma(0)=x,\quad \dot\gamma(0) = v.
\end{equation*}
In order to determine the gradient, $\nabla f\in \mathfrak{X}(M)$, we need an identification $\Omega^1(M)\to \mathfrak{X}(M)$ which is where the Riemannian metric will be used.
\begin{definition}
    A Riemannian metric on a manifold $M$ is the assignment to each point $p\in M$, of an inner product $\langle\cdot,\cdot\rangle_p$ on $T_pM$ which is smooth in the following sense: for $X,Y\in\mathfrak{X}(M)$, the map $p\mapsto \langle X(p),Y(p)\rangle_p$ is a smooth function on $M$.
\end{definition}
This gives a way to turn $df$ into $\nabla f$ as follows:
\begin{equation*}
    df = \langle \nabla f,\cdot \rangle.
\end{equation*}

On a general manifold, if we wish to differentiate a vector field $Y$ in the direction of $X$ (i.e. a directional derivative), we cannot compute it in the traditional sense since different tangent vectors lie in different tangent spaces so we cannot take their differences. A way around this issue is to use a connection (see, e.g., \textsection 6 of \citealt{diffGeometry}):
\begin{definition}
    An affine connection on a manifold, $M$, is a $\mathbb{R}$-bilinear map $\nabla:\mathfrak{X}(M)\times\mathfrak{X}(M)\to\mathfrak{X}(M)$ such that for all $f\in C^\infty(M)$ and $X,Y\in\mathfrak{X}(M)$:
    \begin{itemize}
        \item[i.] $\nabla_{fX}Y = f\nabla_XY$,
        \item[ii.] $\nabla_XfY = \mathcal{L}_Xf\cdot Y + f\nabla_XY$,
    \end{itemize}
    where $\mathcal{L}$ is the usual Lie derivative~\citep[Section 20]{tu2011introduction}.
\end{definition}
For a Riemannian metric there exists a unique affine connection, called the Levi-Civita connection that is both torsion-free and compatible with the metric. A connection is compatible with the metric if for all $X,Y,Z\in\mathfrak{X}(M)$,
\begin{equation*}
    Z\langle X,Y\rangle = \langle \nabla_ZX,Y\rangle + \langle X,\nabla_ZY\rangle,
\end{equation*}
and torsion-free if
\begin{equation*}
    \nabla_XY-\nabla_YX = [X,Y].
\end{equation*}
This connection can be constructed in the following way:
\begin{equation}\label{eq:LC_connection}
    \begin{split}
        2\langle \nabla_XY,Z\rangle &= 
        X\langle Y,Z\rangle + Y\langle Z,X\rangle - Z\langle X,Y\rangle \\
        &\quad - \langle X,[Y,Z]\rangle + \langle Y,[Z,X]\rangle + \langle Z,[X,Y]\rangle.
    \end{split}
\end{equation}
\begin{remark}
    Suppose that $\Gcal$ is a commutative Lie group and we choose an orthogonal basis, $\{e_i\}$, of its Lie algebra $\mathfrak{g}$. If we let $\{E_i\}$ be the corresponding left-invariant vector fields, then the connection vanishes in these coordinates: $\nabla_{E_i}E_j=0$ for all $i,j$. This will be useful in Sections \ref{sec:circle_example} and \ref{sec:torus} where we compute the registration problem for $\Gcal=S^1$ and $\Gcal = T^2$.
\end{remark}
The Levi-Civita connection is used to define a geodesic on a Riemannian manifold, i.e. a ``straight line.''
\begin{definition}
    A geodesic is a curve $\gamma:I\to M$ where $I$ is an interval in $\mathbb{R}$ and
    \begin{equation*}
        \nabla_{\dot\gamma}\dot\gamma = 0,
    \end{equation*}
    where $\nabla_{\dot\gamma}\dot\gamma$ is the covariant derivative of the velocity vector field $\dot\gamma(t) = \frac{d}{dt} \gamma(t)$.
\end{definition}
\begin{theorem}
Let $\gamma:I\to M$ be a curve. Then the following are equivalent:
\begin{enumerate}
    \item $\gamma$ is a geodesic,
    \item In coordinates $\gamma = (y^1,\ldots,y^n)$, the curve satisifies the following system of differential equations
    $$\ddot{y}^k + \sum_{ij} \Gamma^k_{ij}\dot{y}^i\dot{y}^j = 0,$$
    where $\Gamma_{ij}^k$ are the Christoffel symbols~\citep[Sec. 13.3, p. 99]{diffGeometry}, and
    \item $\gamma$ satisfies the Euler-Lagrange equations with Lagrangian 
    $L(y,\dot{y}) = \frac{1}{2}\langle \dot{y},\dot{y}\rangle_y.$
\end{enumerate}
\end{theorem}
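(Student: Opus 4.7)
The plan is to prove the equivalence of the three characterizations by passing to local coordinates and reducing each formulation to the same second-order ODE system displayed in item~(2). It suffices to establish (1)~$\Leftrightarrow$~(2) via the coordinate expression for the covariant derivative of a vector field along a curve, and (2)~$\Leftrightarrow$~(3) via a direct Euler--Lagrange computation applied to the kinetic energy Lagrangian.

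First I would handle (1)~$\Leftrightarrow$~(2). Picking local coordinates so that $\dot\gamma = \sum_i \dot y^i \partial_i$ along $\gamma$, I would expand $\nabla_{\dot\gamma}\dot\gamma$ using $\mathbb{R}$-bilinearity of $\nabla$, the Leibniz property $\nabla_X(fY) = \mathcal{L}_X f \cdot Y + f \nabla_X Y$, and the defining relation $\nabla_{\partial_i}\partial_j = \sum_k \Gamma^k_{ij}\partial_k$. The $\mathcal{L}_{\dot\gamma} \dot y^k$ terms produce the $\ddot y^k$ contribution, while the connection coefficients pick up the quadratic terms, yielding
$$\nabla_{\dot\gamma}\dot\gamma = \sum_k \Big(\ddot y^k + \sum_{i,j}\Gamma^k_{ij}\dot y^i \dot y^j\Big)\partial_k.$$
Since $\{\partial_k\}$ is a basis of $T_{\gamma(t)}M$, the vanishing of the left-hand side is equivalent to the system in~(2).

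Next I would handle (2)~$\Leftrightarrow$~(3). Writing the Lagrangian in coordinates as $L(y,\dot y) = \tfrac{1}{2}\sum_{i,j} g_{ij}(y)\dot y^i \dot y^j$, I would compute the three pieces of $\tfrac{d}{dt}\partial_{\dot y^k}L - \partial_{y^k}L = 0$ and collect terms. After symmetrizing the $\dot y^i \dot y^j$ coefficient in $i,j$, the Euler--Lagrange equation reduces to
$$\sum_i g_{ki}\ddot y^i + \tfrac{1}{2}\sum_{i,j}\big(\partial_i g_{jk} + \partial_j g_{ik} - \partial_k g_{ij}\big)\dot y^i \dot y^j = 0.$$
Raising the index $k$ with $g^{lk}$ and summing reproduces (2), provided the bracketed combination can be identified with the Christoffel symbols of the first kind so that $\Gamma^l_{ij} = \sum_k g^{lk}[ij,k]$.

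The main (and really only non-routine) obstacle is verifying that the Christoffel symbols extracted from the connection via $\nabla_{\partial_i}\partial_j = \sum_k \Gamma^k_{ij}\partial_k$ agree with the metric-derivative expression arising from the Lagrangian computation. This identity is precisely the content of the Koszul formula~\eqref{eq:LC_connection}: specializing $X=\partial_i$, $Y=\partial_j$, $Z=\partial_k$ and using $[\partial_i,\partial_j]=0$ yields $2\langle \nabla_{\partial_i}\partial_j,\partial_k\rangle = \partial_i g_{jk} + \partial_j g_{ik} - \partial_k g_{ij}$, and invertibility of the metric then gives the desired formula for $\Gamma^l_{ij}$. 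Once this identification is in hand, both halves of the argument reduce to bookkeeping.
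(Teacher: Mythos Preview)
Your proposal is correct and follows the standard textbook route: expand $\nabla_{\dot\gamma}\dot\gamma$ in coordinates for (1)$\Leftrightarrow$(2), compute the Euler--Lagrange equations for the kinetic Lagrangian for (2)$\Leftrightarrow$(3), and tie the two together via the Koszul formula~\eqref{eq:LC_connection} to identify the Christoffel symbols. There are no gaps.

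The paper, however, does not actually prove this theorem. It states the result with a citation to a differential geometry text and then follows it with a worked example in the case $M=\mathbb{R}^2$ with a general metric $m(p)$, deriving the Euler--Lagrange equations explicitly and reading off the six Christoffel symbols $\Gamma^x_{xx},\Gamma^x_{xy},\ldots$ in terms of the entries $m_{ij}$ and their partial derivatives. So the paper's ``proof'' is really an illustrative computation of (2)$\Leftrightarrow$(3) in two dimensions, whereas your argument handles the general $n$-dimensional case and also supplies the (1)$\Leftrightarrow$(2) direction that the paper omits entirely. Your treatment is strictly more complete; the paper's example has the modest advantage of being fully explicit and self-contained for readers unfamiliar with the Koszul formula.
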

\begin{example}
    Consider the case where $M=\mathbb{R}^2$ and the metric is given by $\langle u,v\rangle_p = u^\transpose m(p)v$ for a symmetric, positive-definite matrix $m$ which depends on the point $p=(x,y)\in\mathbb{R}^2$. The Euler-Lagrange equations are
    \begin{equation*}
        \frac{d}{dt}\frac{\partial L}{\partial \dot{x}} - \frac{\partial L}{\partial x} = 0, \quad 
        \frac{d}{dt}\frac{\partial L}{\partial \dot{y}} - \frac{\partial L}{\partial y} = 0.
    \end{equation*}
    Computing the first equation results with:
    \begin{equation*}
        \begin{split}
            \frac{d}{dt}\frac{\partial L}{\partial\dot{x}}
            &= \frac{d}{dt}\left(m_{11}\dot{x}+m_{12}\dot{y}\right) 
            = m_{11}\ddot{x}+m_{12}\ddot{y} + \frac{\partial m_{11}}{\partial x}\dot{x}^2 + \frac{\partial m_{12}}{\partial y}\dot{y}^2
            + \left(\frac{\partial m_{11}}{\partial y} + \frac{\partial m_{12}}{\partial x}\right)\dot{x}\dot{y},\\
            \frac{\partial L}{\partial x} &= 
            \frac{1}{2}\frac{\partial m_{11}}{\partial x} \dot{x}^2 + \frac{\partial m_{12}}{\partial x}\dot{x}\dot{y} + \frac{1}{2}\frac{\partial m_{22}}{\partial x} \dot{y}^2.
        \end{split}
    \end{equation*}
    Combining these (as well as performing the analogous computation for the $y$ equation) we obtain the Euler-Lagrange equations.
    \begin{equation*}
        \begin{split}
            & m_{11}\ddot{x} + m_{12}\ddot{y} + \frac{1}{2}\frac{\partial m_{11}}{\partial x}\dot{x}^2 + \left( \frac{\partial m_{12}}{\partial y} - \frac{1}{2}\frac{\partial m_{22}}{\partial x}\right)\dot{y}^2 
            + \frac{\partial m_{11}}{\partial y}\dot{x}\dot{y} = 0, \\
            & m_{12}\ddot{x} + m_{22}\ddot{y} + \left(\frac{\partial m_{12}}{\partial x} - \frac{1}{2}\frac{\partial m_{11}}{\partial y}\right) \dot{x}^2 + \frac{1}{2}\frac{\partial m_{22}}{\partial y} \dot{y}^2 + \frac{\partial m_{22}}{\partial x}\dot{x}\dot{y} = 0.
        \end{split}
    \end{equation*}
    Decoupling the acceleration terms produces
    \begin{equation*}
        \begin{split}
            \ddot{x} + \Gamma_{xx}^x\dot{x}^2 + \Gamma_{xy}^x\dot{x}\dot{y} + \Gamma_{yy}^x \dot{y}^2 &= 0, \\
            \ddot{y} + \Gamma_{xx}^y\dot{x}^2 + \Gamma_{xy}^y\dot{x}\dot{y} + \Gamma_{yy}^y \dot{y}^2 &= 0,
        \end{split}
    \end{equation*}
    where
    \begin{equation*}
        \begin{split}
            \Gamma_{xx}^x &= \frac{1}{\det(m)} \left(
            \frac{1}{2}m_{22}\frac{\partial m_{11}}{\partial x} - m_{12}\frac{\partial m_{12}}{\partial x} + \frac{1}{2}m_{12}\frac{\partial m_{11}}{\partial y}\right), \\
            \Gamma_{xy}^x &= \frac{1}{\det(m)} \left(
            m_{22}\frac{\partial m_{11}}{\partial y} - m_{12}\frac{\partial m_{22}}{\partial x} \right), \\
            \Gamma_{yy}^x &= \frac{1}{\det(m)} \left(
            m_{22}\frac{\partial m_{12}}{\partial y} - \frac{1}{2}m_{22}\frac{\partial m_{22}}{\partial x} - \frac{1}{2}m_{12}\frac{\partial m_{22}}{\partial y} \right), \\
            \Gamma_{xx}^y &= \frac{1}{\det(m)} \left(
            m_{11}\frac{\partial m_{12}}{\partial x} - \frac{1}{2}m_{12}\frac{\partial m_{11}}{\partial x} - \frac{1}{2}m_{11}\frac{\partial m_{11}}{\partial x}\right), \\
            \Gamma_{xy}^y &= \frac{1}{\det(m)} \left(
            m_{11}\frac{\partial m_{22}}{\partial x} - m_{12}\frac{\partial m_{11}}{\partial y} \right), \\
            \Gamma_{yy}^y &= \frac{1}{\det(m)} \left( \frac{1}{2}m_{12}\frac{\partial m_{22}}{\partial x} - m_{12}\frac{\partial m_{12}}{\partial y} + \frac{1}{2}m_{11}\frac{\partial m_{22}}{\partial y}\right).
        \end{split}
    \end{equation*}
\end{example}
In particular, due to the existence and uniqueness of solutions to ordinary differential equations, geodesics always exist locally. Moreover, when the manifold is a Lie group and the metric is invariant under left-translations, the geodesic equations can be described via the Euler-Poincar\'{e} equations~\citep{Bloch1996}.
\begin{definition}
    Let $\Gcal$ be a Lie group. The function $\ell_g:\Gcal\to\Gcal$, $h\mapsto gh$ for $g\in\Gcal$ is called left-translation by $g$. Its tangent lift, $\left(\ell_g\right)_*:T\Gcal\to T\Gcal$ is defined as follows: let $v\in T_h\Gcal$ and $\gamma:(-\varepsilon:\varepsilon)\to\Gcal$ such that $\gamma(0) = h$ and $\dot{\gamma}(0) = v$. Then, we have
    \begin{equation*}
        \left(\ell_g\right)_*v := \left.\frac{d}{dt}\right|_{t=0} \, \ell_g\circ\gamma(t). 
    \end{equation*}
\end{definition}
\begin{definition}
    For a Lie group, $\mathcal{G}$, a function $L:T\mathcal{G}\to\mathbb{R}$ is said to be a left-invariant Lagrangian if $\left(\ell_g\right)^*L = L$ for all $g \in \mathcal{G}$. That is,
    \begin{equation*}
        L(x,v) = L(gx,\left(\ell_g\right)_*v).
    \end{equation*}
\end{definition}
\begin{theorem}\label{th:EP}
    Let $\mathcal{G}$ be a Lie group and $L:T\mathcal{G}\to\mathbb{R}$ be a left-invariant Lagrangian. Let $\mathcal{L}:\mathfrak{g}\to\mathbb{R}$ be its restriction to the tangent space of $\mathcal{G}$ at the identity (the 
    corresponding Lie algebra). For a curve $g(t)\in \Gcal$, let 
    $$\xi(t) = g(t)^{-1}\cdot \dot{g}(t); \quad
    i.e.,\quad \xi(t) = \left(\ell_{g^{-1}}\right)_*\dot{g}(t).$$
    Then the following are equivalent:
    \begin{enumerate}
        \item $g(t)$ satisfies the Euler-Lagrange equations for $L$ on $\mathcal{G}$,
        \item The Euler-Poincar\'{e} equations hold:
        \begin{equation}\label{eq:eulerPoincare}
            \frac{d}{dt} \frac{\delta \mathcal{L}}{\delta\xi} = 
            \mathrm{ad}^*_\xi\frac{\delta\mathcal{L}}{\delta\xi}.
        \end{equation}
    \end{enumerate}
\end{theorem}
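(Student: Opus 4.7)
My strategy is to prove the equivalence via Hamilton's variational principle: $g(t)$ satisfies the Euler--Lagrange equations for $L$ iff it is a critical point of the action $S[g] = \int_a^b L(g(t),\dot g(t))\,dt$ under variations $\delta g$ vanishing at the endpoints. Using left-invariance, $L(g,\dot g) = \mathcal{L}(g^{-1}\dot g) = \mathcal{L}(\xi)$, so $S[g] = \int_a^b \mathcal{L}(\xi(t))\,dt$. The proof then reduces to identifying the class of $\delta\xi$'s that are \emph{induced} by endpoint-fixed variations $\delta g$ on the group, and showing that the associated critical-point condition is exactly~\eqref{eq:eulerPoincare}.

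The crux is to establish the identity relating $\delta\xi$ to the $\mathfrak{g}$-valued curve $\eta(t) := (\ell_{g^{-1}})_* \delta g(t)$. For a smooth family $g_\varepsilon(t)$ with $g_0 = g$, differentiating $\xi_\varepsilon = g_\varepsilon^{-1}\dot g_\varepsilon$ in $\varepsilon$ at $\varepsilon = 0$ and comparing with the $t$-derivative of $\eta$ yields
\begin{equation*}
    \delta\xi = \dot\eta + [\xi, \eta].
\end{equation*}
In the matrix-group setting this is transparent: from $\delta(g^{-1}) = -g^{-1}(\delta g)g^{-1}$ one computes $\delta\xi = -\eta\xi + g^{-1}\partial_t(g\eta) = \dot\eta + \xi\eta - \eta\xi$. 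The general case follows either by embedding in a faithful representation or by an intrinsic computation using the commutation of mixed partial derivatives applied to $g_\varepsilon(t)$ and the Maurer--Cartan structure equation.

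With this identity in hand, the variational computation is routine. The first variation reads
\begin{equation*}
    \delta S = \int_a^b \left\langle \frac{\delta \mathcal{L}}{\delta \xi},\, \dot\eta + \mathrm{ad}_\xi \eta \right\rangle dt.
\end{equation*}
Integrating the $\dot\eta$ term by parts (using $\eta(a) = \eta(b) = 0$, inherited from $\delta g(a) = \delta g(b) = 0$) and invoking the duality $\langle \mu, \mathrm{ad}_\xi \eta\rangle = \langle \mathrm{ad}_\xi^* \mu, \eta\rangle$ produces
\begin{equation*}
    \delta S = \int_a^b \left\langle -\frac{d}{dt}\frac{\delta \mathcal{L}}{\delta \xi} + \mathrm{ad}_\xi^* \frac{\delta \mathcal{L}}{\delta \xi},\, \eta \right\rangle dt.
\end{equation*}
Because $(\ell_{g})_*$ is a bijection at each $t$, $\eta$ ranges freely over smooth $\mathfrak{g}$-valued curves vanishing at the endpoints (given any such $\eta$, $\delta g := (\ell_g)_*\eta$ defines a valid variation); the fundamental lemma of the calculus of variations then gives $\delta S = 0$ for all admissible $\delta g$ iff~\eqref{eq:eulerPoincare} holds, establishing both directions of the equivalence.

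The main obstacle is the constrained-variations identity $\delta\xi = \dot\eta + [\xi,\eta]$; once this is secured every other step is a standard integration-by-parts argument. A secondary point worth verifying, and which I flagged above, is the surjectivity of the map $\delta g \mapsto \eta$ onto endpoint-vanishing curves in $\mathfrak{g}$, so that reducing from $T\mathcal{G}$ to $\mathfrak{g}$ does not discard any admissible direction and the two variational problems are genuinely equivalent.
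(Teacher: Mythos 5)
Your proof is correct: the constrained-variations identity $\delta\xi = \dot\eta + [\xi,\eta]$ is verified with the right signs, the integration by parts and the duality $\langle \mu,\mathrm{ad}_\xi\eta\rangle = \langle \mathrm{ad}^*_\xi\mu,\eta\rangle$ are applied properly, and you correctly flag and resolve the surjectivity of $\delta g \mapsto \eta$ so that both directions of the equivalence follow. The paper itself gives no argument, deferring to Theorem 5.2 of \citet{Bloch1996}, and your reduced variational-principle proof is essentially the standard one given in that reference, so there is nothing further to reconcile.
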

\begin{proof}
See Theorem 5.2 of \citet{Bloch1996}.
\end{proof}
\begin{remark}
    The reasoning behind using $g^{-1}\dot{g}$ rather than just $\dot{g}$ is because $\dot{g}\in T_g\Gcal$ and $g^{-1}:T_g\Gcal\to T_e\Gcal=\mathfrak{g}$ and therefore $g^{-1}\dot{g}\in\mathfrak{g}$.
\end{remark}
In coordinates, let $\xi = \sum_i \xi^ie_i\in\mathfrak{g}$ where $\{e_i\}$ forms a basis. Then equation \eqref{eq:eulerPoincare} takes the form
\begin{equation}
    \frac{d}{dt} \frac{\partial \mathcal{L}}{\partial\xi^j} = c_{ij}^k \frac{\partial\mathcal{L}}{\partial\xi^k}\xi^i, \quad [e_i,e_j] = \sum_{k} \, c_{ij}^k e_k.
\end{equation}
The idea of a geodesic is useful because it provides a map $T_xM\to M$ in the following way:
\begin{equation*}
    v \mapsto \gamma(1),\quad \gamma(0)=x,\quad \dot{\gamma}(0)=v
\end{equation*}
where $\gamma$ is a geodesic. This map is called the Riemannian exponential map and is denoted by $\mathrm{Exp}_x:T_xM\to M$.
\begin{remark}
    There are two important aspects of the Riemannian exponential that need to be  discussed. (1) In general, $\mathrm{Exp}_x$ is only defined on a neighborhood of $0\in T_xM$ and not the whole tangent space. (2) When $M=\mathcal{G}$ is a Lie group, the Lie exponential map $\exp:\mathfrak{g}\to\mathcal{G}$ will usually not agree with the Riemannian exponential although they have matching domain and codomain.
\end{remark}
The last tool from Riemannian geometry we will use is the Hessian. This is a generalization of the second derivative. For a function $f:M\to\mathbb{R}$, the Hessian is a second-order tensor given by
\begin{equation*}
    H(f) = \sum_{ij} \, \left(
    \frac{\partial^2 f}{\partial x^i\partial x^j} - \sum_{k} \, \Gamma^k_{ij} \frac{\partial f}{\partial x^k} \right) dx^i \otimes dx^j.
\end{equation*}
A different, equivalent, definition of the Hessian is:
\begin{equation*}
    H(f)(X,Y) = \langle \nabla_X \mathrm{grad}(f),Y\rangle = X(Yf)-df(\nabla_X Y).
\end{equation*}
This object can be used to calculate the covariance as well as be an ingredient in Newton's root finding method.

\bibliography{strings-full,ieee-full,references}

\begin{thebibliography}{73}
\providecommand{\natexlab}[1]{#1}
\providecommand{\url}[1]{\texttt{#1}}
\expandafter\ifx\csname urlstyle\endcsname\relax
  \providecommand{\doi}[1]{doi: #1}\else
  \providecommand{\doi}{doi: \begingroup \urlstyle{rm}\Url}\fi

\bibitem[Barfoot(2017)]{barfoot2017state}
Timothy~D Barfoot.
\newblock \emph{State Estimation for Robotics}.
\newblock Cambridge University Press, 2017.

\bibitem[Bekka et~al.(2008)Bekka, de~La~Harpe, and Valette]{bekka2008kazhdan}
Bachir Bekka, Pierre de~La~Harpe, and Alain Valette.
\newblock \emph{Kazhdan's property (T)}.
\newblock Cambridge university press, 2008.

\bibitem[Bengio et~al.(2013)Bengio, Courville, and
  Vincent]{bengio2013representation}
Yoshua Bengio, Aaron Courville, and Pascal Vincent.
\newblock Representation learning: A review and new perspectives.
\newblock \emph{{IEEE} Transactions on Pattern Analysis and Machine
  Intelligence}, 35\penalty0 (8):\penalty0 1798--1828, 2013.

\bibitem[Bentoutou et~al.(2005)Bentoutou, Taleb, Kpalma, and
  Ronsin]{bentoutou2005automatic}
Youcef Bentoutou, Nasreddine Taleb, Kidiyo Kpalma, and Joseph Ronsin.
\newblock An automatic image registration for applications in remote sensing.
\newblock \emph{IEEE Transactions on Geoscience and Remote Sensing},
  43\penalty0 (9):\penalty0 2127--2137, 2005.

\bibitem[Berlinet and Thomas-Agnan(2004)]{berlinet2004reproducing}
Alain Berlinet and Christine Thomas-Agnan.
\newblock \emph{Reproducing kernel {Hilbert} spaces in probability and
  statistics}.
\newblock Kluwer Academic, 2004.

\bibitem[Berndt and Klucznik(2001)]{berndt2001introduction}
Rolf Berndt and Michael Klucznik.
\newblock \emph{An Introduction to Symplectic Geometry}.
\newblock Graduate Studies in Mathematics. American Mathematical Society, 2001.
\newblock ISBN 9780821820568.

\bibitem[Bishop(2006)]{bishop2006pattern}
Christopher~M Bishop.
\newblock \emph{Pattern recognition and machine learning}.
\newblock Springer, 2006.

\bibitem[Bloch et~al.(1996)Bloch, Krishnaprasad, Marsden, and Ratiu]{Bloch1996}
Anthony Bloch, P.~S. Krishnaprasad, Jerrold~E. Marsden, and Tudor~S. Ratiu.
\newblock The {Euler-Poincar{\'e}} equations and double bracket dissipation.
\newblock \emph{Communications in Mathematical Physics}, 175\penalty0
  (1):\penalty0 1--42, Jan 1996.

\bibitem[Bloch et~al.(1992)Bloch, Brockett, and Ratiu]{bloch1992}
Anthony~M. Bloch, Roger~W. Brockett, and Tudor~S. Ratiu.
\newblock Completely integrable gradient flows.
\newblock \emph{Communications in Mathematical Physics}, 147\penalty0
  (1):\penalty0 57--74, Jun 1992.

\bibitem[Bonnabel et~al.(2016)Bonnabel, Barczyk, and
  Goulette]{bonnabel2016covariance}
Silvere Bonnabel, Martin Barczyk, and Fran{\c{c}}ois Goulette.
\newblock On the covariance of icp-based scan-matching techniques.
\newblock In \emph{Proceedings of the American Control Conference}, pages
  5498--5503. IEEE, 2016.

\bibitem[Borovitskiy et~al.(2020)Borovitskiy, Terenin, Mostowsky, and
  Deisenroth]{borovitskiy2020mat}
Viacheslav Borovitskiy, Alexander Terenin, Peter Mostowsky, and Marc~Peter
  Deisenroth.
\newblock Mat\'ern gaussian processes on riemannian manifolds.
\newblock \emph{arXiv preprint arXiv:2006.10160}, 2020.

\bibitem[Bradski(2000)]{opencv_library}
Gary Bradski.
\newblock {The OpenCV Library}.
\newblock \emph{Dr. Dobb's Journal of Software Tools}, 2000.

\bibitem[Brossard et~al.(2020)Brossard, Bonnabel, and Barrau]{brossard2020new}
Martin Brossard, Silvere Bonnabel, and Axel Barrau.
\newblock A new approach to 3d icp covariance estimation.
\newblock \emph{IEEE Robotics and Automation Letters}, 5\penalty0 (2):\penalty0
  744--751, 2020.

\bibitem[Canny(1987)]{canny1987computational}
John Canny.
\newblock A computational approach to edge detection.
\newblock In \emph{Readings in Computer Vision}, pages 184--203. Elsevier,
  1987.

\bibitem[Censi(2007)]{censi2007accurate}
Andrea Censi.
\newblock An accurate closed-form estimate of icp's covariance.
\newblock In \emph{Proceedings of the {IEEE} International Conference on
  Robotics and Automation}, pages 3167--3172. IEEE, 2007.

\bibitem[Censi(2008)]{censi2008icl}
Andrea Censi.
\newblock An {ICP} variant using a point-to-line metric.
\newblock In \emph{Proceedings of the {IEEE} International Conference on
  Robotics and Automation}, pages 19--25. IEEE, 2008.

\bibitem[Chapelle et~al.(2006)Chapelle, Sch{\"o}lkopf, and
  Zien]{chapelle2006semi}
Olivier Chapelle, Bernhard Sch{\"o}lkopf, and Alexander Zien.
\newblock \emph{Semi-supervised Learning}.
\newblock Adaptive computation and machine learning. MIT Press, 2006.
\newblock ISBN 9780262033589.
\newblock URL \url{https://books.google.com/books?id=kfqvQgAACAAJ}.

\bibitem[Chen et~al.(2017)Chen, Papandreou, Kokkinos, Murphy, and
  Yuille]{chen2017deeplab}
Liang-Chieh Chen, George Papandreou, Iasonas Kokkinos, Kevin Murphy, and Alan~L
  Yuille.
\newblock Deeplab: Semantic image segmentation with deep convolutional nets,
  atrous convolution, and fully connected crfs.
\newblock \emph{{IEEE} Transactions on Pattern Analysis and Machine
  Intelligence}, 40\penalty0 (4):\penalty0 834--848, 2017.

\bibitem[Chen et~al.(2020)Chen, L{\"a}be, Milioto, R{\"o}hling, Vysotska, Haag,
  Behley, Stachniss, and Fraunhofer]{chen2020overlapnet}
Xieyuanli Chen, Thomas L{\"a}be, Andres Milioto, Timo R{\"o}hling, Olga
  Vysotska, Alexandre Haag, Jens Behley, Cyrill Stachniss, and FKIE Fraunhofer.
\newblock {OverlapNet}: Loop closing for {LiDAR}-based {SLAM}.
\newblock In \emph{Proceedings of the Robotics: Science and Systems
  Conference}, 2020.

\bibitem[Chen and Medioni(1991)]{chen1991plane}
Yang Chen and G{\'e}rard Medioni.
\newblock Object modeling by registration of multiple range images.
\newblock In \emph{Proceedings of the {IEEE} International Conference on
  Robotics and Automation}, pages 2724--2729. IEEE, 1991.

\bibitem[Dahlkamp et~al.(2006)Dahlkamp, Kaehler, Stavens, Thrun, and
  Bradski]{Dahlkamp-RSS-06}
Hendrik Dahlkamp, Adrian Kaehler, David Stavens, Sebastian Thrun, and Gary~R.
  Bradski.
\newblock Self-supervised monocular road detection in desert terrain.
\newblock In \emph{Proceedings of the Robotics: Science and Systems
  Conference}, Philadelphia, USA, August 2006.
\newblock \doi{10.15607/RSS.2006.II.005}.

\bibitem[Doersch and Zisserman(2017)]{doersch2017multi}
Carl Doersch and Andrew Zisserman.
\newblock Multi-task self-supervised visual learning.
\newblock In \emph{Proceedings of the {IEEE} International Conference on
  Computer Vision}, pages 2051--2060, 2017.

\bibitem[Engel et~al.(2014)Engel, Sch{\"o}ps, and Cremers]{engel2014lsd}
Jakob Engel, Thomas Sch{\"o}ps, and Daniel Cremers.
\newblock {LSD-SLAM}: Large-scale direct monocular {SLAM}.
\newblock In \emph{European Conference on Computer Vision}, pages 834--849.
  Springer, 2014.

\bibitem[Engel et~al.(2015)Engel, St{\"u}ckler, and Cremers]{engel2015lsdslam}
Jakob Engel, J{\"o}rg St{\"u}ckler, and Daniel Cremers.
\newblock Large-scale direct {SLAM} with stereo cameras.
\newblock In \emph{Proceedings of the {IEEE}/{RSJ} International Conference on
  Intelligent Robots and Systems}, pages 1935--1942. IEEE, 2015.

\bibitem[Engel et~al.(2018)Engel, Koltun, and Cremers]{engel2018direct}
Jakob Engel, Vladlen Koltun, and Daniel Cremers.
\newblock Direct sparse odometry.
\newblock \emph{IEEE transactions on pattern analysis and machine
  intelligence}, 40\penalty0 (3):\penalty0 611--625, 2018.

\bibitem[Feragen et~al.(2015)Feragen, Lauze, and Hauberg]{feragen2015geodesic}
Aasa Feragen, Francois Lauze, and Soren Hauberg.
\newblock Geodesic exponential kernels: When curvature and linearity conflict.
\newblock In \emph{Proceedings of the {IEEE} Conference on Computer Vision and
  Pattern Recognition}, pages 3032--3042, 2015.

\bibitem[Fraundorfer and Scaramuzza(2012)]{fraundorfer2012visual}
Friedrich Fraundorfer and Davide Scaramuzza.
\newblock Visual odometry: Part ii: Matching, robustness, optimization, and
  applications.
\newblock \emph{IEEE Robotics \& Automation Magazine}, 19\penalty0
  (2):\penalty0 78--90, 2012.

\bibitem[Ghaffari et~al.(2019)Ghaffari, Clark, Bloch, Eustice, and
  Grizzle]{MGhaffari-RSS-19}
Maani Ghaffari, William Clark, Anthony Bloch, Ryan~M. Eustice, and Jessy~W.
  Grizzle.
\newblock Continuous direct sparse visual odometry from {RGB-D} images.
\newblock In \emph{Proceedings of the Robotics: Science and Systems
  Conference}, Freiburg, Germany, June 2019.

\bibitem[Girshick et~al.(2014)Girshick, Donahue, Darrell, and
  Malik]{Girshick_2014_CVPR}
Ross Girshick, Jeff Donahue, Trevor Darrell, and Jitendra Malik.
\newblock Rich feature hierarchies for accurate object detection and semantic
  segmentation.
\newblock In \emph{Proceedings of the {IEEE} Conference on Computer Vision and
  Pattern Recognition}, June 2014.

\bibitem[Goodfellow et~al.(2016)Goodfellow, Bengio, and
  Courville]{Goodfellow-et-al-2016}
Ian Goodfellow, Yoshua Bengio, and Aaron Courville.
\newblock \emph{Deep Learning}.
\newblock MIT Press, 2016.
\newblock \url{http://www.deeplearningbook.org}.

\bibitem[Hartley and Zisserman(2003)]{hartley2003multiple}
Richard Hartley and Andrew Zisserman.
\newblock \emph{Multiple view geometry in computer vision}.
\newblock Cambridge university press, 2003.

\bibitem[{Intel Corporation}(2019)]{intel2019tbb}
{Intel Corporation}.
\newblock {Official Threading Building Blocks (TBB) GitHub repository}.
\newblock \url{https://github.com/intel/tbb}, 2019.

\bibitem[Ivancevic and Ivancevic(2011)]{1104.1106}
Vladimir~G. Ivancevic and Tijana~T. Ivancevic.
\newblock Lecture notes in {Lie} groups, 2011.

\bibitem[Jayasumana et~al.(2015)Jayasumana, Hartley, Salzmann, Li, and
  Harandi]{jayasumana2015kernel}
Sadeep Jayasumana, Richard Hartley, Mathieu Salzmann, Hongdong Li, and Mehrtash
  Harandi.
\newblock Kernel methods on riemannian manifolds with gaussian rbf kernels.
\newblock \emph{{IEEE} Transactions on Pattern Analysis and Machine
  Intelligence}, 37\penalty0 (12):\penalty0 2464--2477, 2015.

\bibitem[Kerl(2013)]{Kerl2013DVOrepo}
Christian Kerl.
\newblock {Dense Visual Odometry (dvo)}.
\newblock \url{https://github.com/tum-vision/dvo}, 2013.

\bibitem[Kerl et~al.(2013{\natexlab{a}})Kerl, Sturm, and
  Cremers]{kerl2013dense}
Christian Kerl, J{\"u}rgen Sturm, and Daniel Cremers.
\newblock Dense visual {SLAM} for {RGB-D} cameras.
\newblock In \emph{Proceedings of the {IEEE}/{RSJ} International Conference on
  Intelligent Robots and Systems}, pages 2100--2106. IEEE, 2013{\natexlab{a}}.

\bibitem[Kerl et~al.(2013{\natexlab{b}})Kerl, Sturm, and
  Cremers]{kerl2013robust}
Christian Kerl, J{\"u}rgen Sturm, and Daniel Cremers.
\newblock Robust odometry estimation for {RGB-D} cameras.
\newblock In \emph{Proceedings of the {IEEE} International Conference on
  Robotics and Automation}, pages 3748--3754. IEEE, 2013{\natexlab{b}}.

\bibitem[Kim and Im(2003)]{kim2003automatic}
Taejung Kim and Yong-Jo Im.
\newblock Automatic satellite image registration by combination of matching and
  random sample consensus.
\newblock \emph{IEEE Transactions on Geoscience and Remote Sensing},
  41\penalty0 (5):\penalty0 1111--1117, 2003.

\bibitem[Landry et~al.(2019)Landry, Pomerleau, and Giguere]{landry2019cello}
David Landry, Fran{\c{c}}ois Pomerleau, and Philippe Giguere.
\newblock Cello-3d: Estimating the covariance of icp in the real world.
\newblock In \emph{Proceedings of the {IEEE} International Conference on
  Robotics and Automation}, pages 8190--8196. IEEE, 2019.

\bibitem[LeCun et~al.(2015)LeCun, Bengio, and Hinton]{lecun2015deep}
Yann LeCun, Yoshua Bengio, and Geoffrey Hinton.
\newblock Deep learning.
\newblock \emph{nature}, 521\penalty0 (7553):\penalty0 436--444, 2015.

\bibitem[Lin et~al.(2019)Lin, Clark, Eustice, Grizzle, Bloch, and
  Ghaffari]{lin2019adaptive}
Tzu-Yuan Lin, William Clark, Ryan~M Eustice, Jessy~W Grizzle, Anthony Bloch,
  and Maani Ghaffari.
\newblock Adaptive continuous visual odometry from rgb-d images.
\newblock \emph{arXiv preprint arXiv:1910.00713}, 2019.

\bibitem[Litjens et~al.(2017)Litjens, Kooi, Bejnordi, Setio, Ciompi,
  Ghafoorian, Van Der~Laak, Van~Ginneken, and S{\'a}nchez]{litjens2017survey}
Geert Litjens, Thijs Kooi, Babak~Ehteshami Bejnordi, Arnaud Arindra~Adiyoso
  Setio, Francesco Ciompi, Mohsen Ghafoorian, Jeroen~Awm Van Der~Laak, Bram
  Van~Ginneken, and Clara~I S{\'a}nchez.
\newblock A survey on deep learning in medical image analysis.
\newblock \emph{Medical image analysis}, 42:\penalty0 60--88, 2017.

\bibitem[Long et~al.(2015)Long, Shelhamer, and Darrell]{long2015fcn}
Jonathan Long, Evan Shelhamer, and Trevor Darrell.
\newblock Fully convolutional networks for semantic segmentation.
\newblock In \emph{Proceedings of the {IEEE} Conference on Computer Vision and
  Pattern Recognition}, pages 3431--3440, 2015.

\bibitem[Mikhail et~al.(2001)Mikhail, Bethel, and
  McGlone]{mikhail2001introduction}
Edward~M Mikhail, James~S Bethel, and J~Chris McGlone.
\newblock \emph{Introduction to modern photogrammetry}.
\newblock Wiley, 2001.

\bibitem[Murphy(2012)]{murphy2012machine}
Kevin~P Murphy.
\newblock \emph{Machine learning: a probabilistic perspective}.
\newblock The MIT Press, 2012.

\bibitem[Neal(1996)]{neal1996bayesian}
Radford~M Neal.
\newblock \emph{Bayesian learning for neural networks}, volume 118.
\newblock Springer New York, 1996.

\bibitem[Nist{\'e}r et~al.(2006)Nist{\'e}r, Naroditsky, and
  Bergen]{nister2006visual}
David Nist{\'e}r, Oleg Naroditsky, and James Bergen.
\newblock Visual odometry for ground vehicle applications.
\newblock \emph{Journal of Field Robotics}, 23\penalty0 (1):\penalty0 3--20,
  2006.

\bibitem[Park and Brockett(1994)]{park1994kinematic}
Frank~C Park and Roger~W Brockett.
\newblock Kinematic dexterity of robotic mechanisms.
\newblock \emph{International Journal of Robotics Research}, 13\penalty0
  (1):\penalty0 1--15, 1994.

\bibitem[Parkison et~al.(2018)Parkison, Gan, Ghaffari~Jadidi, and
  Eustice]{sparkison-2018a}
Steven~A Parkison, Lu~Gan, Maani Ghaffari~Jadidi, and Ryan~M. Eustice.
\newblock Semantic iterative closest point through expectation-maximization.
\newblock In \emph{Proceedings of the British Machine Vision Conference}, pages
  1--17, Newcastle, UK, September 2018.

\bibitem[Parkison et~al.(2019)Parkison, Ghaffari, Gan, Zhang, Ushani, and
  Eustice]{parkison2019boosting}
Steven~A Parkison, Maani Ghaffari, Lu~Gan, Ray Zhang, Arash~K Ushani, and
  Ryan~M Eustice.
\newblock Boosting shape registration algorithms via reproducing kernel
  {Hilbert} space regularizers.
\newblock \emph{IEEE Robotics and Automation Letters}, 4\penalty0 (4):\penalty0
  4563--4570, 2019.

\bibitem[Pathak et~al.(2017)Pathak, Agrawal, Efros, and
  Darrell]{Pathak_2017_CVPR_Workshops}
Deepak Pathak, Pulkit Agrawal, Alexei~A. Efros, and Trevor Darrell.
\newblock Curiosity-driven exploration by self-supervised prediction.
\newblock In \emph{The IEEE Conference on Computer Vision and Pattern
  Recognition (CVPR) Workshops}, July 2017.

\bibitem[Pizenberg(2019)]{MatthieuP2019DVO}
Matthieu Pizenberg.
\newblock {DVO} core (without {ROS} dependency).
\newblock
  \url{https://github.com/mpizenberg/dvo/tree/76f65f0c9b438675997f595471d39863901556a9},
  2019.

\bibitem[Rohan(2013)]{rohan2013}
Ramona-Andreea Rohan.
\newblock Some remarks on the exponential map on the groups {SO}(n) and
  {SE}(n).
\newblock In \emph{Proceedings of the Fourteenth International Conference on
  Geometry, Integrability and Quantization}, pages 160--175, Sofia, Bulgaria,
  2013. Avangard Prima.

\bibitem[Rosinol et~al.(2019)Rosinol, Abate, Chang, and
  Carlone]{rosinol2019kimera}
Antoni Rosinol, Marcus Abate, Yun Chang, and Luca Carlone.
\newblock Kimera: an open-source library for real-time metric-semantic
  localization and mapping.
\newblock \emph{arXiv preprint arXiv:1910.02490}, 2019.

\bibitem[Scaramuzza and Fraundorfer(2011)]{scaramuzza2011visual}
Davide Scaramuzza and Friedrich Fraundorfer.
\newblock Visual odometry.
\newblock \emph{IEEE robotics \& automation magazine}, 18\penalty0
  (4):\penalty0 80--92, 2011.

\bibitem[Sch{\"o}lkopf et~al.(2001)Sch{\"o}lkopf, Herbrich, and
  Smola]{scholkopf2001generalized}
Bernhard Sch{\"o}lkopf, Ralf Herbrich, and Alex Smola.
\newblock A generalized representer theorem.
\newblock In \emph{Computational learning theory}, pages 416--426. Springer,
  2001.

\bibitem[Segal et~al.(2009)Segal, Haehnel, and Thrun]{Segal-RSS-09}
Aleksandr Segal, Dirk Haehnel, and Sebastian Thrun.
\newblock {Generalized-ICP}.
\newblock In \emph{Proceedings of the Robotics: Science and Systems
  Conference}, Seattle, USA, June 2009.
\newblock \doi{10.15607/RSS.2009.V.021}.

\bibitem[Servos and Waslander(2017)]{servos2017multi}
James Servos and Steven~L Waslander.
\newblock {Multi-Channel {Generalized-ICP}}: A robust framework for
  multi-channel scan registration.
\newblock \emph{Robotics and Autonomous Systems}, 87:\penalty0 247--257, 2017.

\bibitem[Sofman et~al.(2006)Sofman, Lin, Bagnell, Cole, Vandapel, and
  Stentz]{sofman2006improving}
Boris Sofman, Ellie Lin, J~Andrew Bagnell, John Cole, Nicolas Vandapel, and
  Anthony Stentz.
\newblock Improving robot navigation through self-supervised online learning.
\newblock \emph{Journal of Field Robotics}, 23\penalty0 (11-12):\penalty0
  1059--1075, 2006.

\bibitem[Stoyanov et~al.(2012)Stoyanov, Magnusson, Andreasson, and
  Lilienthal]{stoyanov2012fast}
Todor Stoyanov, Martin Magnusson, Henrik Andreasson, and Achim~J Lilienthal.
\newblock Fast and accurate scan registration through minimization of the
  distance between compact {3D} {NDT} representations.
\newblock \emph{International Journal of Robotics Research}, 31\penalty0
  (12):\penalty0 1377--1393, 2012.

\bibitem[Strasdat(2012)]{strasdat2012local}
Hauke Strasdat.
\newblock \emph{Local accuracy and global consistency for efficient visual
  {SLAM}}.
\newblock PhD thesis, Imperial College London, 2012.

\bibitem[Sturm et~al.(2012)Sturm, Engelhard, Endres, Burgard, and
  Cremers]{sturm2012benchmark}
J{\"u}rgen Sturm, Nikolas Engelhard, Felix Endres, Wolfram Burgard, and Daniel
  Cremers.
\newblock A benchmark for the evaluation of {RGB-D} {SLAM} systems.
\newblock In \emph{Proceedings of the {IEEE}/{RSJ} International Conference on
  Intelligent Robots and Systems}, pages 573--580. IEEE, 2012.

\bibitem[Szeliski(2010)]{szeliski2010computer}
Richard Szeliski.
\newblock \emph{Computer vision: algorithms and applications}.
\newblock Springer Science \& Business Media, 2010.

\bibitem[Thrun et~al.(2005)Thrun, Burgard, and Fox]{thrun2005probabilistic}
Sebastian Thrun, Wolfram Burgard, and Dieter Fox.
\newblock \emph{Probabilistic robotics}.
\newblock MIT press, 2005.

\bibitem[Tsin and Kanade(2004)]{tsin2004correlation}
Yanghai Tsin and Takeo Kanade.
\newblock A correlation-based approach to robust point set registration.
\newblock In \emph{Proceedings of the European Conference on Computer Vision},
  pages 558--569. Springer, 2004.

\bibitem[Tu(2011)]{tu2011introduction}
Loring~W. Tu.
\newblock \emph{An introduction to manifolds}.
\newblock New York, US: Springer, 2011.

\bibitem[Tu(2017)]{diffGeometry}
Loring~W. Tu.
\newblock \emph{Differential Geometry}.
\newblock Graduate Texts in Mathematics. Springer, 2017.
\newblock ISBN 9783319660824.

\bibitem[Williams and Rasmussen(2006)]{rasmussen2006gaussian}
Christopher~K. Williams and Carl~Edward Rasmussen.
\newblock \emph{{G}aussian processes for machine learning}, volume~1.
\newblock {MIT} press, 2006.

\bibitem[Zaganidis et~al.(2018)Zaganidis, Sun, Duckett, and
  Cielniak]{Zaganidis2018}
Anestis Zaganidis, Li~Sun, Tom Duckett, and Grzegorz Cielniak.
\newblock Integrating deep semantic segmentation into 3-d point cloud
  registration.
\newblock \emph{{IEEE} Robotics and Automation Letters}, 3\penalty0
  (4):\penalty0 2942--2949, October 2018.

\bibitem[Zhang et~al.(2020)Zhang, Lin, Lin, Parkison, Clark, Grizzle, Eustice,
  and Ghaffari]{zhang2020new}
Ray Zhang, Tzu-Yuan Lin, Chien~Erh Lin, Steven~A Parkison, William Clark,
  Jessy~W Grizzle, Ryan~M Eustice, and Maani Ghaffari.
\newblock A new framework for registration of semantic point clouds from stereo
  and {RGB-D} cameras.
\newblock In \emph{Proceedings of the {IEEE} International Conference on
  Robotics and Automation}, 2020.

\bibitem[Zhu et~al.(2020)Zhu, Ghaffari, Zhong, Lu, Cao, Eustice, and
  Peng]{zhu2020monocular}
Minghan Zhu, Maani Ghaffari, Yuanxin Zhong, Pingping Lu, Zhong Cao, Ryan~M
  Eustice, and Huei Peng.
\newblock Monocular depth prediction through continuous {3D} loss.
\newblock In \emph{Proceedings of the {IEEE}/{RSJ} International Conference on
  Intelligent Robots and Systems}, 2020.

\bibitem[Zhu et~al.(2021)Zhu, Ghaffari, and
  Peng]{anonymous2021correspondencefree}
Minghan Zhu, Maani Ghaffari, and Huei Peng.
\newblock Correspondence-free point cloud registration with {SO}(3)-equivariant
  implicit shape representations.
\newblock In \emph{Proceedings of the Conference on Robot Learning}, 2021.
\newblock URL \url{https://openreview.net/forum?id=KOq9qDgn-Ta}.

\bibitem[Zhu et~al.(2019)Zhu, Sapra, Reda, Shih, Newsam, Tao, and
  Catanzaro]{zhu2019improving}
Yi~Zhu, Karan Sapra, Fitsum~A Reda, Kevin~J Shih, Shawn Newsam, Andrew Tao, and
  Bryan Catanzaro.
\newblock Improving semantic segmentation via video propagation and label
  relaxation.
\newblock In \emph{Proceedings of the {IEEE} Conference on Computer Vision and
  Pattern Recognition}, pages 8856--8865, 2019.

\end{thebibliography}

\end{document}